\documentclass[11pt]{article}
\usepackage[a4paper,margin=1in]{geometry}
\usepackage{amsmath,amssymb,amsthm,mathtools}
\usepackage{graphicx}
\usepackage{booktabs}
\usepackage{hyperref}
\usepackage{enumitem}
\usepackage{microtype}
\usepackage{caption}
\usepackage{subcaption}
\usepackage{float}
\usepackage{xcolor}
\usepackage{array}
\hypersetup{
  colorlinks=true,
  linkcolor=blue!50!black,
  citecolor=blue!50!black,
  urlcolor=blue!50!black,
  pdftitle={From a Scalar Parabolic Oscillator to Topological Thermostats: Selective Feedback Control of Harmonic Flow Modes},
  pdfauthor={Sandro Merino},
  pdfsubject={Hodge-theoretic feedback control of harmonic flow modes},
  pdfkeywords={topological thermostat, Hodge decomposition, harmonic cycle flows, finite-channel feedback, swing equations, sensor-actuator placement, power-network loop flows}
}

\theoremstyle{plain}
\newtheorem{theorem}{Theorem}[section]

\newtheorem{lemma}[theorem]{Lemma}
\newtheorem{corollary}[theorem]{Corollary}
\theoremstyle{definition}
\newtheorem{definition}[theorem]{Definition}
\newtheorem{remark}[theorem]{Remark}
\newtheorem{example}[theorem]{Example}

\newcommand{\R}{\mathbb{R}}
\newcommand{\im}{\operatorname{im}}
\newcommand{\rank}{\operatorname{rank}}
\newcommand{\Ker}{\operatorname{ker}}
\newcommand{\cH}{\mathcal{H}}
\newcommand{\cC}{\mathcal{C}}
\newcommand{\dd}{\mathrm{d}}
\newcommand{\T}{\mathbb{T}}
\newcommand{\Z}{\mathbb{Z}}
\newcommand{\norm}[1]{\left\lVert #1\right\rVert}
\newcommand{\Lin}{\mathcal{L}}

\title{From a Scalar Parabolic Oscillator to Topological Thermostats\\[0.3em]
\large Selective Feedback Control of Harmonic Flow Modes}
\author{Sandro Merino\\[0.4em]
\small Basler Kantonalbank\\
\small Correspondence: \href{mailto:smerino@bluewin.ch}{\texttt{smerino@bluewin.ch}}\\
\small ORCID: \href{https://orcid.org/0000-0003-3980-8819}{\texttt{0000-0003-3980-8819}}}
\date{}

\begin{document}
\maketitle

\begin{abstract}
This paper develops a Hodge-theoretic feedback framework for flow-valued states, motivated by scalar parabolic thermostat problems with localized sensors and dual sources.  On a connected graph, the edge space decomposes into a cut space and a harmonic cycle space.  The ideal topological thermostat regulates the cut component toward a prescribed cycle-free transfer target and damps the harmonic component.  For realizations through selected physical edge channels, the first Betti number gives the minimal sensing and actuation ranks required for exact compression of the full harmonic sector.  Minimal rank does not imply local realization of the nonlocal Hodge projector; the remaining cut--harmonic blocks quantify spillover and forcing.

For transmission networks, we formulate ideal and finite-channel nonlinear line-actuated swing closures and linearize the complete closed systems at an angle-cohesive synchronous equilibrium.  With the actuator fixed, passive tangent swing motion preserves any pre-existing harmonic line-flow component.  The ideal closure replaces this conservation law by exponential decay and appends a stable harmonic block without changing the reduced passive nodal--cut spectrum.  Under the channel-rank conditions, finite realizations reproduce the harmonic compression, whereas autonomous harmonic decay and full reduced-state recovery require additional decoupling and Hurwitz conditions.  A target-centred Bregman balance yields nonlinear dissipation for the ideal metric-compatible controller and for canonical colocated finite-channel feedback.  Explicit theta-network, microgrid, ring, and synthetic IEEE 14-bus examples illustrate transfer regulation, cycle-flow damping, and localization-induced transients.  Realistic device dynamics, constraints, and large-scale validation remain open.
\end{abstract}

\medskip
\noindent\textbf{Keywords:} topological thermostat; Hodge decomposition; harmonic cycle flows; finite-channel feedback; swing equations; sensor--actuator placement; power-network loop flows.

\section{Introduction}
The scalar heat equation is the canonical dissipative parabolic equation.  In a thermostat problem, a trace sensor and a localized dual source form a feedback pair; in the work of Guidotti and Merino this architecture produces nonlocal perturbations of the Laplacian and may generate oscillatory dynamics \cite{GuidottiMerino1997,GuidottiMerino2000,GuidottiMerino2020,GuidottiMerino2021}.  The present paper asks which part of that mechanism remains meaningful when the state is a flow on a graph or a differential form on a manifold.

Hodge theory supplies the organizing geometry.  Passive Hodge heat damps positive spectral modes while leaving the harmonic kernel invariant.  On a connected graph, fix the Euclidean inner product on the edge space.  Then
\begin{equation}
        \R^m=\im B^\top\oplus\Ker B
\end{equation}
is an orthogonal decomposition.  Let $P_C$ and $P_H$ denote the corresponding orthogonal projections.  Given $f_*\in\im B^\top$ and relaxation rates $\alpha,\kappa>0$, the ideal normal form
\begin{equation}
        \dot f=-\alpha P_C(f-f_*)-\kappa P_Hf
        \label{eq:introduction-topological-thermostat}
\end{equation}
regulates transfer error while damping internal circulation.  The stabilizing signs are intrinsic to the projections: $P_C$ and $P_H$ act as the identity on their respective ranges, so the two projected errors decay at rates $\alpha$ and $\kappa$.  Their matrices need not be entrywise positive.

The harmonic projector is intrinsic once the edge metric is fixed, but its rank is not its spatial support.  If sensing and actuation are restricted to physical edge coordinates, at least $b_1$ independent sensor and actuator channels are required for the full harmonic sector.  The minimal square realization can satisfy
\begin{equation}
        H^\top CKSH=-\kappa I_{b_1},
        \label{eq:introduction-compression}
\end{equation}
but generally not $CKS=-\kappa P_H$.  The coordinate transformation to harmonic variables is a global mixing of edge quantities and does not relocate physical devices.  Consequently, a parsimonious local realization generally creates cut-space spillover and cut-to-harmonic forcing; exact ideal action may require distributed sensing and actuation.

The transmission-grid part uses one nonlinear swing plant with two target-compatible closed-loop realizations.  A line-actuated swing system is closed either by the ideal weighted harmonic projector or by finite-channel feedback.  The target flow $f_*=f(\theta_*,u_*)$ satisfies the static balance equation $p=Bf_*$ and is chosen cycle-free.  Each complete closed vector field is linearized at the same angle-cohesive synchronous target.  The derivative of the constitutive line-flow map enters both tangent systems through the line-flow deviation output
\begin{equation}
        r(\vartheta,w)
        =\mathrm Df(\theta_*,u_*)[\vartheta,w]
        =W_*(B^\top\vartheta+w).
        \label{eq:introduction-linearized-output}
\end{equation}
With the line actuator held fixed, $\dot r=W_*B^\top\nu$ belongs to the weighted cut space, so the passive tangent system conserves $P_H^{W_*}r$.  The ideal line controller replaces this conservation law by exponential harmonic decay without changing the reduced passive swing spectrum.  Finite channels can reproduce the harmonic compression but generally couple the cut and harmonic output sectors; exact autonomy, full-system stability, and transient control are therefore separate requirements.  In the tangent model, harmonic disturbances enter through independent edge variables, whereas the nonlinear model also admits finite winding configurations carrying harmonic flow.

At the nonlinear level, a target-centred Bregman storage is defined on the effective edge-phase variables $\eta=B^\top\theta+u$ and $\omega$.  Along every full closed-loop trajectory it satisfies the controller-independent balance
\begin{equation}
        \dot V=-\omega^\top D\omega+\bigl(f(\eta)-f_*\bigr)^\top\dot u.
        \label{eq:introduction-bregman-balance}
\end{equation}
This closes the conceptual circle to the parabolic prototype.  Hodge geometry selects the regulated flow component; a dual or colocated sensor--actuator pairing turns negative feedback into an additional negative quadratic term.  The ideal weighted projector and the canonical colocated finite-channel controller are therefore dissipative, although the latter may still create transfer and frequency transients.

The paper proceeds from the continuous Hodge motivation to the graph normal form, finite-channel realizations, linear edge-flow examples, and nonlinear swing closures with tangent and energy analysis.  A theta network, meshed microgrids, an exactly solvable ring trajectory, and a synthetic IEEE 14-bus experiment illustrate the successive modelling levels.  Existing PST, FACTS, series-compensation, HVDC, and inverter technologies motivate line-level actuation, but the models remain idealized \cite{Sadikovic2006,ENTSOEPST,SiemensPST,HitachiTCSC,ENTSOEHVDC}.

Related work uses Hodge Laplacians in higher-order network dynamics and synchronization, while co-tree constructions have been used in loop-flow and PST-allocation problems \cite{MuhammadEgerstedtHigherOrder,ArnaudonSakaguchiKuramoto,NurissoSimplicialKuramoto,HuangPST2002}.  Weighted Hodge flows can also improve decay away from the harmonic kernel \cite{ZieglerBalancedHodge,deBadynSummers2026}.  The contribution here is not the underlying Hodge or co-tree facts themselves, but their organization into one control framework: an ideal harmonic-projector benchmark, separate sensing and actuation rank conditions, exact finite-channel compression, explicit cut--harmonic spillover blocks, and stability analysis of the complete closed system.

\section{Trace sensors, dual sources, and the parabolic prototype}
The scalar thermostat problem contains the basic sensor--actuator principle used throughout this paper.  Localized readout is a trace operation.  Let $\Omega\subset\R^n$ be a smooth bounded domain and let the Hilbert Sobolev scale be used.  For
\begin{equation}
        s>\frac n2,
\end{equation}
the Sobolev embedding
\begin{equation}
        H^s(\Omega)\hookrightarrow C^0(\overline\Omega)
\end{equation}
makes point evaluation a bounded linear trace map
\begin{equation}
        \gamma_{x_0}:H^s(\Omega)\longrightarrow\R,
        \qquad
        \gamma_{x_0}u=u(x_0).
\end{equation}
The dual map
\begin{equation}
        \gamma_{x_0}':\R\longrightarrow H^{-s}(\Omega)
\end{equation}
sends $1$ to the Dirac source at $x_0$:
\begin{equation}
        \gamma_{x_0}'1=\delta_{x_0},
        \qquad
        \langle\delta_{x_0},\varphi\rangle
        =
        \gamma_{x_0}\varphi
        =
        \varphi(x_0),
        \quad \varphi\in H^s(\Omega).
\end{equation}
Thus the trace sensor and the Dirac source are not two unrelated formal objects.  They are adjoint objects in the positive and negative Sobolev scales.  This is the trace/source duality appearing in the scalar thermostat analysis of Guidotti--Merino \cite{GuidottiMerino2000}; the surrounding functional-analytic framework is the standard interpolation and extrapolation-space theory for sectorial parabolic generators in the sense of Amann \cite{AmannI,AmannII}.

In abstract form, the scalar feedback perturbation has the rank-one structure
\begin{equation}
        \dot u=Au+b\,k\,\ell(u),
\end{equation}
where $A$ is the parabolic generator, $\ell$ is the trace sensor, and $b$ is the corresponding source in the appropriate negative Sobolev or extrapolation space.  In the colocated point case this is written schematically as
\begin{equation}
        \gamma_{x_0}'\,k\,\gamma_{x_0}u
        =
        k\,u(x_0)\delta_{x_0}.
\end{equation}
The expression is a mnemonic shorthand for the trace/source operator; it becomes mathematically rigorous only after the appropriate function spaces have been specified.  The source is placed in the dual negative space, and the parabolic evolution regularizes it for positive time.

The finite-dimensional network model is the discrete analogue of this trace/source pair.  For finite-dimensional real vector spaces $U$ and $V$ we write
\begin{equation}
        \Lin(U,V)
\end{equation}
for the space of linear maps from $U$ to $V$, and $\Lin(U):=\Lin(U,U)$.  When $U=\R^n$ and $V=\R^m$, canonical coordinates identify these maps with $m\times n$ matrices.  We use the operator notation below and repeat coordinate matrix dimensions only when they add information.

Let $X:=\R^E$ be the edge space.  For every physical edge $e\in E$, let $\delta_e\in X$ denote the canonical edge-coordinate vector,
\begin{equation}
        (\delta_e)_f
        =
        \begin{cases}
        1,&f=e,\\
        0,&f\ne e.
        \end{cases}
        \label{eq:canonical-edge-coordinate-vector}
\end{equation}
Thus $\{\delta_e:e\in E\}$ is the standard basis of $X$, and every edge flow has the expansion
\begin{equation}
        f=\sum_{e\in E}f_e\delta_e.
\end{equation}
A line-flow sensor is the coordinate functional
\begin{equation}
        s_e\in\Lin(X,\R),
        \qquad
        s_e(f)=f_e=\langle\delta_e,f\rangle_X,
\end{equation}
and its Euclidean adjoint is the coordinate injection
\begin{equation}
        s_e^*\in\Lin(\R,X),
        \qquad
        s_e^*1=\delta_e.
\end{equation}
For selected edges $e_1,\ldots,e_r$, this gives a sensor map $S\in\Lin(X,\R^r)$ and a colocated actuator map $C=S^*\in\Lin(\R^r,X)$,
\begin{equation}
        Sf=(f_{e_1},\ldots,f_{e_r})^\top,
        \qquad
        Cu=\sum_{j=1}^r u_j\delta_{e_j}.
\end{equation}
In Euclidean coordinates the adjoint is represented by the transpose, $C=S^\top$; with a weighted edge metric, the corresponding Riesz map gives the dual injection.  Hence the finite-dimensional feedback
\begin{equation}
        \dot f=Af+CKSf
\end{equation}
is the edge-space counterpart of the scalar trace/source perturbation $b\,k\,\ell$.

In the physical network interpretation, the coordinate functional $s_e(f)=f_e$ is a line-flow measurement.  Depending on the modelling level, $f_e$ may represent active power flow, line current, or a derived line-flow signal.  The dual vector $\delta_e$ is not a literal injection of power into the middle of a line.  It is the idealized edge-flow direction generated by a controllable branch device.  Such a device modifies the line-flow law on edge $e$ and, after linearization, produces an input in the corresponding edge coordinate.  Thus the colocated choice $C=S^*$ should be read as follows: measure selected line-flow coordinates and actuate the same line-flow channels.  The mathematical edge injection is therefore a Riesz-dual object in edge space and, physically, a first-order abstraction of line-flow control.

The Hodge-theoretic step is to apply this duality not to arbitrary directions, but to the harmonic sector.  If $H$ is a coordinate map with $\im H=\Ker B$, then $SH$ records how the selected sensors distinguish harmonic cycle modes, while $H^\top C$ records how the actuator directions excite them.  Section~\ref{sec:finite-edge-channel-realization} develops the corresponding full-rank conditions and compressed feedback law.

The geometric generalization is immediate.  A point sensor for a $k$-form $\omega$ on a manifold evaluates the form at a point $p$ on an oriented $k$-vector $\xi\in\Lambda^kT_pM$:
\begin{equation}
\omega\longmapsto \omega_p(\xi).
\end{equation}
The corresponding actuator is a Dirac $k$-current concentrated at $p$ and oriented by $\xi$.  Thus scalar Dirac sources become Dirac currents.  The language of currents is classical in de Rham's theory and geometric measure theory \cite{DeRham,Federer}.  This observation leads first to a continuous Hodge-theoretic motivation; the finite-dimensional network theorem is then obtained by replacing differential forms by graph edge flows.

\section{Hodge heat on manifolds}
We first recall the continuous geometric analogue that motivated the network construction.  Let $M$ be a compact oriented Riemannian manifold without boundary.  Let $\Omega^k(M)$ denote smooth $k$-forms and let
\begin{equation}
\Delta_k=\dd\dd^*+\dd^*\dd
\end{equation}
be the Hodge Laplacian.  Hodge theory gives
\begin{equation}
\cH^k(M)=\Ker\Delta_k\cong H^k_{\rm dR}(M).
\end{equation}
Classical references include Warner, de Rham, and Schwarz \cite{Warner,DeRham,SchwarzHodge}.

We recall the following standard consequence of Hodge theory and the spectral theorem.  Its role is only to fix the sign convention and to identify the finite-dimensional persistent sector that motivates the control construction below.  A brief version of the classical spectral argument is included for completeness.

\begin{theorem}[Classical Hodge heat decomposition]
Let $\omega(t)$ solve
\begin{equation}
\partial_t\omega=-\Delta_k\omega
\end{equation}
on $k$-forms.  Then the harmonic projection of $\omega(t)$ is constant in time, and all positive spectral modes decay exponentially.
\end{theorem}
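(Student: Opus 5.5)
The plan is to use the spectral theorem for $\Delta_k$ on $L^2\Omega^k(M)$. Because $M$ is compact without boundary, $\Delta_k$ is an essentially self-adjoint, nonnegative elliptic operator with compact resolvent. Hence $L^2\Omega^k(M)$ has an orthonormal basis of smooth eigenforms $\{\phi_j\}$ with $\Delta_k\phi_j=\lambda_j\phi_j$ and
\begin{equation}
0\le\lambda_1\le\lambda_2\le\cdots\to\infty .
\end{equation}
The eigenvalue $0$ has finite multiplicity, and its eigenspace is exactly $\cH^k(M)=\Ker\Delta_k$. The first step is the nonnegativity itself, obtained by integrating by parts:
\begin{equation}
\langle\Delta_k\omega,\omega\rangle=\norm{\dd\omega}^2+\norm{\dd^*\omega}^2 .
\end{equation}
This also shows that $\Delta_k\omega=0$ if and only if $\dd\omega=0$ and $\dd^*\omega=0$. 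Let $\mu>0$ denote the smallest positive eigenvalue, which exists because the spectrum is discrete.

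Next I expand the solution as $\omega(t)=\sum_j c_j(t)\phi_j$. Pairing the equation with $\phi_j$ and using self-adjointness gives
\begin{equation}
\dot c_j=-\lambda_jc_j, \qquad c_j(t)=e^{-\lambda_jt}c_j(0).
\end{equation}
Thus the solution is the semigroup $e^{-t\Delta_k}\omega(0)$, and uniqueness in $L^2$ follows from the energy estimate $\tfrac{d}{dt}\norm{\omega}^2=-2\langle\Delta_k\omega,\omega\rangle\le0$.

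For the harmonic part, let $P_{\cH}$ be the orthogonal projection onto $\cH^k(M)$. Then
\begin{equation}
\tfrac{d}{dt}P_{\cH}\omega=-P_{\cH}\Delta_k\omega=-\Delta_kP_{\cH}\omega=0,
\end{equation}
since $P_{\cH}$ commutes with $\Delta_k$. Equivalently, $\langle\Delta_k\omega,h\rangle=\langle\omega,\Delta_kh\rangle=0$ for every harmonic $h$. So the harmonic projection is constant in time.

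For the complementary part, $\omega-P_{\cH}\omega$ lies in the closure of $\im\dd\oplus\im\dd^*$. Its coefficients are those with $\lambda_j\ge\mu$, and therefore
\begin{equation}
\norm{\omega(t)-P_{\cH}\omega(0)}\le e^{-\mu t}\norm{\omega(0)-P_{\cH}\omega(0)} .
\end{equation}
Each individual mode decays at its own rate $\lambda_j$.

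The main obstacle is justifying the spectral input: discreteness of the spectrum, a positive spectral gap, and smoothness of the eigenforms. This rests on elliptic regularity and Rellich compactness, which I will cite from the classical references rather than reprove. The paper frames the result as a recalled standard fact, so this is appropriate.
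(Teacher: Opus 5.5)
Your proposal is correct and follows essentially the same spectral argument as the paper: you expand in an eigenbasis (equivalently, the paper's representation $e^{-t\Delta_k}=P_{\cH}+\sum_{\lambda_j>0}e^{-\lambda_jt}P_j$), read off that the harmonic projection is constant, and bound the complementary part by $e^{-\mu t}$ with $\mu$ the first positive eigenvalue. Like you, the paper cites the classical references for the discrete spectrum and the spectral gap rather than reproving them.
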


\begin{proof}[Proof sketch]
The Hodge Laplacian $\Delta_k$ is a nonnegative self-adjoint elliptic operator with compact resolvent.  Hence its $L^2$-spectrum consists of the eigenvalue $0$, whose eigenspace is $\cH^k(M)=\Ker\Delta_k$, and a sequence of positive eigenvalues.  If $P_{\cH}$ denotes the orthogonal projection onto $\cH^k(M)$, the spectral representation of the heat semigroup gives
\begin{equation}
        e^{-t\Delta_k}
        =P_{\cH}+\sum_{\lambda_j>0}e^{-\lambda_jt}P_j,
\end{equation}
where $P_j$ is the spectral projection associated with $\lambda_j$.  Consequently,
\begin{equation}
        P_{\cH}\omega(t)=P_{\cH}\omega(0),
\end{equation}
whereas every positive spectral component is multiplied by $e^{-\lambda_jt}$.  In particular, if $\lambda_1>0$ is the first positive eigenvalue, then
\begin{equation}
        \|(I-P_{\cH})\omega(t)\|_{L^2}
        \leq
        e^{-\lambda_1t}\|(I-P_{\cH})\omega(0)\|_{L^2}.
\end{equation}
This is the standard spectral proof; see, for example, \cite{Warner,DeRham,SchwarzHodge}.
\end{proof}

For scalar heat, $k=0$.  On a connected compact manifold, the harmonic zero-forms are constants, and the harmonic projection is the average value.  Thus average temperature is the degree-zero version of a harmonic projection.  In higher degree, the persistent component is no longer a scalar average but a cohomological class represented by a harmonic form.  The preceding classical result is recorded because its kernel-versus-positive-spectrum structure is the continuous prototype for the finite-dimensional thermostat construction below.

\begin{remark}[Analytic setting on manifolds and bundles]
The preceding account is deliberately condensed.  The analytic theory underlying Hodge Laplacians and Hodge heat equations on differential forms is classical and has been developed and refined rigorously within elliptic and parabolic frameworks.  The short spectral argument above records only the consequence needed for the present construction.

A Hodge heat equation on differential forms is a parabolic equation on sections of the exterior bundle $\Lambda^kT^*M$.  Amann's theory of parabolic equations on Riemannian manifolds and tensor bundles provides a broad functional-analytic setting for such problems, including Sobolev--Slobodeckii and H{\"o}lder scales on uniformly regular Riemannian manifolds and maximal-regularity results for parabolic equations on sections of tensor bundles \cite{AmannParabolicManifolds,AmannCauchyManifolds}.  On compact manifolds this framework is compatible with the familiar elliptic--parabolic theory.  Its relevance here is conceptual: it places scalar heat equations, tensor-field equations, and equations for differential forms within a common functional-analytic setting.
\end{remark}

\section{Finite-rank feedback on harmonic forms}
The preceding preservation result isolates the harmonic sector as a finite-dimensional state space.  Let $\cH^k(M)$ have dimension $r$ and choose an orthonormal harmonic basis $h_1,\dots,h_r$.  Let $H:\R^r\to\cH^k(M)$ denote the coordinate map, and let $H^*$ be its Hilbert-space adjoint.

Let $S$ be a finite family of sensors on $k$-forms and $C$ a finite family of actuators, possibly distributional currents.  Consider the formal perturbed Hodge heat equation
\begin{equation}
\partial_t\omega=-\Delta_k\omega+CKS\omega.
\end{equation}
Write
\begin{equation}
        \omega=Hc+\omega_\perp,
        \qquad
        H^*\omega_\perp=0.
\end{equation}
Since $H^*\Delta_k=0$ on smooth forms, projection to harmonic coordinates gives the complete equation
\begin{equation}
        \dot c
        =
        H^*CKSHc
        +
        H^*CKS\omega_\perp.
        \label{eq:continuous-harmonic-coordinate-equation}
\end{equation}
The matrix
\begin{equation}
        \Gamma_{\cH^k}(CKS):=H^*CKSH
\end{equation}
is the harmonic compression of the feedback operator.  It governs the autonomous harmonic-coordinate dynamics when the state is restricted to $\cH^k(M)$, or more generally when the coupling term $H^*CKS\omega_\perp$ vanishes.  Thus the finite-dimensional rank principle is an exact statement about harmonic compression: if the sensors separate and the actuators excite the harmonic sector with full rank, then a prescribed matrix can be realized as $\Gamma_{\cH^k}(CKS)$.  The full perturbed Hodge heat equation may still couple non-harmonic and harmonic components unless an additional structural condition removes the second term in \eqref{eq:continuous-harmonic-coordinate-equation}.

The analytic treatment of point or current actuators requires appropriate Sobolev or extrapolation spaces.  This is precisely where the scalar semigroup viewpoint is useful.  Dirac sources are legitimate mathematical objects when interpreted in the correct negative Sobolev or extrapolation spaces.  For example, a point source dual to a trace belongs to a space of negative order, and boundary point sources are treated through the corresponding boundary trace and dual boundary Sobolev scale.  The parabolic evolution then regularizes such data for positive time.  A complete analytic theory of current feedback on manifolds is beyond the scope of this draft; here it is presented as the continuous motivation for the finite-dimensional network theorem below.

\section{Forced topological ringing on the flat torus}
This example is included because it is the most geometrically intuitive continuous model of the mechanism.  The flat torus has two independent one-dimensional cycles, hence a two-dimensional harmonic one-form space.  In this geometry the complete mechanism can be written explicitly, without any numerical discretization.

Let
\begin{equation}
\T^2=\R^2/\Z^2
\end{equation}
be the flat torus with coordinates $x,y$.  With the positive Hodge-Laplacian convention used in this paper, the Hodge Laplacian on one-forms acts componentwise on the flat torus:
\begin{equation}
\Delta_1(f\,dx+g\,dy)
=
(-\partial_x^2-\partial_y^2)f\,dx
+
(-\partial_x^2-\partial_y^2)g\,dy .
\end{equation}
The harmonic one-forms are
\begin{equation}
dx,\qquad dy,
\end{equation}
and a harmonic one-form has the form
\begin{equation}
\omega_H=a\,dx+b\,dy.
\end{equation}
The coefficients are period coordinates:
\begin{equation}
a=\int_{\gamma_x}\omega,
        \qquad
        b=\int_{\gamma_y}\omega,
\end{equation}
where $\gamma_x$ and $\gamma_y$ are the two fundamental cycles.

Choose the non-harmonic eigenform
\begin{equation}
        \eta=\sin(2\pi x)\,dx .
\end{equation}
It has zero harmonic projection, since its coefficient has mean zero, and
\begin{equation}
        \Delta_1\eta=4\pi^2\eta .
\end{equation}
Therefore the Hodge heat equation
\begin{equation}
        \partial_t\omega=-\Delta_1\omega
\end{equation}
with initial value
\begin{equation}
        \omega_0=a_0dx+b_0dy+\varepsilon\eta
\end{equation}
has the explicit solution
\begin{equation}
        \omega(t)=a_0dx+b_0dy+
        \varepsilon e^{-4\pi^2t}\sin(2\pi x)\,dx .
        \label{eq:torus-hodge-heat-explicit}
\end{equation}
Thus the positive spectral mode is damped exponentially, while the harmonic periods are unchanged.

\begin{remark}[Relation with the topological thermostat normal form]
\label{rem:torus-normal-form}
The Hodge Laplacian should not be identified with an orthogonal
projector.  It vanishes on the harmonic sector but assigns its
positive eigenvalues as decay rates to the non-harmonic spectral modes.
For the exact eigenform
\[
        \eta=\sin(2\pi x)\,dx
        =\dd\!\left(-\frac{\cos(2\pi x)}{2\pi}\right)
\]
used above, however,
\[
        \Delta_1\eta=4\pi^2\eta.
\]
Consequently, on the invariant subspace
\[
        \mathcal Y
        =
        \operatorname{span}\{\eta,dx,dy\},
\]
one has
\[
        \Delta_1=4\pi^2P_\eta,
\]
where $P_\eta$ is the $L^2$-orthogonal projection onto
$\operatorname{span}\{\eta\}$ and $P_{\cH}$ is the $L^2$-orthogonal
projection onto $\cH^1(\T^2)=\operatorname{span}\{dx,dy\}$.

For an exact target $\omega_*=\varepsilon_*\eta$, the continuous
counterpart of the normal form
\eqref{eq:introduction-topological-thermostat} is therefore
\begin{equation}
        \partial_t\omega
        =
        -\Delta_1(\omega-\omega_*)
        -\kappa P_{\cH}\omega
        =
        -4\pi^2P_\eta(\omega-\omega_*)
        -\kappa P_{\cH}\omega,
        \qquad
        \omega\in\mathcal Y.
        \label{eq:torus-topological-thermostat-normal-form}
\end{equation}
Thus, on the selected exact mode, the Hodge heat term coincides with
the cut-space relaxation part of the projector normal form at rate
$4\pi^2$.  On the full non-harmonic space, however, $\Delta_1$ damps
different eigenmodes at their natural spectral rates.  The
Hodge heat equation displayed above is the special case $\omega_*=0$
and $\kappa=0$.
\end{remark}

Forced topological ringing uses the same sectorwise architecture but
replaces the dissipative harmonic block $-\kappa I_2$ by a
skew-symmetric generator.  If the feedback induces
\begin{equation}
        \dot a=-\Omega b,
        \qquad
        \dot b=\Omega a,
\end{equation}
then, for $a_0=1$ and $b_0=0$,
\begin{equation}
        \omega(t)=
        \cos(\Omega t)\,dx
        +\sin(\Omega t)\,dy
        +\varepsilon e^{-4\pi^2t}\sin(2\pi x)\,dx .
        \label{eq:torus-forced-ringing-explicit}
\end{equation}
This formula is the intended picture.  The topology does not oscillate by itself.  Hodge heat removes the positive spectral mode, while the feedback rotates the two-dimensional harmonic zero-mode sector.

\begin{example}[Point-current realization on the flat torus]
Choose a point sensor $p\in\T^2$ and read the two directional components
\begin{equation}
S_x(\omega)=\omega_p(\partial_x),
        \qquad
        S_y(\omega)=\omega_p(\partial_y).
\end{equation}
Choose a point actuator at $q\in\T^2$ with one-current directions $\partial_x$ and $\partial_y$.  The cross-coupled feedback
\begin{equation}
-\Omega S_y(\omega)\delta_{q,\partial_x}
        +\Omega S_x(\omega)\delta_{q,\partial_y}
\end{equation}
induces, on the harmonic sector, the rotation matrix
\begin{equation}
\begin{pmatrix}0&-\Omega\\ \Omega&0\end{pmatrix},
\end{equation}
up to the normalization determined by the harmonic projection of the Dirac currents.
\end{example}

On the flat torus, the harmonic forms $dx$ and $dy$ are constant, so evaluation at any point separates the two harmonic components:
\begin{equation}
S_x(dx)=1,
\quad S_x(dy)=0,
\qquad
S_y(dx)=0,
\quad S_y(dy)=1.
\end{equation}
Similarly, the harmonic projection of a Dirac one-current oriented in the $x$-direction is nonzero in the $dx$ direction, and the harmonic projection of a Dirac one-current oriented in the $y$-direction is nonzero in the $dy$ direction.  Thus the sensor matrix and the actuator matrix both have rank two on the harmonic sector $\operatorname{span}\{dx,dy\}$.  This verifies the rank principle in the simplest continuous case.

\section{Edge spaces and the Hodge decomposition of a network}
We now translate the preceding continuous motivation into the finite-dimensional setting of network edge flows.  This is not merely a discretization convenience.  It is also the setting in which, in our view, the most immediate applied model problems occur: line currents in inverter microgrids, line flows in meshed transmission networks, and more generally flows on networks with nontrivial cycles.  In such graph models the distinction between useful source-to-load transfer and internal circulation becomes especially transparent.

We use the standard cochain viewpoint of graph Hodge theory.  The underlying graph is undirected, but we fix once and for all a reference orientation for each edge in order to write coordinates and matrices.  For the incidence matrix itself we adopt below the power-flow sign convention, which differs from the usual coboundary convention by a global minus sign.  Vertex functions form the space of $0$-cochains
\begin{equation}
C^0(G)=L^2(V)\cong\R^n,
\end{equation}
and skew-symmetric edge functions form the space of $1$-cochains
\begin{equation}
C^1(G)=L^2_{\wedge}(E)\cong\R^m.
\end{equation}
We identify $\R^m$ with $X=\R^E$ through the canonical edge-coordinate basis $\{\delta_e:e\in E\}$ defined in \eqref{eq:canonical-edge-coordinate-vector}.  The vector $\delta_e$ represents a unit flow in the chosen reference orientation of edge $e$.  Relabelling the edges merely permutes this basis, while reversing an edge orientation changes the sign convention for its coordinate; statements about which physical edges occur in the support are unaffected.

Elements of $C^1(G)$ are edge flows.  If the reference orientation of an edge $e$ is $i\to j$, then a coordinate vector $f\in\R^m$ represents the skew-symmetric edge function $X_f$ with
\begin{equation}
X_f(i,j)=f_e,\qquad X_f(j,i)=-f_e.
\end{equation}
Thus a positive coordinate means flow in the chosen reference orientation, while a negative coordinate means flow in the opposite physical direction.  The reference orientation is bookkeeping; the actual directed weighted graph at a given instant is encoded by the signs and magnitudes of the edge-flow $1$-cochain.  This is the convention emphasized by Lim: the underlying graph may remain undirected while its edge flows encode directed weighted data \cite{LimHodgeGraphs}.

Let $G=(V,E)$ be connected, with $n=|V|$ and $m=|E|$, and let
\begin{equation}
        B\in\Lin(X,\R^n)
        \label{eq:incidence-operator-space}
\end{equation}
be the incidence operator associated with the chosen reference orientation.  We use the power-flow sign convention: the column associated with a reference-oriented edge has entry $+1$ at the tail and $-1$ at the head.  Thus a positive edge coordinate represents flow from tail to head, and $Bf$ is the corresponding vector of net nodal outflows or injections.  The transpose
\begin{equation}
        B^\top\in\Lin(\R^n,X)
        \label{eq:transpose-incidence-operator-space}
\end{equation}
maps nodal potentials to oriented edge drops, and
\begin{equation}
        (B^\top v)_e
        =
        v_{\mathrm{tail}(e)}-v_{\mathrm{head}(e)}
\end{equation}
is the oriented potential drop along the reference direction.  With the usual cochain convention the coboundary is $d_0=-B^\top$; this global sign has no effect on $\im B^\top$, $\Ker B$, or the Hodge projections used below.

\begin{definition}[Cut and harmonic cycle spaces]
The cut space is
\begin{equation}
\cC:=\im B^\top\subset\R^m.
\end{equation}
The harmonic cycle space is
\begin{equation}
\cH:=\Ker B\subset\R^m.
\end{equation}
We denote by $P_C,P_H\in\Lin(X)$ the orthogonal projections with ranges $\cC$ and $\cH$, respectively.
\end{definition}

\begin{definition}[Edge support and matrix support]
Let $Y\subseteq X=\R^E$ be a linear subspace.  Its physical edge support is
\begin{equation}
        \operatorname{esupp}(Y)
        :=
        \left\{
        e\in E:\text{there exists }y\in Y\text{ with }y_e\ne0
        \right\}.
        \label{eq:edge-support-subspace}
\end{equation}
If $P_Y$ denotes the Euclidean orthogonal projection onto $Y$, then
\begin{equation}
\begin{aligned}
        e\in\operatorname{esupp}(Y)
        &\iff P_Y\delta_e\ne0\\
        &\iff
        \langle\delta_e,P_Y\delta_e\rangle_X
        =\norm{P_Y\delta_e}^2>0.
\end{aligned}
        \label{eq:edge-support-projection-characterization}
\end{equation}
We therefore also write $\operatorname{esupp}(P_Y):=\operatorname{esupp}(Y)$.  This notion depends on the canonical physical edge coordinates, but not on a chosen basis of the subspace $Y$.

For $L\in\Lin(X)$, its matrix support in the canonical edge basis is
\begin{equation}
        \operatorname{msupp}(L)
        :=
        \left\{
        (e,f)\in E\times E:
        \langle\delta_e,L\delta_f\rangle_X\ne0
        \right\}.
        \label{eq:matrix-support-edge-operator}
\end{equation}
Thus $\operatorname{msupp}(L)$ records exactly the nonzero matrix entries of $L$ in physical edge coordinates.  A larger set such as $A_0\times R$ below describes instead the positions that may potentially be nonzero when only sensor and actuator supports are prescribed.
\end{definition}

Since $G$ is connected, $\rank B=n-1$, and the fundamental orthogonal decomposition is
\begin{equation}
\R^m=\im B^\top\oplus\Ker B.
\end{equation}
The dimension of $\cH$ is the first Betti number
\begin{equation}
b_1=m-n+1.
\end{equation}
The edge support of the harmonic space is
\begin{equation}
        E_{\rm cyc}
        :=\operatorname{esupp}(\cH)
        =\operatorname{esupp}(P_H).
        \label{eq:cyclic-edge-support}
\end{equation}
It is exactly the set of edges belonging to at least one cycle, equivalently the set of non-bridge edges.  Indeed, an edge belongs to a cycle precisely when some vector in $\Ker B$ has a nonzero coordinate on that edge.  Formula \eqref{eq:edge-support-projection-characterization} gives the harmonic-basis-independent test
\begin{equation}
        e\in E_{\rm cyc}
        \iff
        \langle\delta_e,P_H\delta_e\rangle_X
        =\norm{P_H\delta_e}^2>0.
        \label{eq:cyclic-edge-projector-test}
\end{equation}
The set $E_{\rm cyc}$ is independent of the orthonormal basis used to represent $\Ker B$.  Positive diagonal changes of edge metric alter the numerical projector entries but not the underlying harmonic subspace or its physical edge support.
This is the one-dimensional graph Hodge decomposition for edge flows on the network.  The adjective one-dimensional is important.  In the finite-dimensional network part of this paper the graph is not completed by filled faces or higher-dimensional cells; even a triangle is treated as a loop of three edges, not as a filled $2$-simplex.  Thus the cochain complex stops at degree one,
\begin{equation}
C^0(G)\xrightarrow{\,-B^\top\,}C^1(G)\longrightarrow 0,
\end{equation}
and the next coboundary is absent:
\begin{equation}
d_1=0,
        \qquad
        \im d_1^*=\{0\}.
\end{equation}
Consequently Lim's three-term Hodge decomposition for $1$-cochains on a higher-dimensional complex,
\begin{equation}
C^1=\im d_0\oplus \im d_1^*\oplus \mathcal H^1,
\end{equation}
collapses here to
\begin{equation}
C^1(G)=\im B^\top\oplus \Ker B.
\end{equation}
In this specialization, every divergence-free edge flow belongs to the harmonic cycle-flow space.  Hence $P_H$ projects onto the whole graph cycle space $\Ker B$, whose dimension is the first Betti number $b_1=m-n+1$.  Small local cycles are included in this space.  They would be separated into a coexact or curl-type sector only if the graph were augmented by filled $2$-cells.  For graph Hodge theory and its relation to elementary linear algebra and cohomology, see Lim \cite{LimHodgeGraphs}.

\section{From Hodge coordinates to the topological thermostat}
The Hodge decomposition of edge space is not, by itself, a dynamical statement.  It is a coordinate decomposition.  It identifies which part of an edge-flow vector is a transfer flow and which part is a cycle flow, but it does not say how these components evolve under a given physical or controlled dynamics.

Let $G=(V,E)$ be connected, with the fixed reference orientation chosen above.  The edge-flow space admits the orthogonal splitting
\begin{equation}
\R^m=\im B^\top\oplus\Ker B.
\end{equation}
We write
\begin{equation}
P_C:\R^m\to\im B^\top,
        \qquad
        P_H:\R^m\to\Ker B
\end{equation}
for the corresponding projections.  Thus every edge flow $f\in\R^m$ decomposes uniquely as
\begin{equation}
f=f_C+f_H,
        \qquad
        f_C=P_Cf,
        \qquad
        f_H=P_Hf.
\end{equation}
The component $f_C$ is the cut, or transfer, component.  The component $f_H$ is the harmonic, or cycle-flow, component.  Throughout this section, $P_H$ refers to the one-dimensional graph-network projector onto $\Ker B$ described above; it includes all independent graph cycles, including local loops, because no $2$-cochain space has been introduced.

Suppose now that an autonomous edge-flow dynamics is given abstractly by
\begin{equation}
\dot f=F(f).
\end{equation}
Then the Hodge coordinates satisfy
\begin{equation}
\dot f_C=P_CF(f_C+f_H),
\end{equation}
and
\begin{equation}
\dot f_H=P_HF(f_C+f_H).
\end{equation}
These equations are always true, but they need not be decoupled.  In general, the vector field $F$ may mix the cut and harmonic subspaces.  Thus Hodge theory supplies coordinates; it does not automatically produce invariant dynamical subsystems.  Invariance of the two summands requires additional structure, for instance a symmetry of the dynamics or a vector field specifically adapted to the decomposition.

This is the point at which the thermostat idea enters.  Motivated by the scalar sensor--actuator thermostat, we do not try to derive a passive network law from the Hodge decomposition alone.  Instead, we propose a closed-loop vector field whose action is prescribed in Hodge coordinates.  The design requirement is deliberately simple.  First, the useful transfer component should relax toward a prescribed transfer flow $f_*\in\im B^\top$:
\begin{equation}
\dot f_C=-\alpha(f_C-f_*),
        \qquad
        \alpha>0.
\end{equation}
Second, the harmonic cycle-flow component should be damped:
\begin{equation}
\dot f_H=-\kappa f_H,
        \qquad
        \kappa>0.
\end{equation}
Adding these two equations and using the decomposition $f=f_C+f_H$ gives
\begin{equation}
\dot f
=
-\alpha P_C(f-f_*)
-\kappa P_Hf.
\end{equation}
This is the ideal topological thermostat.

The equation should therefore be read as an ideal closed-loop normal form, not as the passive dynamics of a power network.  The passive grid model may be algebraic, as in DC power flow, or second order, as in the swing equations.  The thermostat equation instead specifies what a feedback layer is meant to accomplish on the edge-flow output: track the prescribed transfer flow and damp the cohomological component.

It is also useful to record the sensor--actuator interpretation.  The ideal harmonic sensor is the projection
\begin{equation}
f\longmapsto P_Hf,
\end{equation}
which reads out the cycle-flow part of the line-flow vector.  The ideal actuator applies the opposite vector field
\begin{equation}
P_Hf\longmapsto -\kappa P_Hf.
\end{equation}
Similarly, the cut-space regulator reads
\begin{equation}
f\longmapsto P_C(f-f_*)
\end{equation}
and actuates
\begin{equation}
P_C(f-f_*)\longmapsto -\alpha P_C(f-f_*).
\end{equation}
Thus the topological thermostat is a feedback law whose measured variable is not the whole line-flow vector but its harmonic projection.  This is the small control-theoretic twist: the Hodge coordinates are classical, but the harmonic coordinate is made into the regulated output.

\begin{theorem}[Topological thermostat]
\label{thm:topological-thermostat}
Let $G$ be connected, equip $\R^m$ with the Euclidean edge inner product, and let $\R^m=\cC\oplus\cH$ be the resulting orthogonal cut-cycle decomposition.  Let $f_*\in\cC$ be a prescribed transfer flow.  Consider
\begin{equation}
\dot f=-\alpha P_C(f-f_*)-\kappa P_Hf,
        \qquad \alpha,\kappa>0.
        \label{eq:topological-thermostat}
\end{equation}
Write $f=f_C+f_H$ with $f_C\in\cC$ and $f_H\in\cH$.  Then
\begin{equation}
f_C(t)-f_* = e^{-\alpha t}(f_C(0)-f_*),
\end{equation}
and
\begin{equation}
f_H(t)=e^{-\kappa t}f_H(0).
\end{equation}
In particular, the harmonic loop-flow component is exponentially damped without disturbing the prescribed cut-space target.
\end{theorem}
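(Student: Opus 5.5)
The plan is to apply $P_C$ and $P_H$ to \eqref{eq:topological-thermostat} and use the algebraic identities of orthogonal projections to split the linear system into two uncoupled scalar-rate equations. First I will record the facts about the decomposition $\R^m=\cC\oplus\cH$ that are needed. Since it is orthogonal, $P_C$ and $P_H$ are complementary orthogonal projections:
\[
P_C+P_H=I,\qquad P_C^2=P_C,\qquad P_H^2=P_H,\qquad P_CP_H=P_HP_C=0 .
\]
Because $f_*\in\cC$, we also have $P_Cf_*=f_*$ and $P_Hf_*=0$. Both projections are constant linear maps, so $f_C=P_Cf$ and $f_H=P_Hf$ are $C^1$ whenever $f$ is, with $\dot f_C=P_C\dot f$ and $\dot f_H=P_H\dot f$. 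Existence and uniqueness of global solutions is immediate, since \eqref{eq:topological-thermostat} is an affine linear ODE with constant coefficients.

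Next I will apply $P_C$ to \eqref{eq:topological-thermostat}. Using $P_C^2=P_C$ and $P_CP_H=0$ gives
\[
\dot f_C=-\alpha P_C(f-f_*)=-\alpha(f_C-f_*).
\]
Since $f_*$ is constant, the error $e:=f_C-f_*$ satisfies $\dot e=-\alpha e$, and therefore $e(t)=e^{-\alpha t}e(0)$. Applying $P_H$ in the same way, with $P_HP_C=0$ and $P_H^2=P_H$, gives $\dot f_H=-\kappa f_H$, and hence $f_H(t)=e^{-\kappa t}f_H(0)$. Both solution formulas follow from uniqueness for linear ODEs in $\R^m$ (equivalently, $\frac{d}{dt}(e^{\alpha t}e)=0$). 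They are consistent with the subspaces, because each solution stays in its own summand.

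The closing sentence of the theorem then follows directly. The harmonic component decays exponentially. The cut equation contains no $f_H$ term, so the evolution of $f_C$ toward $f_*$ does not depend on the harmonic initial data, and the target $f_*$ is not disturbed. This argument has no real obstacle. The only point that needs care is the use of orthogonality, or more precisely complementarity of the projections, to kill the cross terms $P_CP_H$ and $P_HP_C$, together with the hypothesis $f_*\in\cC$, which ensures that $f_*$ does not feed into the harmonic equation.
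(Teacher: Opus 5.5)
Your proposal is correct and follows the paper's proof: apply $P_C$ and $P_H$ to the equation, use $P_CP_H=P_HP_C=0$ and $P_Cf_*=f_*$ to obtain the decoupled equations $\dot f_C=-\alpha(f_C-f_*)$ and $\dot f_H=-\kappa f_H$, and solve them. The paper also records the dissipation identity $\frac12\frac{\dd}{\dd t}\norm{e}^2=-\alpha\norm{P_Ce}^2-\kappa\norm{P_He}^2$ for $e=f-f_*$ as an equivalent viewpoint, but this is not needed for the stated formulas.
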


\begin{proof}
Apply $P_C$ and $P_H$ to the equation.  Since $P_CP_H=P_HP_C=0$ and $P_Cf_*=f_*$, the cut component satisfies
\begin{equation}
\dot f_C=-\alpha(f_C-f_*),
\end{equation}
while the harmonic component satisfies
\begin{equation}
\dot f_H=-\kappa f_H.
\end{equation}
Equivalently, with $e=f-f_*$, orthogonality gives the dissipation identity
\[
\frac12\frac{\dd}{\dd t}\norm{e(t)}^2
=
-\alpha\norm{P_Ce(t)}^2
-\kappa\norm{P_He(t)}^2.
\]
The same calculation holds, with the induced norm, for any fixed edge inner product with respect to which $P_C$ and $P_H$ are the corresponding orthogonal projections.  The two projected equations give the stated formulas.
\end{proof}

\begin{remark}
The sign convention in \eqref{eq:topological-thermostat} is coordinate independent: the projectors act as $+I$ on their ranges, while their matrices in oriented edge coordinates may contain entries of either sign.  The theorem is deliberately elementary.  Its purpose is to identify the correct control target.  A generic damping law acts on the whole edge-flow vector.  The topological thermostat acts on the component in $\Ker B$.  In applications this is the loop-flow or cycle-current component.  The ideal law uses the exact Hodge projector; the finite-channel realization in the next section replaces this global projector by a sensor--actuator operator that is exact after compression to harmonic coordinates.
\end{remark}

\paragraph{Betti number as controller-rank count.}
This formulation gives a useful control-theoretic reading of a familiar Hodge-theoretic fact.  Of course, it adds no new fact to Hodge theory: the identity
\begin{equation}
\dim \cH^k=b_k
\end{equation}
is one of its basic consequences.  Likewise, in graph and circuit theory, $b_1$ is the circuit rank, or cyclomatic number, and counts the independent cycle or loop-current variables of the network; see, for instance, Harary \cite{HararyGraphTheory} and the classical circuit-theory treatment of Chua, Desoer and Kuh \cite{ChuaDesoerKuh}.  Closely related uses of Hodge Laplacians, homological information, and higher-order Laplacians in network dynamics and control have also been studied; see Lim \cite{LimHodgeGraphs} and Muhammad--Egerstedt \cite{MuhammadEgerstedtHigherOrder}.  The point made here is more specific: once the topological thermostat is defined as a feedback law acting on the harmonic sector, the Betti number becomes a concrete controller-rank count.  It is the minimum number of independent harmonic channels needed to observe and actuate the full topological sector.

In the graph case, $\cH^1=\Ker B$ and, for a connected graph,
\begin{equation}
\begin{aligned}
        b_1&=|E|-|V|+1,\\
        b_1(\Theta)&=3-2+1=2,\\
        b_1(\mathrm{IEEE\ 14})&=20-14+1=7,\\
        b_1(\T^2)&=2.
\end{aligned}
\end{equation}
On the two-torus the harmonic basis is represented by $dx$ and $dy$, so two independent period channels suffice for the full harmonic one-form sector.  The useful observation is not the Betti-number identity itself, but the way the thermostat equation makes it operational: topology prescribes the minimal rank of a full topological controller.

\subsection{Higher-order Hodge-thermostat normal forms}
\label{subsec:higher-order-hodge-thermostats}

The finite-dimensional network part of this paper uses only vertex functions and edge-flow $1$-cochains.  Nevertheless, the normal-form idea is not restricted to one-dimensional graphs.  We record the higher-order version as an explanatory extension, because it shows that the edge-flow thermostat is the $k=1$ member of a more general Hodge-coordinate feedback principle.

Let $\mathcal K$ be a finite simplicial or cellular complex, and let
\begin{equation}
C^k(\mathcal K)
\end{equation}
be the real vector space of $k$-cochains, equipped with a fixed inner product.  Denote the coboundary maps by
\begin{equation}
d_{k-1}:C^{k-1}(\mathcal K)\to C^k(\mathcal K),
        \qquad
 d_k:C^k(\mathcal K)\to C^{k+1}(\mathcal K),
\end{equation}
and let $d_{k-1}^*$ and $d_k^*$ be their adjoints.  The $k$-Hodge Laplacian is
\begin{equation}
\Delta_k=d_{k-1}d_{k-1}^*+d_k^*d_k.
\end{equation}
The corresponding finite-dimensional Hodge decomposition is
\begin{equation}
C^k(\mathcal K)
=
\im d_{k-1}
\oplus
\cH^k
\oplus
\im d_k^*,
\end{equation}
where
\begin{equation}
\cH^k=\Ker\Delta_k=\Ker d_k\cap \Ker d_{k-1}^*
\end{equation}
is the $k$-th harmonic cochain space.  Its dimension is the $k$-th Betti number $b_k$.

Let
\begin{equation}
P_{\mathrm{ex}}^k,\qquad P_{\cH}^k,\qquad P_{\mathrm{coex}}^k
\end{equation}
denote the orthogonal projections onto
\begin{equation}
\im d_{k-1},
        \qquad
\cH^k,
        \qquad
\im d_k^*,
\end{equation}
respectively.  A higher-order analogue of the topological thermostat is the closed-loop normal form
\begin{equation}
\dot x
=
-\alpha P_{\mathrm{ex}}^k(x-x_*)
-\beta P_{\mathrm{coex}}^k x
-\kappa P_{\cH}^k x,
        \qquad
        x(t)\in C^k(\mathcal K),
\end{equation}
where
\begin{equation}
x_*\in \im d_{k-1},
        \qquad
        \alpha,\beta,\kappa>0.
\end{equation}
Writing
\begin{equation}
x=x_{\mathrm{ex}}+x_{\cH}+x_{\mathrm{coex}}
\end{equation}
according to the Hodge decomposition, the three components satisfy
\begin{equation}
\frac{d}{dt}P_{\mathrm{ex}}^k(x-x_*)
=
-\alpha P_{\mathrm{ex}}^k(x-x_*),
\end{equation}
\begin{equation}
\frac{d}{dt}P_{\mathrm{coex}}^k x
=
-\beta P_{\mathrm{coex}}^k x,
\end{equation}
and
\begin{equation}
\frac{d}{dt}P_{\cH}^k x
=
-\kappa P_{\cH}^k x.
\end{equation}
Thus the exact component tracks the prescribed exact target, the coexact component is damped, and the cohomological component is damped at the prescribed rate $\kappa$.

This formulation also clarifies the relation with passive Hodge heat.  The uncontrolled equation
\begin{equation}
\dot x=-\Delta_kx
\end{equation}
damps the positive Hodge-Laplacian modes but leaves
\begin{equation}
\cH^k=\Ker\Delta_k
\end{equation}
unchanged.  Adding a feedback term of the form
\begin{equation}
-\kappa P_{\cH}^k x
\end{equation}
is therefore the direct higher-order analogue of the topological thermostat: it acts precisely on the harmonic zero-mode sector left uncontrolled by passive Hodge heat.

The graph theorem above is the one-dimensional specialization of this normal form.  In the network part of the present paper the graph is treated as a one-dimensional complex.  Hence
\begin{equation}
C^0(G)\xrightarrow{\,d_0\,}C^1(G)\longrightarrow 0,
        \qquad
        d_0=-B^\top,
\end{equation}
and there are no $2$-cochains:
\begin{equation}
d_1=0,
        \qquad
        \im d_1^*=0.
\end{equation}
Consequently,
\begin{equation}
C^1(G)=\im B^\top\oplus\Ker B,
        \qquad
        \cH^1=\Ker B.
\end{equation}
The higher-order thermostat therefore reduces exactly to
\begin{equation}
\dot f
=
-\alpha P_C(f-f_*)
-\kappa P_Hf,
\end{equation}
which is Theorem~\ref{thm:topological-thermostat}.

For $k=0$, the state is a vertex function.  On a connected graph or connected complex, the harmonic $0$-cochains are the constant functions.  Thus the degree-zero case recovers the familiar scalar phenomenon that passive heat flow damps nonconstant components while preserving the average.  The present paper uses this scalar case as motivation.  Its finite-dimensional network control results are specialized to $k=1$, where the controlled variables are edge-flow $1$-cochains.

\section{Finite-channel sensor--actuator realizations}
\label{sec:finite-edge-channel-realization}
The ideal law uses the global Hodge projector $P_H$.  On a finite graph this projector is already finite rank, with $\rank P_H=b_1$.  Thus the distinction made in this section is not one of dimension, but between the ideal Hodge operator and a realization through selected physical edge channels.

Recall that $X=\R^E\cong\R^m$ is the edge space, and put
\begin{equation}
        \cC=\im B^\top,
        \qquad
        \cH=\Ker B,
        \qquad
        X=\cC\oplus\cH.
\end{equation}
Let $P_C$ and $P_H$ be the corresponding orthogonal projections.  Choose an isometric coordinate map
\begin{equation}
        H\in\Lin(\R^{b_1},X)
\end{equation}
whose columns form an orthonormal basis of $\cH$, so that $P_H=HH^\top$.  The harmonic-compression map is
\begin{equation}
        \Gamma_{\cH}\in\Lin\bigl(\Lin(X),\Lin(\R^{b_1})\bigr),
        \qquad
        \Gamma_{\cH}(L):=H^\top L H.
        \label{eq:harmonic-compression-definition}
\end{equation}
It restricts $L$ to harmonic inputs and projects the result back to harmonic coordinates.  If $H$ is replaced by $HQ$, $Q\in O(b_1)$, then $\Gamma_{\cH}(L)$ is replaced by $Q^\top\Gamma_{\cH}(L)Q$; hence $\Gamma_{\cH}(L)=-\kappa I_{b_1}$ is basis independent.

The ideal harmonic damping operator is
\begin{equation}
        L_{\rm id}:=-\kappa P_H.
\end{equation}
It has the block identities
\begin{equation}
        P_CL_{\rm id}P_C=P_CL_{\rm id}P_H=P_HL_{\rm id}P_C=0,
        \qquad
        P_HL_{\rm id}P_H=-\kappa P_H,
\end{equation}
and therefore, relative to $X=\cC\oplus\cH$,
\begin{equation}
        L_{\rm id}
        =
        \begin{pmatrix}
        0&0\\
        0&-\kappa I_{b_1}
        \end{pmatrix}.
        \label{eq:ideal-harmonic-block-form}
\end{equation}

A finite-channel feedback has the form
\begin{equation}
        L_{\rm ch}=CKS.
        \label{eq:finite-channel-feedback-operator}
\end{equation}
The channel maps have domains
\begin{equation}
        S\in\Lin(X,\R^{n_s}),
        \qquad
        K\in\Lin(\R^{n_s},\R^{n_a}),
        \qquad
        C\in\Lin(\R^{n_a},X),
        \label{eq:finite-channel-map-spaces}
\end{equation}
where $n_s$ and $n_a$ are the sensor and actuator channel counts.  The sensor map is typically coordinate readout on selected edges, while $C$ is coordinate injection or a matrix of device-sensitivity directions.  Full harmonic-sector control requires at least $b_1$ independent channels of each type; the minimal square count is $n_s=n_a=b_1$, subject to the rank conditions below.  The finite-channel objective is not, in general, $CKS=-\kappa P_H$, but the compression identity
\begin{equation}
        \Gamma_{\cH}(CKS)=H^\top CKSH=-\kappa I_{b_1}.
        \label{eq:finite-channel-compression-target}
\end{equation}

The elementary matrix equation underlying all finite-channel constructions is recorded separately.  This keeps the later feedback proofs focused on the Hodge geometry rather than on generalized-inverse calculations.  The result is standard in matrix analysis and generalized-inverse theory \cite{HornJohnsonMatrixAnalysis,BenIsraelGreville}; the detailed proof is included for clarity and later reuse.

\begin{lemma}[One-sided inverses and prescribed matrix compression]
\label{lem:matrix-compression}
Let $d\ge1$ and let
\begin{equation}
        G_C\in\Lin(\R^{n_a},\R^d),
        \qquad
        G_S\in\Lin(\R^d,\R^{n_s}).
        \label{eq:matrix-compression-operator-spaces}
\end{equation}
The following assertions are equivalent:
\begin{enumerate}[label=\textup{(\roman*)}]
\item
\begin{equation}
        \rank G_C=\rank G_S=d;
\end{equation}
\item $G_C$ admits a right inverse $G_C^R\in\Lin(\R^d,\R^{n_a})$ and $G_S$ admits a left inverse $G_S^L\in\Lin(\R^{n_s},\R^d)$, that is,
\begin{equation}
        G_CG_C^R=I_d,
        \qquad
        G_S^LG_S=I_d;
\end{equation}
\item for every $Q\in\Lin(\R^d)$ there exists $K\in\Lin(\R^{n_s},\R^{n_a})$ satisfying
\begin{equation}
        G_CKG_S=Q;
        \label{eq:general-matrix-compression}
\end{equation}
\item equation \eqref{eq:general-matrix-compression} is solvable for at least one invertible $Q\in\Lin(\R^d)$.
\end{enumerate}
If these conditions hold, then
\begin{equation}
        K_0=G_C^RQG_S^L
        \label{eq:general-compression-particular-solution}
\end{equation}
is a solution for every choice of one-sided inverses.  The complete solution set is
\begin{equation}
        K_0+
        \left\{
        Z\in\Lin(\R^{n_s},\R^{n_a}):G_CZG_S=0
        \right\},
        \label{eq:general-compression-affine-space}
\end{equation}
whose dimension is $n_an_s-d^2$.

Let $G_C^\dagger$ and $G_S^\dagger$ denote the Moore--Penrose inverses.  Then
\begin{equation}
        K_{\rm MP}=G_C^\dagger QG_S^\dagger
        \label{eq:general-compression-moore-penrose}
\end{equation}
is the unique solution of minimum Frobenius norm.  In the minimal square case $n_a=n_s=d$, the matrices $G_C$ and $G_S$ are invertible and
\begin{equation}
        K=G_C^{-1}QG_S^{-1}.
\end{equation}
\end{lemma}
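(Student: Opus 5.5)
I would prove the equivalences in the cycle (i)$\Rightarrow$(ii)$\Rightarrow$(iii)$\Rightarrow$(iv)$\Rightarrow$(i), and then treat the solution set, the minimum-norm solution and the square case separately.

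\textbf{The cycle of implications.}
For (i)$\Rightarrow$(ii), use that $\rank G_C=d$ means $G_C$ is surjective onto $\R^d$. Hence $G_CG_C^\top\in\Lin(\R^d)$ is invertible, and $G_C^R:=G_C^\top(G_CG_C^\top)^{-1}$ is a right inverse. Similarly, $\rank G_S=d$ means $G_S$ is injective, so $G_S^\top G_S$ is invertible and $G_S^L:=(G_S^\top G_S)^{-1}G_S^\top$ is a left inverse.

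For (ii)$\Rightarrow$(iii), set $K_0=G_C^RQG_S^L$ and compute $G_CK_0G_S=(G_CG_C^R)Q(G_S^LG_S)=Q$. This also proves the claim that $K_0$ solves the equation for every choice of one-sided inverses.

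The step (iii)$\Rightarrow$(iv) is trivial, taking for instance $Q=I_d$.

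For (iv)$\Rightarrow$(i), if $G_CKG_S=Q$ with $Q$ invertible, then
\[
d=\rank Q\le\min(\rank G_C,\rank G_S)\le d,
\]
so both ranks equal $d$.

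\textbf{Solution set and its dimension.}
The solution set is the translate of the kernel of the linear map $\Phi:Z\mapsto G_CZG_S$ by the particular solution $K_0$. Under (iii), $\Phi:\Lin(\R^{n_s},\R^{n_a})\to\Lin(\R^d)$ is surjective. Rank--nullity then gives $\dim\Ker\Phi=n_an_s-d^2$.

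\textbf{Minimum Frobenius norm.}
This is the step needing the most care. I would first check that $K_{\rm MP}$ is a solution: under full rank, $G_C^\dagger$ and $G_S^\dagger$ are exactly the explicit right and left inverses written above.

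Next I would show that $K_{\rm MP}$ is Frobenius-orthogonal to $\Ker\Phi$. Since $\Phi^*(Y)=G_C^\top YG_S^\top$, the orthogonal complement of $\Ker\Phi$ is $\im\Phi^*$. Substituting the explicit pseudoinverses gives
\[
K_{\rm MP}=G_C^\top\bigl[(G_CG_C^\top)^{-1}Q(G_S^\top G_S)^{-1}\bigr]G_S^\top\in\im\Phi^*.
\]
Every solution then has the form $K_{\rm MP}+Z$ with $Z\in\Ker\Phi$. By the Pythagorean identity,
\[
\|K_{\rm MP}+Z\|_F^2=\|K_{\rm MP}\|_F^2+\|Z\|_F^2,
\]
which yields both minimality and uniqueness.

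\textbf{Square case.}
If $n_a=n_s=d$, full rank means $G_C$ and $G_S$ are invertible. The one-sided inverses are then the genuine inverses, so $K=G_C^{-1}QG_S^{-1}$, which is the unique solution since $n_an_s-d^2=0$.
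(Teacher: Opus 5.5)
Your proposal is correct and follows the paper's proof essentially step by step. It uses the same explicit one-sided inverses $G_C^\top(G_CG_C^\top)^{-1}$ and $(G_S^\top G_S)^{-1}G_S^\top$, the same rank inequality for (iv)$\Rightarrow$(i), rank--nullity for the dimension count, and a Pythagorean argument for the Moore--Penrose solution. The only cosmetic difference is that you get the Frobenius orthogonality from $K_{\rm MP}\in\im\Phi^*=(\Ker\Phi)^\perp$. The paper instead checks $\langle K_{\rm MP},Z\rangle_{\rm F}=0$ directly by cyclicity of the trace, which is the same computation.
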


\begin{proof}
A matrix $G_C\in\Lin(\R^{n_a},\R^d)$ admits a right inverse if and only if it has full row rank $d$.  Indeed, if $G_CG_C^R=I_d$, then $\rank G_C=d$; conversely, full row rank makes $G_CG_C^\top$ positive definite and
\begin{equation}
        G_C^R=G_C^\top(G_CG_C^\top)^{-1}
\end{equation}
is a right inverse.  Likewise, $G_S$ admits a left inverse if and only if it has full column rank $d$, in which case
\begin{equation}
        G_S^L=(G_S^\top G_S)^{-1}G_S^\top.
\end{equation}
This proves the equivalence of \textup{(i)} and \textup{(ii)}.

If the one-sided inverses exist, then
\begin{equation}
        G_C(G_C^RQG_S^L)G_S=Q,
\end{equation}
so \textup{(ii)} implies \textup{(iii)}, and \textup{(iii)} implies \textup{(iv)}.  Conversely, if $G_CKG_S=Q$ for an invertible $Q$, then
\begin{equation}
        d=\rank Q\le\rank G_C\le d,
        \qquad
        d=\rank Q\le\rank G_S\le d,
\end{equation}
which gives \textup{(i)}.

The linear map
\begin{equation}
        \mathcal T:\Lin(\R^{n_s},\R^{n_a})\longrightarrow\Lin(\R^d),
        \qquad
        \mathcal T(K)=G_CKG_S,
\end{equation}
is surjective under these conditions.  Rank--nullity therefore gives
\begin{equation}
        \dim\Ker\mathcal T=n_an_s-d^2,
\end{equation}
and the affine description follows by subtracting one particular solution.

Under the full-rank assumptions,
\begin{equation}
        G_C^\dagger=G_C^\top(G_CG_C^\top)^{-1},
        \qquad
        G_S^\dagger=(G_S^\top G_S)^{-1}G_S^\top.
\end{equation}
If $K$ is any solution and $Z=K-K_{\rm MP}$, then $G_CZG_S=0$.  With the Frobenius inner product and cyclicity of the trace,
\begin{align}
        \langle K_{\rm MP},Z\rangle_{\rm F}
        &=\operatorname{tr}\!\left(
        G_S(G_S^\top G_S)^{-1}Q^\top
        (G_CG_C^\top)^{-1}G_CZ
        \right)\notag\\
        &=\operatorname{tr}\!\left(
        (G_S^\top G_S)^{-1}Q^\top
        (G_CG_C^\top)^{-1}G_CZG_S
        \right)=0.
\end{align}
Hence
\begin{equation}
        \norm K_{\rm F}^2
        =\norm{K_{\rm MP}}_{\rm F}^2+\norm Z_{\rm F}^2,
\end{equation}
which proves uniqueness of the minimum-norm solution.  The square formula is immediate.
\end{proof}

\begin{remark}[Metrics on controller coordinates]
The minimum-norm assertion in
Lemma~\ref{lem:matrix-compression} uses the Euclidean inner products on
the sensor and actuator coordinate spaces and the induced Frobenius norm
on gains.  If positive-definite channel metrics are prescribed instead,
the corresponding weighted generalized inverse gives the natural
minimum-norm representative.
\end{remark}

\begin{theorem}[Exact harmonic compression by finite-channel feedback]
\label{thm:finite-rank-feedback}
Assume
\begin{equation}
        \rank(SH)=b_1,
        \qquad
        \rank(H^\top C)=b_1.
        \label{eq:finite-channel-rank-conditions}
\end{equation}
Then, for every $\kappa>0$, there exists $K\in\Lin(\R^{n_s},\R^{n_a})$ such that
\begin{equation}
        H^\top CKSH=-\kappa I_{b_1}.
        \label{eq:finite-channel-exact-compression}
\end{equation}
Put
\begin{equation}
        G_C:=H^\top C\in\Lin(\R^{n_a},\R^{b_1}),
        \qquad
        G_S:=SH\in\Lin(\R^{b_1},\R^{n_s}).
\end{equation}
If $G_C^R$ is any right inverse of $G_C$ and $G_S^L$ any left inverse of $G_S$, then
\begin{equation}
        K_0=-\kappa G_C^RG_S^L
        \label{eq:finite-channel-general-gain}
\end{equation}
is one solution.  All solutions have the form
\begin{equation}
        K=K_0+Z,
        \qquad
        G_CZG_S=0.
        \label{eq:finite-channel-affine-gain-space}
\end{equation}
This affine space has dimension $n_an_s-b_1^2$, and the Moore--Penrose choice $K_{\min}=-\kappa G_C^\dagger G_S^\dagger$ is the minimum-Frobenius-norm representative for Euclidean command metrics.  In the minimal square case $n_a=n_s=b_1$,
\begin{equation}
        K=-\kappa(H^\top C)^{-1}(SH)^{-1}.
\end{equation}
\end{theorem}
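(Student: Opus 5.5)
The plan is to reduce the theorem to Lemma~\ref{lem:matrix-compression} with $d=b_1$, $G_C:=H^\top C$, $G_S:=SH$ and $Q:=-\kappa I_{b_1}$. The first step is to check the dimensions. Since $H\in\Lin(\R^{b_1},X)$ and $C\in\Lin(\R^{n_a},X)$, we have $G_C=H^\top C\in\Lin(\R^{n_a},\R^{b_1})$. Likewise $G_S=SH\in\Lin(\R^{b_1},\R^{n_s})$. These are exactly the operator spaces in \eqref{eq:matrix-compression-operator-spaces}. The rank hypotheses \eqref{eq:finite-channel-rank-conditions} are then precisely assertion~(i) of the lemma.

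Next I would use the associativity identity
\[
H^\top CKSH=(H^\top C)K(SH)=G_CKG_S,
\]
so that \eqref{eq:finite-channel-exact-compression} becomes literally the equation \eqref{eq:general-matrix-compression} with $Q=-\kappa I_{b_1}$. Assertion~(iii) of the lemma then gives existence. Formula \eqref{eq:general-compression-particular-solution} gives the particular solution
\[
K_0=G_C^R(-\kappa I_{b_1})G_S^L=-\kappa G_C^RG_S^L
\]
for any choice of one-sided inverses. The affine description \eqref{eq:general-compression-affine-space}, with its dimension $n_an_s-b_1^2$, transfers verbatim. The Moore--Penrose statement \eqref{eq:general-compression-moore-penrose} gives $K_{\min}=-\kappa G_C^\dagger G_S^\dagger$ as the minimum-Frobenius-norm gain, since the scalar $-\kappa$ commutes through. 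Here I would note explicitly that the minimality refers to Euclidean metrics on the channel coordinates, as in the remark following the lemma.

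For the square case $n_a=n_s=b_1$, both $G_C$ and $G_S$ are $b_1\times b_1$ matrices of full rank, hence invertible. The lemma's last formula then yields $K=-\kappa(H^\top C)^{-1}(SH)^{-1}$. This solution is unique, since the affine space has dimension $b_1^2-b_1^2=0$.

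There is no substantial obstacle, because everything is inherited from the lemma. The only point needing care is independence of the choice of orthonormal harmonic basis $H$. Replacing $H$ by $HQ'$ with $Q'\in O(b_1)$ changes $G_C$ to $Q'^\top G_C$ and $G_S$ to $G_SQ'$. These changes preserve the ranks, so the hypotheses do not depend on the basis. They also preserve the equation, because $Q'^\top(-\kappa I)Q'=-\kappa I$, as already observed after \eqref{eq:harmonic-compression-definition}. I would add this one-line remark so that the rank conditions are seen to be intrinsic to the pair $(S,C)$ and the subspace $\cH$.
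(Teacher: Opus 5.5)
Your proposal is correct and matches the paper's proof, which likewise applies Lemma~\ref{lem:matrix-compression} with $d=b_1$, $G_C=H^\top C$, $G_S=SH$ and $Q=-\kappa I_{b_1}$ and reads off every conclusion from it. Your added remarks on uniqueness in the square case and on invariance under $H\mapsto HQ'$ with $Q'\in O(b_1)$ are correct but not needed for the proof.
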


\begin{proof}
Apply Lemma~\ref{lem:matrix-compression} with
\begin{equation}
        d=b_1,
        \qquad
        G_C=H^\top C,
        \qquad
        G_S=SH,
        \qquad
        Q=-\kappa I_{b_1}.
\end{equation}
The conclusions follow directly from the lemma.
\end{proof}

\subsection*{Full edge-space dynamics under finite-channel feedback}

Theorem~\ref{thm:finite-rank-feedback} is an algebraic compression statement: it prescribes the harmonic-to-harmonic block of the finite-channel operator, but not its complete action on the edge space.  To derive the corresponding full dynamics, fix a constant cycle-free target $f_*\in\cC$ and put
\begin{equation}
        e=f-f_*.
\end{equation}
Since $P_Hf_*=0$, the ideal thermostat can equivalently be written as
\begin{equation}
        \dot f
        =
        -\alpha P_C(f-f_*)
        +L_{\rm id}(f-f_*),
        \qquad
        L_{\rm id}:=-\kappa P_H.
        \label{eq:ideal-target-centred-normal-form}
\end{equation}
The finite-channel counterpart is obtained by replacing $L_{\rm id}$ with
$L_{\rm ch}:=CKS$ on the same target error:
\begin{equation}
        \dot f
        =
        -\alpha P_C(f-f_*)
        +CKS(f-f_*).
        \label{eq:finite-channel-target-centred-law}
\end{equation}
Because $f_*$ is constant, the error therefore satisfies
\begin{equation}
        \dot e
        =
        -\alpha P_Ce+CKSe.
        \label{eq:finite-channel-full-error-ode}
\end{equation}

\begin{remark}[Target centering in finite-channel feedback]
\label{rem:finite-channel-target-centering}
The use of $f-f_*$ in \eqref{eq:finite-channel-target-centred-law} is not an arbitrary modification of the ideal normal form.  It preserves a property that the ideal harmonic projector has automatically: since $f_*\in\cC$,
\begin{equation}
        -\kappa P_Hf
        =
        -\kappa P_H(f-f_*).
\end{equation}
By contrast, the compression condition
\begin{equation}
        H^\top CKSH=-\kappa I_{b_1}
\end{equation}
does not imply $CKSf_*=0$, because it constrains only the harmonic-to-harmonic action of $CKS$.  Applying the finite-channel operator to the target error ensures that $f=f_*$ remains an equilibrium.  Equivalently, if $y=Sf$ and $y_*=Sf_*$ are the measured output and its target value, then the feedback signal is the output error
\begin{equation}
        CK(y-y_*)=CKS(f-f_*).
\end{equation}
If $CKS$ were instead applied to the raw flow $f$, the error equation would contain the generally nonzero constant forcing term $CKSf_*$.  Avoiding that term without target centering would require the additional condition $CKSf_*=0$, which is not part of Theorem~\ref{thm:finite-rank-feedback}.
\end{remark}

Write
\begin{equation}
        e=e_C+Hc,
        \qquad
        e_C=P_Ce\in\cC,
        \qquad
        c=H^\top e\in\R^{b_1}.
\end{equation}
Let $L=L_{\rm ch}=CKS\in\Lin(X)$ and define the blocks
\begin{equation}
\begin{aligned}
        L_{CC}&:=P_CL|_{\cC}\in\Lin(\cC),\\
        L_{CH}&:=P_CLH\in\Lin(\R^{b_1},\cC),\\
        L_{HC}&:=H^\top L|_{\cC}\in\Lin(\cC,\R^{b_1}),\\
        L_{HH}&:=H^\top LH\in\Lin(\R^{b_1}).
\end{aligned}
        \label{eq:finite-channel-block-definitions}
\end{equation}
If \eqref{eq:finite-channel-exact-compression} holds, then $L_{HH}=-\kappa I_{b_1}$ and \eqref{eq:finite-channel-full-error-ode} is equivalent to
\begin{equation}
        \dot e_C
        =
        (-\alpha I_{\cC}+L_{CC})e_C
        +L_{CH}c,
        \label{eq:finite-channel-cut-block}
\end{equation}
\begin{equation}
        \dot c
        =
        L_{HC}e_C
        -\kappa c.
        \label{eq:finite-channel-harmonic-block}
\end{equation}
In block form,
\begin{equation}
        \frac{d}{dt}
        \begin{pmatrix}e_C\\ c\end{pmatrix}
        =
        \begin{pmatrix}
        -\alpha I_{\cC}+L_{CC}&L_{CH}\\
        L_{HC}&-\kappa I_{b_1}
        \end{pmatrix}
        \begin{pmatrix}e_C\\ c\end{pmatrix}.
        \label{eq:finite-channel-block-matrix}
\end{equation}
The ideal thermostat is the special case $L_{CC}=L_{CH}=L_{HC}=0$.  A sparse finite-channel realization generally prescribes only the lower-right block exactly.  The remaining blocks measure cut-space spillover and cut-to-harmonic coupling.

\begin{remark}[Channel count and the full-edge limit]
\label{rem:channel-count-full-edge-limit}
The integers $n_s$ and $n_a$ are the numbers of sensor and actuator channels, respectively.  Since
\begin{equation}
        SH\in\Lin(\R^{b_1},\R^{n_s}),
        \qquad
        H^\top C\in\Lin(\R^{n_a},\R^{b_1}),
\end{equation}
the rank conditions in Theorem~\ref{thm:finite-rank-feedback} force $n_s\ge b_1$ and $n_a\ge b_1$.  Hence $n_a=n_s=b_1$ is the minimal square channel count for damping the full harmonic sector after compression.

At the opposite extreme, suppose all edge coordinates are both measured and actuated, so that $n_a=n_s=m$ and, in coordinate form, $S=I_X$ and $C=I_X$.  Then the finite-channel feedback operator is just $K$, and the compression condition becomes
\begin{equation}
        H^\top K H=-\kappa I_{b_1}.
\end{equation}
This condition still does not determine the ideal Hodge damping operator uniquely.  Indeed, every matrix of the form
\begin{equation}
        K=-\kappa P_H+Z,
        \qquad
        H^\top ZH=0,
\end{equation}
has the same harmonic compression.  The ideal operator is the particular choice
\begin{equation}
        K_{\rm id}=-\kappa P_H.
\end{equation}
Thus observing and actuating all edge coordinates makes the ideal Hodge operator available, but the compression condition alone does not select it.  With the Euclidean edge metric, $K_{\rm id}$ is the minimum-Frobenius-norm solution of the compression constraint.  This gives a useful hierarchy: the minimal square case $n_a=n_s=b_1$ gives the smallest channel count compatible with full harmonic compression, redundant cases leave design freedom, and the full-edge case contains the ideal projector as a canonical choice only after an additional selection principle is imposed.
\end{remark}

\begin{remark}[Colocated and noncolocated edge channels]
The theorem does not require sensors and actuators to be placed on the same edges.  If $R\subset E$ is a set of sensed edges, coordinate restriction is a map
\begin{equation}
        S_R\in\Lin(X,\R^{|R|}).
\end{equation}
If $A_0\subset E$ is a set of actuated edges, coordinate injection is a map
\begin{equation}
        C_{A_0}\in\Lin(\R^{|A_0|},X),
        \qquad
        \im C_{A_0}
        \subseteq
        X_{A_0}:=\operatorname{span}\{\delta_e:e\in A_0\}.
\end{equation}
The general rank conditions are
\begin{equation}
        \rank(S_RH)=b_1,
        \qquad
        \rank(H^\top C_{A_0})=b_1.
\end{equation}
The colocated Euclidean case is the special case $A_0=R$ and
\begin{equation}
        C_R=S_R^*,
\end{equation}
whose coordinate matrix is $S_R^\top$.  Then the two rank conditions reduce to $\rank(S_RH)=b_1$.  The ideal Hodge realization, by contrast, uses the global harmonic sensor $S=H^\top$ and the distributed actuator $C=H$, giving $CKS=-\kappa P_H$ with $K=-\kappa I_{b_1}$.
\end{remark}

\begin{remark}[Minimal harmonic rank versus local realization]
\label{rem:minimal-rank-local-realization}
The minimal channel count $b_1$ concerns the dimension of the harmonic compression, not the spatial support of an ideal realization.  The distinction is particularly transparent on the consistently oriented ring $C_N$, where every edge is cyclic and therefore
\begin{equation}
        \operatorname{esupp}(\cH)=E,
\end{equation}
although the harmonic space is only one-dimensional:
\begin{equation}
        b_1=1,
        \qquad
        \cH=\operatorname{span}\{h\},
        \qquad
        h=\frac{1}{\sqrt N}\mathbf 1_E,
        \qquad
        P_H=hh^\top.
\end{equation}
The ideal harmonic damping operator has the rank-one factorization
\begin{equation}
        -\kappa P_H
        =
        h(-\kappa)h^\top.
        \label{eq:ring-distributed-ideal-factorization}
\end{equation}
Thus one scalar harmonic measurement $h^\top f$ and one scalar actuator command suffice, but both channels are distributed over all ring edges.

If instead sensing and actuation are restricted to one physical edge $e$, put
\begin{equation}
        S_e=\delta_e^\top,
        \qquad
        C_e=\delta_e.
\end{equation}
Since $\delta_e^\top h=1/\sqrt N$, the rank conditions of Theorem~\ref{thm:finite-rank-feedback} hold, and the unique minimal square gain is
\begin{equation}
        K_e=-\kappa N.
\end{equation}
It gives exact harmonic compression,
\begin{equation}
        h^\top C_eK_eS_eh=-\kappa,
\end{equation}
but its full edge-space action is
\begin{equation}
        C_eK_eS_e=-\kappa N \delta_e\delta_e^\top\ne-\kappa P_H.
\end{equation}
In particular, a harmonic input produces the nonzero cut-space spillover
\begin{equation}
        P_CC_eK_eS_eh
        =
        -\kappa\sqrt N\,P_C\delta_e
        \ne0.
\end{equation}
Hence, if the channel budget is restricted to the minimal count $n_a=n_s=b_1=1$ and the channels must be colocated on one physical edge, exact local realization of the distributed projector is impossible on the ring.  Theorem~\ref{thm:finite-rank-feedback} still gives the desired harmonic compression for the linear edge-space normal form, but it does not preserve the rotationally symmetric nonlinear invariant trajectory.  The exactly solvable controlled ring trajectory in Subsection~\ref{subsec:exact-ring-trajectory} uses the distributed ideal factorization \eqref{eq:ring-distributed-ideal-factorization}.  Thus minimal harmonic rank, minimal local support, and exact realization of the ideal Hodge operator are distinct notions.
\end{remark}

\begin{remark}[Support obstruction for exact local realization]
Let $R,A_0\subseteq E$, let $S_R\in\Lin(X,\R^{|R|})$ be coordinate sensing on $R$, and let $C_{A_0}\in\Lin(\R^{|A_0|},X)$ be coordinate actuation on $A_0$.  Put
\begin{equation}
        L:=C_{A_0}KS_R\in\Lin(X).
\end{equation}
For canonical edge vectors $\delta_e,\delta_f$, one has
\begin{equation}
        \langle\delta_e,L\delta_f\rangle_X=0
        \qquad
        \text{whenever }e\notin A_0\text{ or }f\notin R.
        \label{eq:local-controller-matrix-support}
\end{equation}
Equivalently,
\begin{equation}
        \operatorname{msupp}(L)\subseteq A_0\times R.
        \label{eq:local-controller-support-inclusion}
\end{equation}
This is the precise meaning of saying that the coordinate matrix of $L$ has no nonzero output row associated with an unactuated edge and no nonzero input column associated with an unsensed edge.

Suppose now that the full operator identity
\begin{equation}
        C_{A_0}KS_R=-\kappa P_H
        \label{eq:exact-local-projector-identity}
\end{equation}
were to hold.  If $e\in E_{\rm cyc}=\operatorname{esupp}(P_H)$, then
\begin{equation}
        \langle\delta_e,P_H\delta_e\rangle_X
        =\norm{P_H\delta_e}^2>0
\end{equation}
by \eqref{eq:cyclic-edge-projector-test}.  If $e\notin A_0$, the left-hand side of \eqref{eq:exact-local-projector-identity} has zero $e$-th output coordinate for every input, contradicting the nonzero scalar above.  If $e\notin R$, it annihilates the input $\delta_e$, giving the same contradiction.  Therefore
\begin{equation}
        E_{\rm cyc}\subseteq A_0\cap R.
        \label{eq:cyclic-edges-must-be-sensed-actuated}
\end{equation}
Hence an exact realization of the global Hodge projector by local coordinate channels requires every cyclic edge to be both sensed and actuated.  Selecting only $b_1$ physical edges may realize the exact compression identity \eqref{eq:finite-channel-compression-target}, but generally not the full projector identity.  The obstruction disappears only when the channels themselves are distributed Hodge patterns rather than local edge coordinates.
\end{remark}

\begin{remark}[From existence to design]
The rank condition gives the minimal topological requirement for controlling the full harmonic sector.  It should not be read as saying that exactly $b_1$ physical channels are always the best engineering choice.  If $n_a>b_1$ or $n_s>b_1$, then the affine solution space \eqref{eq:finite-channel-affine-gain-space} contains additional degrees of freedom.  These may be used to minimize the non-ideal blocks
\begin{equation}
        P_CL_{\rm ch}P_C,
        \qquad
        P_CL_{\rm ch}H,
        \qquad
        H^\top L_{\rm ch}P_C,
\end{equation}
or to reduce gain size, improve conditioning, and increase robustness.  A finite-dimensional stability check is then performed on the full block matrix in \eqref{eq:finite-channel-block-matrix}.  If this matrix is Hurwitz, all cut-space disturbances caused by spillover are transient.  Thus topology gives the minimal rank requirement, while design may use redundant channels to approach the ideal Hodge thermostat as closely as the available sensor and actuator architecture permits.
\end{remark}

\begin{remark}[Minimal colocated channel sets and co-trees]
\label{rem:minimal-colocated-cotrees}
Let $G=(V,E)$ be connected, let
\[
        b_1=|E|-|V|+1,
\]
and let
\[
        H\in\Lin(\R^{b_1},X),
        \qquad
        \im H=\Ker B,
\]
be any coordinate map for the harmonic edge-flow space.  Consider a
minimal colocated coordinate architecture on an edge set
$R\subseteq E$ with $|R|=b_1$, with coordinate restriction $S_R$ and
coordinate injection $C_R=S_R^*$.  Put
\[
        T:=E\setminus R.
\]
Then the following statements are equivalent:
\begin{enumerate}[label=\textup{(\roman*)}]
\item
\[
        \rank(S_RH)=b_1;
\]
\item the selected edge coordinates determine every harmonic flow
uniquely;
\item no nonzero harmonic flow is supported entirely on $T$;
\item $T$ is a spanning tree of $G$.
\end{enumerate}

Indeed, the kernel of $S_RH$ represents precisely those flows in
$\Ker B$ that vanish on $R$ and are therefore supported on $T$.
Such a nonzero flow exists exactly when the subgraph $(V,T)$ contains
a cycle.  Since
\[
        |T|=|E|-b_1=|V|-1,
\]
acyclicity of $T$ is equivalent to $T$ being a spanning tree.  For
colocated channels, the actuator-rank condition is the same, because
\[
        H^\top C_R=(S_RH)^\top.
\]

Thus the minimal full-rank colocated channel sets are exactly the
co-trees, that is, the complements of spanning trees.  Denoting by
$N_{\mathrm{st}}(G)$ the number of spanning trees, the number of
admissible minimal architectures is therefore $N_{\mathrm{st}}(G)$
rather than the full combinatorial count
\[
        \binom{|E|}{b_1}.
\]
By the Matrix--Tree Theorem,
\[
        N_{\mathrm{st}}(G)=\det L_G^{(i)},
\]
where $L_G^{(i)}$ is any principal minor obtained by deleting one row
and column from the unweighted graph Laplacian.

The underlying co-tree characterization is classical in
cycle--cocycle theory and has also been used in PST-allocation studies
\cite{HuangPST2002}.  Its role here is to identify the minimal
colocated specialization of the finite-channel harmonic-compression
conditions.  This feasibility characterization is purely topological
and independent of the chosen harmonic basis or edge metric.  The
operating-point metric enters only at the subsequent design stage,
where admissible co-trees are compared through conditioning,
spillover, stability, or distance from the ideal thermostat.
\end{remark}

\begin{remark}[Reduced and operationally ranked harmonic subspaces]
\label{rem:reduced-operational-harmonic-subspaces}
One may control only a prescribed subspace
$\cH_k\subset\cH$ of dimension $k\le b_1$.  In a transmission-grid
interpretation, such a subspace may be selected from historically
observed and scenario-generated harmonic flow deviations, weighted by
thermal margins, losses, contingency severity, or other operational
risk measures.  If
\[
        H_k\in\Lin(\R^k,X),
        \qquad
        H_k^\top H_k=I_k,
        \qquad
        \im H_k=\cH_k,
\]
the finite-channel design objective becomes
\begin{equation}
        H_k^\top CKSH_k=-\kappa I_k,
\end{equation}
with the corresponding rank conditions
\begin{equation}
        \rank(SH_k)=k,
        \qquad
        \rank(H_k^\top C)=k.
\end{equation}
The target dimension must therefore be adapted to the effective sensor
and actuator ranks, rather than merely to the number of physical
devices.  An asymmetric architecture with $n_s>n_a\ge k$ may be
advantageous when sensing channels are less costly: additional sensors
can improve conditioning, noise robustness, and redundancy, although
they cannot compensate for insufficient actuator rank.

Strategic PST or FACTS placement and modal channel selection are
established engineering topics \cite{HuangPST2002,Sadikovic2006}.  The
contribution here is not the placement problem per se, but its
basis-independent organization as the successive selection of an
operationally relevant harmonic target subspace, verification of
sensor and actuator sufficiency on that subspace, and design of a
compressed damping law together with spillover and full-system
stability diagnostics.
\end{remark}

\begin{remark}[The ideal thermostat as a design benchmark]
\label{rem:ideal-thermostat-design-benchmark}
The distributed ideal operator
\[
        L_{\rm id}=-\kappa P_H
\]
provides a canonical benchmark even when its global realization is
physically or economically unavailable.  Let
$\cH_k\subseteq\cH$ be a selected harmonic target subspace, let
$P_{\cH_k}$ be its orthogonal projector, and put
$L_{\rm ch}=CKS$.  Then
\begin{equation}
        L_{\rm ch}-L_{\rm id}
        =
        \bigl(L_{\rm ch}+\kappa P_{\cH_k}\bigr)
        +
        \kappa\bigl(P_H-P_{\cH_k}\bigr).
        \label{eq:finite-channel-ideal-benchmark-gap}
\end{equation}
The first term is the realization gap relative to the selected partial
ideal: it contains compression error and non-ideal Hodge blocks.  The
second is the truncation gap created by leaving the complementary
harmonic directions uncontrolled.

Nested target subspaces and expanding sensor--actuator architectures
may therefore be compared against the same ideal reference.  Such
improvement requires sufficient sensor and actuator rank and should be
assessed in an application-dependent energy- or risk-weighted metric.
Indeed, if $\cH_k\subsetneq\cH$, then
$\norm{P_H-P_{\cH_k}}_{\rm op}=1$, so the uniform operator norm does
not express gradual coverage of increasingly important modes.  The
ideal projector is not presumed affordable; it supplies a fixed
reference against which omission, spillover, stability margin,
robustness, and cost can be quantified.
\end{remark}

\begin{remark}[Higher-order finite-channel harmonic feedback]
\label{rem:higher-order-finite-rank}
The compression principle is not specific to edge-flow $1$-cochains.  Let $\mathcal K$ be a finite simplicial or cellular complex, let $C^k(\mathcal K)$ be the space of real $k$-cochains, and let
\begin{equation}
\cH^k=\Ker\Delta_k
\end{equation}
be the harmonic $k$-cochain space, with
\begin{equation}
\dim \cH^k=b_k.
\end{equation}
Choose an orthonormal harmonic basis
\begin{equation}
H_k\in\Lin(\R^{b_k},\R^{N_k}),
        \qquad
N_k=\dim C^k(\mathcal K).
\end{equation}
Let
\begin{equation}
\mathsf S_k:C^k(\mathcal K)\to\R^{n_s}
\end{equation}
be a finite family of $k$-cochain sensors, and let
\begin{equation}
\mathsf C_k:\R^{n_a}\to C^k(\mathcal K)
\end{equation}
be a finite family of $k$-cochain actuators.  The full harmonic $k$-sector is observable and controllable through these channels if
\begin{equation}
\rank(\mathsf S_kH_k)=b_k,
        \qquad
\rank(H_k^*\mathsf C_k)=b_k.
\end{equation}
Under these rank conditions, there exists $\mathsf K_k\in\Lin(\R^{n_s},\R^{n_a})$ such that
\begin{equation}
        H_k^*\mathsf C_k\mathsf K_k\mathsf S_kH_k=-\kappa I_{b_k}.
\end{equation}
Thus the $k$-th Betti number is again a controller-rank count: to damp the full $k$-th cohomological sector, the sensors must separate all harmonic $k$-modes and the actuators must excite all harmonic $k$-directions.  If only a prescribed subspace is to be controlled, the same construction applies with $H_k$ replaced by a basis of that subspace.
\end{remark}

\section{A hand-computable linear theta-network edge-flow model}
This section records a normalized linear edge-flow example before electrical state dynamics are introduced.  Its role is algebraic: it displays, by hand, how useful through-flow and internal circulation are separated by the Hodge decomposition, and how an ideal or finite-channel controller acts on those coordinates.

\begin{example}[Theta network: useful transfer and internal circulation]
\label{ex:theta-edgeflow}
The theta graph is the smallest algebraic model in which useful transfer and internal circulation separate visibly.  It may be read as three parallel transfer corridors between a source and a load.  With all edges reference-oriented from the source to the load, the scalar $i_1+i_2+i_3$ is the net source--load transfer.  The target $i_*=(1,1,1)^\top$ used below is therefore the symmetric no-preference transfer of total size $3$, shared equally by the three lines.  The zero-sum plane consists of internal redistributions that do not change the source--load balance.  These are precisely the loop-flow components that the thermostat is meant to remove.

The theta graph consists of two vertices joined by three parallel edges.  Its edge space is $\R^3$.  With all edges reference-oriented from the left vertex to the right vertex, a flow is
\begin{equation}
i=(i_1,i_2,i_3)^\top.
\end{equation}
The incidence matrix is
\begin{equation}
        B=
        \begin{pmatrix}
        1&1&1\\
        -1&-1&-1
        \end{pmatrix}.
\end{equation}
Thus $\rank B=1$ and the harmonic space is the right nullspace
\begin{equation}
\cH=\Ker B=\{i\in\R^3:i_1+i_2+i_3=0\}.
\end{equation}
The singular value decomposition gives a canonical rank-revealing linear algebra decomposition of this incidence matrix.  One convenient choice is
\begin{equation}
        B=U\Sigma_BV^\top,
\end{equation}
where
\begin{equation}
U=\frac1{\sqrt2}
\begin{pmatrix}
1&1\\
-1&1
\end{pmatrix},
\qquad
\Sigma_B=
\begin{pmatrix}
\sqrt6&0&0\\
0&0&0
\end{pmatrix},
\end{equation}
and
\begin{equation}
V^\top=
\begin{pmatrix}
\frac1{\sqrt3}&\frac1{\sqrt3}&\frac1{\sqrt3}\\[1mm]
\frac1{\sqrt2}&-\frac1{\sqrt2}&0\\[1mm]
\frac1{\sqrt6}&\frac1{\sqrt6}&-\frac2{\sqrt6}
\end{pmatrix}.
\end{equation}
The first row of $V^\top$ is the normalized through-flow coordinate functional
\begin{equation}
g^\top=\frac1{\sqrt3}(1,1,1).
\end{equation}
The corresponding column vector
\begin{equation}
g=\frac1{\sqrt3}(1,1,1)^\top
\end{equation}
is the equal-sharing through-flow direction in the edge-flow space $\R^3$, whose three coordinate axes correspond to the three reference-oriented edges.  Thus this is not a spatial direction in the drawing of the graph.  Applying the direction $g$ means placing equal oriented flow on all three edges.  The last two rows of $V^\top$ give harmonic coordinate functionals.  Equivalently,
\begin{equation}
        V^\top=
        \begin{pmatrix}
        g^\top\\ H^\top
        \end{pmatrix},
        \qquad
        H=
        \begin{pmatrix}
        \frac1{\sqrt2}&\frac1{\sqrt6}\\[1mm]
        -\frac1{\sqrt2}&\frac1{\sqrt6}\\[1mm]
        0&-\frac2{\sqrt6}
        \end{pmatrix}.
\end{equation}
Thus
\begin{equation}
        H^\top H=I_2,
        \qquad
        \im H=\Ker B.
\end{equation}
The two column vectors of $H$ are
\begin{equation}
h_1=\frac{1}{\sqrt2}(1,-1,0)^\top,
        \qquad
        h_2=\frac{1}{\sqrt6}(1,1,-2)^\top,
\end{equation}
and therefore
\begin{equation}
\R^3=\operatorname{span}\{g\}\oplus\operatorname{span}\{h_1,h_2\}.
\end{equation}
Here the word ``direction'' always refers to a direction in edge-flow coordinate space.  In the graph picture, $h_1$ means equal and opposite flows on edges $1$ and $2$, with no flow on edge $3$.  Likewise, $h_2$ means equal forward flows on edges $1$ and $2$, compensated by twice the backward flow on edge $3$.  Both patterns have zero coordinate sum and hence no net source--load transfer.
The corresponding projections are explicit:
\begin{equation}
        P_g=gg^\top=\frac13
        \begin{pmatrix}
        1&1&1\\1&1&1\\1&1&1
        \end{pmatrix},
        \qquad
        P_H=HH^\top=I-P_g
        =\frac13
        \begin{pmatrix}
        2&-1&-1\\-1&2&-1\\-1&-1&2
        \end{pmatrix}.
\end{equation}
The particular SVD basis in $\Ker B$ is not unique, because the zero singular value has multiplicity two.  Replacing $H$ by $HQ$, $Q\in O(2)$, changes the harmonic coordinates but leaves the subspace $\Ker B$, the projection $P_H=HH^\top$, and the rank tests below unchanged.

\paragraph{Reading off sensors and actuators.}
The SVD makes the finite-channel construction completely explicit.  The ideal full-information harmonic sensor and actuator are
\begin{equation}
        S_{\rm id}=H^\top,
        \qquad
        C_{\rm id}=H,
        \qquad
        K_{\rm id}=-\kappa I_2.
\end{equation}
Then
\begin{equation}
        S_{\rm id}H=I_2,
        \qquad
        H^\top C_{\rm id}=I_2,
\end{equation}
so the finite-channel rank conditions are automatically satisfied and the induced harmonic coordinate equation is $\dot c=-\kappa c$.

A sparse physical realization can be obtained by measuring and actuating only two of the three edges.  For instance, choose
\begin{equation}
        S_{12}=R_{12}=
        \begin{pmatrix}
        1&0&0\\0&1&0
        \end{pmatrix},
        \qquad
        C_{12}=E_{12}=
        \begin{pmatrix}
        1&0\\0&1\\0&0
        \end{pmatrix}.
\end{equation}
Then
\begin{equation}
        S_{12}H=
        \begin{pmatrix}
        \frac1{\sqrt2}&\frac1{\sqrt6}\\[1mm]
        -\frac1{\sqrt2}&\frac1{\sqrt6}
        \end{pmatrix},
        \qquad
        H^\top C_{12}=
        \begin{pmatrix}
        \frac1{\sqrt2}&-\frac1{\sqrt2}\\[1mm]
        \frac1{\sqrt6}&\frac1{\sqrt6}
        \end{pmatrix}.
\end{equation}
Both matrices have determinant $1/\sqrt3$ in absolute value.  Hence
\begin{equation}
        \rank(S_{12}H)=2,
        \qquad
        \rank(H^\top C_{12})=2.
\end{equation}
The gain prescribed by Theorem~\ref{thm:finite-rank-feedback} is
\begin{equation}
        K_{12}
        =-
        \kappa
        (H^\top C_{12})^{-1}(S_{12}H)^{-1}
        =-
        \kappa
        \begin{pmatrix}
        2&1\\1&2
        \end{pmatrix}.
\end{equation}
Consequently,
\begin{equation}
        H^\top C_{12}K_{12}S_{12}H=-\kappa I_2.
\end{equation}
Thus the two measured edge currents separate the two harmonic coordinates, and the two actuated edge channels excite the two harmonic directions.  By symmetry, any two of the three edges give an equivalent minimal sparse realization; one edge alone would have rank one and cannot control both loop-flow coordinates.

The topological thermostat is
\begin{equation}
\dot i=-\alpha P_g(i-i_*)-\kappa P_Hi.
\end{equation}
It preserves the through-flow target and damps the two loop-current coordinates.  This is the three-dimensional version of the general theorem.

The unprojected comparison dynamics are exactly analogous to the IEEE 14-bus benchmark.  With $E_1=\operatorname{diag}(1,0,0)$, local damping is
\begin{equation}
\dot i=-\alpha P_g(i-i_*)-\kappa E_1i,
\end{equation}
global damping is
\begin{equation}
\dot i=-\alpha P_g(i-i_*)-\kappa i,
\end{equation}
and the topological thermostat is
\begin{equation}
\dot i=-\alpha P_g(i-i_*)-\kappa P_Hi.
\end{equation}
The point of this example is that all projections are explicit.  The local edge damper is not aligned with the decomposition.  The global damper is aligned with neither the objective nor the useful transfer target.  The topological thermostat is exactly aligned with the cycle-space component.

\paragraph{Closed-form current evolution.}
The theta comparison is deliberately an elementary, exactly solvable linear model.  Put
\begin{equation}
        h_0=i(0)-i_*\in\cH
\end{equation}
and define
\begin{equation}
        D_{\rm none}=0,
        \qquad
        D_{\rm loc}=E_1,
        \qquad
        D_{\rm glob}=I,
        \qquad
        D_{\rm top}=P_H.
\end{equation}
For each comparison law the current satisfies the affine linear equation
\begin{equation}
        \dot i=A_j i+b,
        \qquad
        A_j=-\alpha P_g-\kappa D_j,
        \qquad
        b=\alpha i_*.
\end{equation}
Consequently, all four trajectories are available without time discretization:
\begin{equation}
        \begin{pmatrix}i_j(t)\\1\end{pmatrix}
        =
        \exp\!\left[
        t
        \begin{pmatrix}
        A_j&b\\
        0&0
        \end{pmatrix}
        \right]
        \begin{pmatrix}i(0)\\1\end{pmatrix}.
\end{equation}
For three of the four laws this formula reduces immediately to
\begin{align}
        i_{\rm none}(t)
        &=i_*+h_0,\\
        i_{\rm top}(t)
        &=i_*+e^{-\kappa t}h_0,\\
        i_{\rm glob}(t)
        &=
        \left(
        \frac{\alpha}{\alpha+\kappa}
        +\frac{\kappa}{\alpha+\kappa}
        e^{-(\alpha+\kappa)t}
        \right)i_*
        +e^{-\kappa t}h_0.
\end{align}
The local-damping trajectory is given by the same matrix-exponential formula.  It preserves the uncontrolled difference
\begin{equation}
        i_2(t)-i_3(t)=d_0:=i_2(0)-i_3(0)
\end{equation}
and converges to
\begin{equation}
        i_{\rm loc,\infty}
        =
        \left(
        0,
        \frac{3+d_0}{2},
        \frac{3-d_0}{2}
        \right)^\top.
\end{equation}
Thus no loop control leaves the initial circulation unchanged, local damping leaves an uncontrolled harmonic imbalance, and global damping equalizes the currents only after attenuating the prescribed transfer.  The topological thermostat alone gives
\begin{equation}
        P_gi_{\rm top}(t)=i_*,
        \qquad
        P_Hi_{\rm top}(t)=e^{-\kappa t}h_0,
\end{equation}
so that the target current is preserved while every cyclic component is removed.  The plots below are evaluations of these exact affine flows.  Their purpose is to display the control objective in the smallest nontrivial network, before introducing the linearized and nonlinear electrical state equations.
\end{example}

\begin{remark}[Controller wiring in the theta graph]
In the colocated sparse realization above, the gain matrix $K_{12}$ can be read as a weighted wiring graph between the two sensor channels and the two actuator channels.  The entry $(K_{12})_{\ell j}$ is the gain from sensor $j$ to actuator $\ell$.  Thus the diagonal entries describe self-feedback on the colocated edge channels, while the off-diagonal entries describe cross-coupling between the two measured lines.  In explicit signal form,
\begin{equation}
        u_1=-\kappa(2i_1+i_2),
        \qquad
        u_2=-\kappa(i_1+2i_2).
\end{equation}
Hence each actuator uses both measured edge currents.

This wiring is not ad hoc.  For a colocated selected-edge set $R$ with coordinate sensor $S_R$ and actuator $C_R=S_R^\top$, the minimal square gain is
\begin{equation}
        K_R=-\kappa\bigl(S_RP_HS_R^\top\bigr)^{-1},
\end{equation}
whenever $S_RH$ is invertible.  The matrix $S_RP_HS_R^\top$ is the Gram matrix of the harmonic edge fingerprints $P_H\delta_e$, $e\in R$.  Thus the controller wiring is determined by the harmonic geometry of the selected edge channels.

For the theta network and $R=\{1,2\}$,
\begin{equation}
        S_{12}P_HS_{12}^\top
        =
        \frac13
        \begin{pmatrix}
        2&-1\\
        -1&2
        \end{pmatrix},
\end{equation}
and therefore
\begin{equation}
        K_{12}
        =
        -\kappa
        \begin{pmatrix}
        2&1\\
        1&2
        \end{pmatrix}.
\end{equation}
Equivalently, if edge $3$ is used as the tree edge, the selected edges define the two fundamental cycle vectors
\begin{equation}
        z_1=(1,0,-1)^\top,
        \qquad
        z_2=(0,1,-1)^\top,
\end{equation}
whose Gram matrix is
\begin{equation}
        Z^\top Z=
        \begin{pmatrix}
        2&1\\
        1&2
        \end{pmatrix}.
\end{equation}
The off-diagonal coupling in $K_{12}$ is therefore the algebraic trace of the fact that the two selected fundamental cycles share the unactuated edge.  Similar Gram-matrix interpretations are available for larger colocated realizations, but here the observation is used only to clarify the elementary theta example.
\end{remark}

The closed-form comparison in Figure~\ref{fig:theta-comparison} uses the normalized edge-flow model of Example~\ref{ex:theta-edgeflow}.  The theta-network reproducibility script evaluates the explicit formulas above and the augmented matrix exponential for local damping; it does not use a time-stepping ODE solver.

\begin{figure}[H]
\centering
\begin{subfigure}{0.48\textwidth}
\centering
\includegraphics[width=\textwidth]{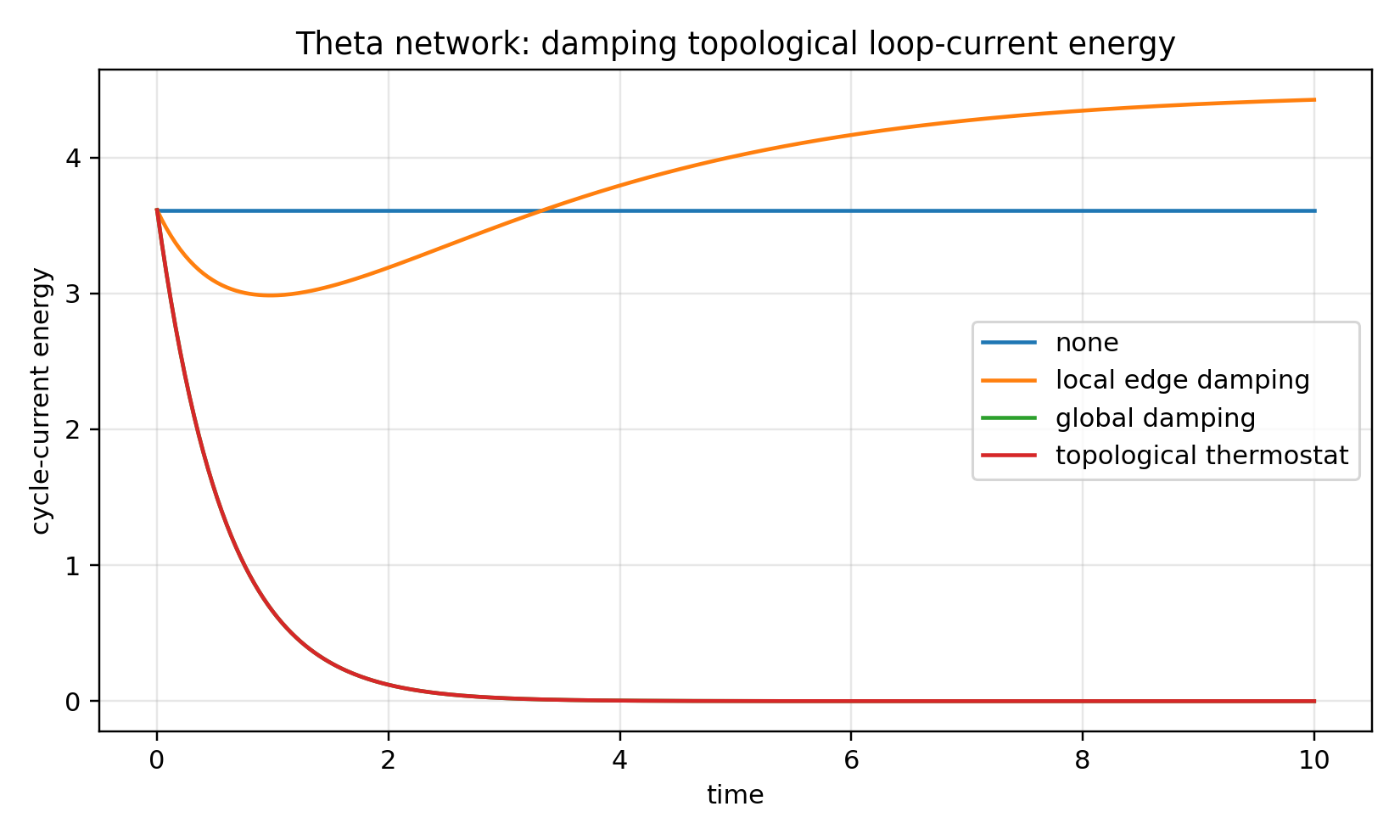}
\caption{Cycle energy.}
\end{subfigure}\hfill
\begin{subfigure}{0.48\textwidth}
\centering
\includegraphics[width=\textwidth]{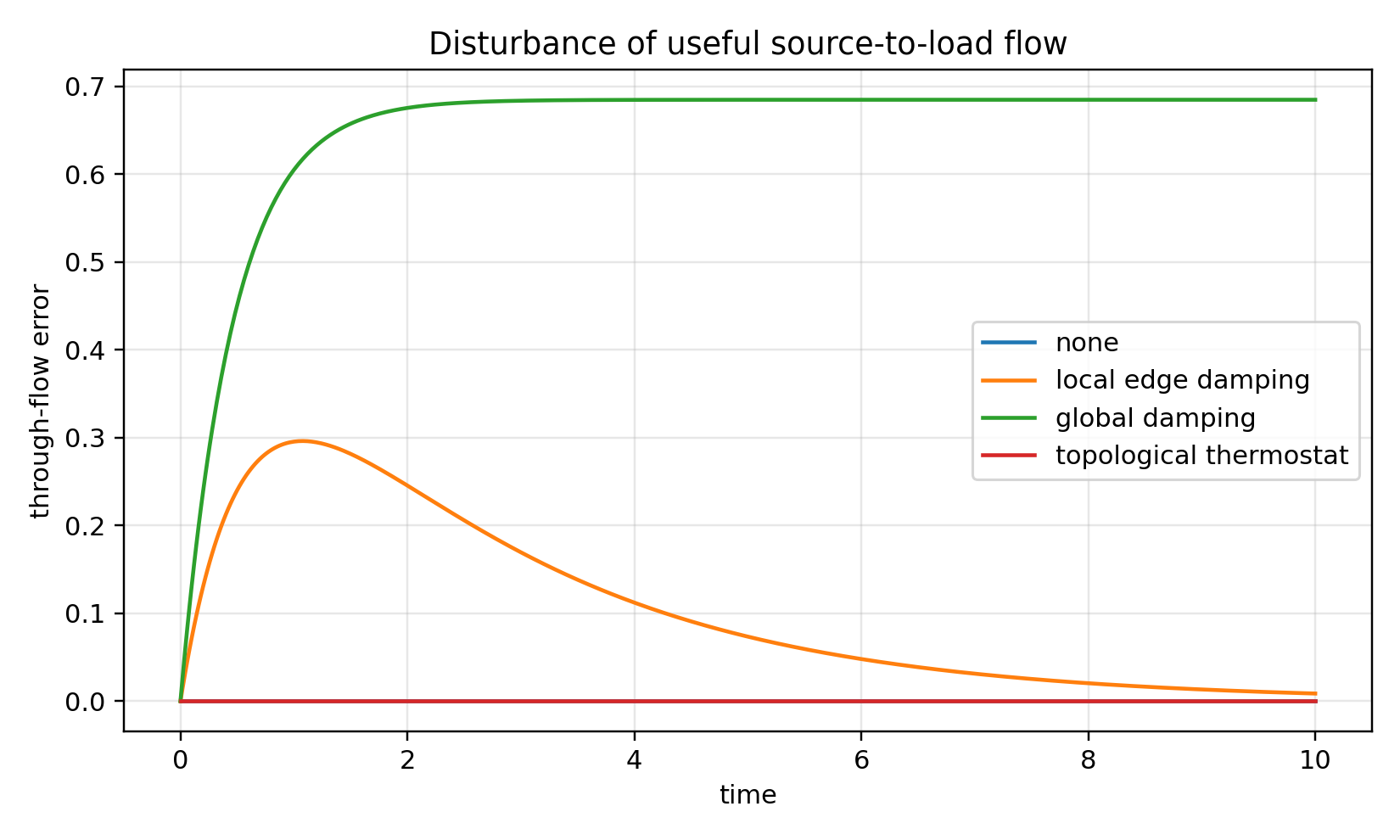}
\caption{Through-flow error.}
\end{subfigure}
\caption{Closed-form theta-network comparison.  Global damping and the topological thermostat have the same harmonic decay, while no loop control and the topological thermostat both preserve the initially correct cut component.  Only the topological thermostat combines full harmonic damping with the prescribed transfer target.}
\label{fig:theta-comparison}
\end{figure}

\begin{figure}[H]
\centering
\includegraphics[width=0.75\textwidth]{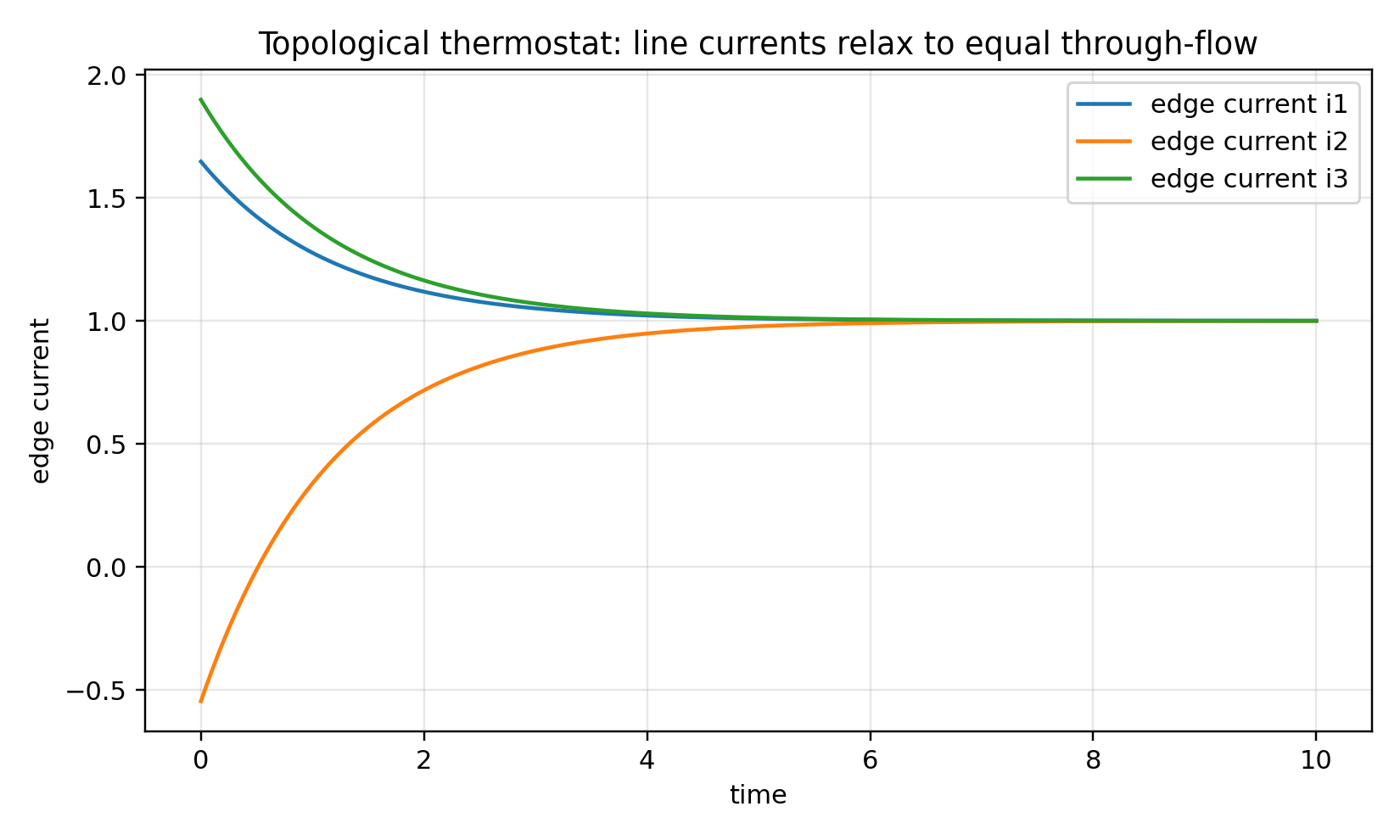}
\caption{Closed-form theta-network trajectory: the cut component remains at the prescribed target while the harmonic component decays exponentially.}
\label{fig:theta-edges}
\end{figure}

For the plots we use the nondimensional gains $\alpha=1.3$ and $\kappa=0.85$, so the ideal law is
\begin{equation}
\dot i=
\begin{pmatrix}
-1.00&-0.15&-0.15\\
-0.15&-1.00&-0.15\\
-0.15&-0.15&-1.00
\end{pmatrix}i+
\begin{pmatrix}
1.3\\
1.3\\
1.3
\end{pmatrix}.
\end{equation}
This normalized system is not calibrated by physical line parameters; a calibrated theta model would replace the Euclidean projection by the corresponding weighted Hodge projection.

\section{Linear edge-current dynamics and circulating-current damping in meshed inverter microgrids}
The abstract topological thermostat of Theorem~\ref{thm:topological-thermostat} and the finite-channel compression theorem, Theorem~\ref{thm:finite-rank-feedback}, are closest to a physical model when the controlled quantity is already an edge variable.  Meshed inverter microgrids provide such a model problem.  In an inverter-rich islanded microgrid, power-electronic converters supply local loads through a network of lines.  A recognized control objective is to share power and regulate voltage while avoiding internal circulating currents between inverters or around loops.  Virtual-impedance methods address this objective by modifying the effective voltage--current relation of an inverter through feedback rather than by inserting a physical resistor or inductor \cite{VirtualImpedanceZhang2023,YangVirtualImpedance2025,KhanVirtualImpedance2024}.

The point of the following discussion is not to model switching electronics.  Instead, we start from a minimal line-current normal form and show how the Hodge decomposition turns circulating-current suppression into a precise feedback objective.  The examples are then ordered by increasing modelling content.  The triangular microgrid displays the single harmonic current coordinate.  The two-triangle graph displays the finite-channel sensor--actuator construction.  The final weighted parallel-inverter example relates the same mechanism to the kind of unequal-capacity circulating-current problem studied in virtual-impedance and droop-control work.

\subsection*{A line-current normal form for virtual-impedance control}
For inverter microgrids, it is more natural to work with line currents than with line powers.  Let
\begin{equation}
        i\in\R^m
\end{equation}
be the vector of line currents, measured for instance in amperes or in per-unit current.  A minimal linear line-current model is
\begin{equation}
        L\dot i=-Ri+B^\top v+u.
        \label{eq:microgrid-current-model}
\end{equation}
This equation is the graph form of Kirchhoff's voltage law for reference-oriented $RL$ lines with an added controllable line-voltage input.  For one reference-oriented line $e$, with current $i_e$, resistance $R_e>0$, inductance $L_e>0$, and control input $u_e$, the scalar relation has the form
\[
        L_e\dot i_e+R_e i_e=\Delta v_e+u_e.
\]
With the incidence convention used in this paper, tail entries are $+1$ and head entries are $-1$, hence
\[
        (B^\top v)_e=v_{\mathrm{tail}(e)}-v_{\mathrm{head}(e)}.
\]
Thus $B^\top v$ is the voltage drop in the reference direction, and stacking the scalar line equations over all lines gives \eqref{eq:microgrid-current-model}.  If one instead uses the opposite convention for voltage drop, the signs of the edge-current coordinate and the control input must be changed consistently.  The Hodge-theoretic content is independent of this bookkeeping choice.

Here $v\in\R^n$ is a nodal voltage vector, $R,L\in\Lin(X)$ are positive diagonal resistance and inductance operators, and $u\in X$ is an idealized line-voltage or virtual-impedance control input.  In physical units, $Ri$, $L\dot i$, $B^\top v$, and $u$ are all voltage-like edge quantities.  The model is deliberately minimal: it retains the line-current state and the incidence geometry, but suppresses switching dynamics, $dq$-coordinate inverter controls, shunt capacitances, saturation, delays, and device limits.  The contribution here is therefore not a new inverter model.  It is the use of this standard current-flow normal form as the carrier of a Hodge-selective feedback law.

Let $H\in\Lin(\R^{b_1},X)$ have orthonormal columns spanning $\Ker B$.  In the normalized case
\begin{equation}
        L=I,
        \qquad
        R=r_0I,
        \qquad r_0>0,
\end{equation}
model \eqref{eq:microgrid-current-model} becomes
\begin{equation}
        \dot i=-r_0 i+B^\top v+u.
        \label{eq:normalized-microgrid-current-model}
\end{equation}
Let $i_*\in\cC=\im B^\top$ be a prescribed load-serving current and put
\begin{equation}
        e=i-i_*=e_C+Hc,
        \qquad
        e_C=P_Ce,
        \qquad
        c=H^\top e=H^\top i.
        \label{eq:microgrid-current-error-splitting}
\end{equation}
Since $H^\top B^\top=(BH)^\top=0$ and $P_Hi_*=0$, projection of \eqref{eq:normalized-microgrid-current-model} gives the exact component equations
\begin{equation}
        \dot e_C
        =
        -r_0e_C+B^\top v-r_0i_*+P_Cu,
        \label{eq:microgrid-cut-current-equation}
\end{equation}
\begin{equation}
        \dot c=-r_0c+H^\top u.
        \label{eq:harmonic-current-reduction}
\end{equation}
Thus nodal voltage differences act only in the cut space and do not directly force the harmonic circulating-current coordinates.

At the reduced control level, the voltage, droop, or secondary-control layer may be represented by the cut-regulation law
\begin{equation}
        -r_0e_C+B^\top v-r_0i_*=-\alpha e_C,
        \qquad \alpha>0.
        \label{eq:microgrid-cut-regulation-closure}
\end{equation}
With this closure, \eqref{eq:normalized-microgrid-current-model} takes the Hodge-coordinate normal form
\begin{equation}
        \dot e
        =
        -\alpha P_Ce-r_0P_He+u.
        \label{eq:microgrid-current-error-normal-form}
\end{equation}
The ideal topological virtual-impedance law is
\begin{equation}
        u=-\kappa P_Hi=-\kappa P_He,
        \qquad \kappa>0.
        \label{eq:topological-virtual-impedance}
\end{equation}
Consequently,
\begin{equation}
        \dot e_C=-\alpha e_C,
        \qquad
        \dot c=-(r_0+\kappa)c.
        \label{eq:ideal-microgrid-hodge-coordinates}
\end{equation}
This is the current-flow specialization of Theorem~\ref{thm:topological-thermostat}; the passive line resistance contributes the additional harmonic decay rate $r_0$, while the topological feedback contributes $\kappa$.

A dynamic virtual-impedance realization introduces an actuator state $w$:
\begin{equation}
        u=w,
        \qquad
        \tau\dot w=-w-\kappa P_Hi,
        \qquad \tau>0.
\end{equation}
With
\begin{equation}
        c_u=H^\top w,
\end{equation}
the harmonic subsystem is
\begin{equation}
        \frac{d}{dt}
        \begin{pmatrix}
        c\\ c_u
        \end{pmatrix}
        =
        \begin{pmatrix}
        -r_0I_{b_1}&I_{b_1}\\
        -\kappa/\tau\,I_{b_1}&-1/\tau\,I_{b_1}
        \end{pmatrix}
        \begin{pmatrix}
        c\\ c_u
        \end{pmatrix}.
        \label{eq:microgrid-harmonic-dynamic-subsystem}
\end{equation}
This formula is the common current-flow subsystem underlying the triangle and two-triangle examples below.

The clean Euclidean splitting in \eqref{eq:microgrid-cut-current-equation}--\eqref{eq:harmonic-current-reduction} uses $L=I$ and $R=r_0I$.  For heterogeneous diagonal inductances and resistances, the operator $L^{-1}R$ need not preserve $\cC$ and $\cH$; an appropriate weighted splitting or explicit cut--harmonic coupling terms are then required.

The finite-channel realization is now an application of Theorem~\ref{thm:finite-rank-feedback}, rather than a separate algebraic construction.  Retain the notation of Section~\ref{sec:finite-edge-channel-realization}: $S$ is interpreted as a line-current error sensor, $C$ as a collection of virtual-impedance or controlled line-voltage directions, and $K$ as the sensor-to-actuator gain.  Put
\begin{equation}
        u=CKSe,
        \qquad
        \mathcal L=CKS\in\Lin(X),
\end{equation}
and denote by $\mathcal L_{CC}$, $\mathcal L_{CH}$, $\mathcal L_{HC}$, and $\mathcal L_{HH}$ the blocks defined as in \eqref{eq:finite-channel-block-definitions}.  Then \eqref{eq:microgrid-current-error-normal-form} is equivalent to
\begin{equation}
        \frac{d}{dt}
        \begin{pmatrix}e_C\\ c\end{pmatrix}
        =
        \begin{pmatrix}
        -\alpha I_{\cC}+\mathcal L_{CC}&\mathcal L_{CH}\\
        \mathcal L_{HC}&-r_0I_{b_1}+\mathcal L_{HH}
        \end{pmatrix}
        \begin{pmatrix}e_C\\ c\end{pmatrix}.
        \label{eq:microgrid-finite-channel-block-system}
\end{equation}
If
\begin{equation}
        \rank(SH)=b_1,
        \qquad
        \rank(H^\top C)=b_1,
\end{equation}
Theorem~\ref{thm:finite-rank-feedback} gives a gain for which
\begin{equation}
        \mathcal L_{HH}=H^\top CKSH=-\kappa I_{b_1}.
\end{equation}
The complete harmonic equation is therefore
\begin{equation}
        \dot c
        =
        \mathcal L_{HC}e_C-(r_0+\kappa)c.
        \label{eq:microgrid-finite-channel-harmonic-equation}
\end{equation}
Thus the actuator contribution has the prescribed harmonic compression, while autonomous harmonic damping along the full trajectory additionally requires the cut-to-harmonic term $\mathcal L_{HC}e_C$ to vanish.  The Betti number counts the independent circulating-current modes that must be seen and influenced; the selected sensors, actuator directions, and gain determine the remaining spillover blocks.

\subsection*{A triangular microgrid: one circulating-current mode}
The smallest genuinely meshed microgrid is a triangle
\begin{equation}
        G=C_3.
\end{equation}
Choose the reference orientation of the edges as
\begin{equation}
        1\to 2,
        \qquad
        2\to 3,
        \qquad
        3\to 1.
\end{equation}
For the line-current vector
\begin{equation}
        i=(i_1,i_2,i_3)^\top,
\end{equation}
the incidence matrix is
\begin{equation}
B=
\begin{pmatrix}
1&0&-1\\
-1&1&0\\
0&-1&1
\end{pmatrix}.
\end{equation}
The first Betti number is
\begin{equation}
        b_1(C_3)=1,
\end{equation}
and the cycle space is
\begin{equation}
        \Ker B=\operatorname{span}\{h\},
        \qquad
        h=\frac1{\sqrt3}(1,1,1)^\top.
\end{equation}
Thus
\begin{equation}
        P_H=hh^\top
        =
        \frac13
        \begin{pmatrix}
        1&1&1\\
        1&1&1\\
        1&1&1
        \end{pmatrix}.
\end{equation}
The scalar
\begin{equation}
        z(t)=h^\top i(t)
\end{equation}
is the average circulating current around the mesh, up to normalization.

For instance, prescribe the balanced nodal injection
\begin{equation}
        p=(1,-1,0)^\top.
\end{equation}
The minimum-norm cycle-free current satisfying $Bi_*=p$ is
\begin{equation}
        i_*=\left(\frac23,-\frac13,-\frac13\right)^\top.
\end{equation}
It satisfies $P_Hi_*=0$.  Thus $i_*$ is the load-serving current with no circulating-current contamination.  An initial condition
\begin{equation}
        i(0)=i_*+z_0h
\end{equation}
has the same nodal injection, because $Bh=0$, but contains a pure circulating-current component.

The ideal normalized current-flow equation is
\begin{equation}
        \dot i
        =
        -\alpha P_C(i-i_*)-(r_0+\kappa)P_Hi.
\end{equation}
It is the one-cycle specialization of \eqref{eq:microgrid-current-error-normal-form}--\eqref{eq:topological-virtual-impedance}.  In Hodge coordinates,
\begin{equation}
        \dot z=-(r_0+\kappa)z,
\end{equation}
while the cut component tracks $i_*$.  If the normalized dynamic virtual-impedance model is used instead, put $z_u:=h^\top w$.  Then \eqref{eq:microgrid-harmonic-dynamic-subsystem} reduces to the two-dimensional subsystem
\begin{equation}
        \frac{d}{dt}
        \begin{pmatrix}
        z\\ z_u
        \end{pmatrix}
        =
        \begin{pmatrix}
        -r_0&1\\
        -\kappa/\tau&-1/\tau
        \end{pmatrix}
        \begin{pmatrix}
        z\\ z_u
        \end{pmatrix}.
        \label{eq:triangle-harmonic-subsystem}
\end{equation}
In the instantaneous virtual-impedance limit $w=-\kappa P_Hi$, this reduces to $\dot z=-(r_0+\kappa)z$.

Since $b_1=1$, one independent sensor and one independent actuator channel are sufficient for the full topological sector.  For example, measuring the current error and actuating on edge $1$ gives
\begin{equation}
        Se=e_1^\top e,
        \qquad
        C\xi=\xi e_1.
\end{equation}
Since
\begin{equation}
        Sh=\frac1{\sqrt3},
        \qquad
        h^\top C=\frac1{\sqrt3},
\end{equation}
the rank condition is satisfied.  Choosing
\begin{equation}
        K=-3\kappa
\end{equation}
gives
\begin{equation}
        h^\top CKS h=-\kappa.
\end{equation}
This is the one-loop compression identity of Theorem~\ref{thm:finite-rank-feedback}.  In the full normalized current model, the actuator adds the rate $\kappa$ to the passive harmonic decay $r_0$; any additional cut-to-harmonic forcing is described by \eqref{eq:microgrid-finite-channel-harmonic-equation}.

\subsection*{Two triangles sharing an edge: two circulating-current modes}
The minimally extended simple graph with two independent circulating-current modes is obtained by taking two triangles sharing one edge.  This example is small enough to display the finite-channel sensor--actuator mechanism of Theorem~\ref{thm:finite-rank-feedback} completely by hand, while still having a non-scalar harmonic sector.

Let the vertices be $1,2,3,4$ and choose the reference orientation of the five edges as
\begin{equation}
        e_1:1\to2,
        \quad
        e_2:2\to3,
        \quad
        e_3:3\to1,
        \quad
        e_4:1\to4,
        \quad
        e_5:4\to2.
\end{equation}
Then
\begin{equation}
        |V|=4,
        \qquad
        |E|=5,
        \qquad
        b_1=5-4+1=2.
\end{equation}
The incidence matrix is
\begin{equation}
B=
\begin{pmatrix}
1&0&-1&1&0\\
-1&1&0&0&-1\\
0&-1&1&0&0\\
0&0&0&-1&1
\end{pmatrix}.
\end{equation}
Two independent cycle vectors are
\begin{equation}
        \ell_1=(1,1,1,0,0)^\top,
        \qquad
        \ell_2=(-1,0,0,1,1)^\top.
\end{equation}
The first vector is the cycle around the triangle $1\to2\to3\to1$.  The second is the cycle around the triangle $1\to4\to2\to1$, with the shared edge $e_1$ traversed in the direction opposite to its reference orientation.  Thus the two vectors represent the two independent circulating-current directions.

An orthonormal basis of $\Ker B$ is obtained, for instance, by taking
\begin{equation}
        h_1=\frac1{\sqrt3}(1,1,1,0,0)^\top,
        \qquad
        h_2=\frac1{\sqrt{24}}(-2,1,1,3,3)^\top.
\end{equation}
Put
\begin{equation}
H=(h_1,h_2)
=
\begin{pmatrix}
1/\sqrt3&-1/\sqrt6\\
1/\sqrt3&1/\sqrt{24}\\
1/\sqrt3&1/\sqrt{24}\\
0&\sqrt6/4\\
0&\sqrt6/4
\end{pmatrix}.
\end{equation}
Then
\begin{equation}
        BH=0,
        \qquad
        H^\top H=I_2,
        \qquad
        P_H=HH^\top.
\end{equation}
Every harmonic current in this two-loop network has the form
\begin{equation}
        i_H=Hz,
        \qquad
        z=(z_1,z_2)^\top\in\R^2.
\end{equation}
Here $z$ is the coordinate vector of the circulating-current component in the chosen harmonic basis $H$.

For the nodal injection
\begin{equation}
        p=(1,-1,0,0)^\top,
\end{equation}
a minimum-norm cycle-free load-serving current is
\begin{equation}
        i_*=
        \left(
        \frac12,-\frac14,-\frac14,\frac14,\frac14
        \right)^\top,
        \qquad
        Bi_*=p,
        \qquad
        P_Hi_*=0.
\end{equation}
An initial condition
\begin{equation}
        i(0)=i_*+a_0h_1+b_0h_2
\end{equation}
therefore has the same nodal balance $p$ as $i_*$, because $Bh_1=Bh_2=0$, but it contains two independent circulating-current components.

\paragraph{What the sensors see.}
Choose two line-current-error sensors, on edges $e_1$ and $e_4$.  Thus
\begin{equation}
        Se=
        \begin{pmatrix}
        (i-i_*)_1\\
        (i-i_*)_4
        \end{pmatrix},
        \qquad
        S=
        \begin{pmatrix}
        1&0&0&0&0\\
        0&0&0&1&0
        \end{pmatrix}.
\end{equation}
For a harmonic current error $e_H=Hz$, the measured vector is
\begin{equation}
        y=Se_H=SHz.
\end{equation}
In the present basis,
\begin{equation}
        SH=
        \begin{pmatrix}
        1/\sqrt3 & -1/\sqrt6\\
        0 & \sqrt6/4
        \end{pmatrix}.
        \label{eq:two-triangle-SH}
\end{equation}
This matrix is invertible.  Hence the two selected sensors do not measure the harmonic coordinates $z$ directly, but they do measure an invertible mixture of them.  No nonzero circulating-current mode is invisible to these two line sensors.

\paragraph{What the actuators can create.}
Use colocated line-actuation channels on the same two edges, so
\begin{equation}
        C=S^\top
        =
        \begin{pmatrix}
        1&0\\
        0&0\\
        0&0\\
        0&1\\
        0&0
        \end{pmatrix}.
\end{equation}
If $\xi=(\xi_1,\xi_2)^\top$ is the actuator command, then $C\xi$ is an edge-space correction supported on $e_1$ and $e_4$.  Its effect on harmonic coordinates is
\begin{equation}
        H^\top C\xi
        =
        (H^\top C)\xi,
\end{equation}
where
\begin{equation}
        H^\top C
        =
        (SH)^\top
        =
        \begin{pmatrix}
        1/\sqrt3 & 0\\
        -1/\sqrt6 & \sqrt6/4
        \end{pmatrix}.
        \label{eq:two-triangle-HTC}
\end{equation}
This matrix is also invertible.  Thus the two selected actuators can produce an arbitrary velocity in the two harmonic coordinates.

\paragraph{Coordinate matching.}
The actuator contribution has the form
\begin{equation}
        u=CKSe.
\end{equation}
On a purely harmonic current error $e=Hz$, its harmonic compression is
\begin{equation}
        H^\top u
        =H^\top CKSHz.
\end{equation}
To contribute the target harmonic damping $-\kappa z$, the gain must satisfy
\begin{equation}
        H^\top CKS H=-\kappa I_2.
        \label{eq:two-triangle-matching-condition}
\end{equation}
Since $SH$ converts harmonic coordinates into sensor readings, and $H^\top C$ converts actuator commands into harmonic-coordinate velocities, the matching gain is
\begin{equation}
        K
        =-
        \kappa
        (H^\top C)^{-1}(SH)^{-1}.
\end{equation}
In the colocated case $C=S^\top$, this can be written as
\begin{equation}
        K
        =-
        \kappa
        \left((SH)(SH)^\top\right)^{-1}.
\end{equation}
Using \eqref{eq:two-triangle-SH}, one obtains
\begin{equation}
        (SH)(SH)^\top
        =
        \begin{pmatrix}
        1/2&-1/4\\
        -1/4&3/8
        \end{pmatrix},
        \qquad
        \left((SH)(SH)^\top\right)^{-1}
        =
        \begin{pmatrix}
        3&2\\
        2&4
        \end{pmatrix}.
\end{equation}
Thus
\begin{equation}
        K=-\kappa
        \begin{pmatrix}
        3&2\\
        2&4
        \end{pmatrix}.
        \label{eq:two-triangle-K-explicit}
\end{equation}
Substitution gives
\begin{equation}
\begin{aligned}
        H^\top CKS H
        &=(SH)^\top
        \left[-\kappa\left((SH)(SH)^\top\right)^{-1}\right]
        (SH) \\
        &=-\kappa I_2.
\end{aligned}
\end{equation}
Thus the actuator compression contributes $-\kappa z$ to the harmonic equation.  In the full normalized current model with zero cut error,
\begin{equation}
        \dot z_1=-(r_0+\kappa)z_1,
        \qquad
        \dot z_2=-(r_0+\kappa)z_2.
\end{equation}
For a general current error the additional term $\mathcal L_{HC}e_C$ is present, as in \eqref{eq:microgrid-finite-channel-harmonic-equation}.  In the dynamic virtual-impedance model, the same two harmonic coordinates enter \eqref{eq:microgrid-harmonic-dynamic-subsystem} with $b_1=2$.

This example displays the finite-channel theorem without hidden notation.  The sensors $S$ read an invertible mixture $SHz$ of the harmonic coordinates.  The actuators $C$ create harmonic-coordinate velocities through $H^\top C$.  The matrix $K$ is exactly the coordinate-matching matrix that decodes the measured harmonic mixture, inserts the desired damping factor $-\kappa$, and re-encodes the result as actuator commands.  This is the two-loop analogue of Theorem~\ref{thm:finite-rank-feedback} and of the rank-seven IEEE construction below.

\subsection*{A weighted parallel-inverter example}
The preceding examples are purely topological current-flow models.  We now give a reduced parallel-inverter example closer to the circulating-current problems studied in the virtual-impedance literature.  Zhang et al. propose an adaptive virtual composite impedance and droop-adjustment strategy for circulating-current suppression in parallel inverter systems \cite{VirtualImpedanceZhang2023}.  The following model is not a reproduction of their PSCAD/EMTDC implementation.  It is a Hodge-coordinate abstraction of the same engineering issue: unequal inverter capacities and line impedances create current-sharing errors and internal circulating-current components, while virtual impedance supplies a voltage-like actuator.

Consider four parallel inverter output paths feeding a common bus.  The reduced graph has two nodes and four parallel edges, all reference-oriented from the inverter side to the common bus:
\begin{equation}
        B=
        \begin{pmatrix}
        1&1&1&1\\
        -1&-1&-1&-1
        \end{pmatrix}.
\end{equation}
The current vector is
\begin{equation}
        i=(i_1,i_2,i_3,i_4)^\top.
\end{equation}
Then
\begin{equation}
        \Ker B
        =
        \left\{
        i\in\R^4:
        i_1+i_2+i_3+i_4=0
        \right\},
        \qquad
        b_1=3.
\end{equation}
Thus there are three independent zero-net-current redistributions among the four inverter paths.  These are the harmonic circulating-current modes in this reduced graph.

For unequal inverter ratings, the desired no-circulation current sharing need not be the equal vector.  Let
\begin{equation}
        \rho=(4,3,2,1)^\top
\end{equation}
represent relative capacity or desired sharing weights.  If the total load current is
\begin{equation}
        I_{\mathrm{load}}=10,
\end{equation}
the desired sharing current is
\begin{equation}
        i_*
        =
        I_{\mathrm{load}}
        \frac{\rho}{\mathbf 1^\top\rho}
        =
        (4,3,2,1)^\top.
\end{equation}
It satisfies
\begin{equation}
        \mathbf 1^\top i_*=10.
\end{equation}
The weighted cut direction is $\operatorname{span}\{\rho\}$.  Equivalently, take the capacity metric matrix $\Sigma_\rho=\operatorname{diag}(\rho)$ and the weighted edge inner product
\begin{equation}
        \langle x,y\rangle_{\Sigma_\rho^{-1}}=x^\top \Sigma_\rho^{-1}y.
\end{equation}
The corresponding weighted harmonic projection onto $\Ker B$ is
\begin{equation}
        P_H^{\Sigma_\rho} i
        =
        i
        -
        \rho\frac{\mathbf 1^\top i}{\mathbf 1^\top\rho}.
        \label{eq:weighted-parallel-harmonic-projection}
\end{equation}
Indeed, $\mathbf 1^\top P_H^{\Sigma_\rho} i=0$, and $P_H^{\Sigma_\rho} i_*=0$.  Thus \eqref{eq:weighted-parallel-harmonic-projection} removes the desired capacity-weighted sharing component and keeps only the circulating-current mismatch.

For example, suppose that the instantaneous current is
\begin{equation}
        i=(5,2,2,1)^\top.
\end{equation}
It carries the same total current as $i_*$, but
\begin{equation}
        P_H^{\Sigma_\rho} i
        =
        i-\rho
        =
        (1,-1,0,0)^\top.
\end{equation}
This is a pure redistribution between the first two inverter paths.  It changes no net load current at the common bus.

A topological virtual-impedance law for this weighted sharing model is
\begin{equation}
        u_{\mathrm{top}}
        =
        -\kappa P_H^{\Sigma_\rho} i.
\end{equation}
Together with a droop or secondary-control layer that maintains the weighted cut target $i_*$, and with passive normalized line resistance $r_0>0$, the ideal current-flow normal form becomes
\begin{equation}
        \dot i
        =
        -\alpha P_C^{\Sigma_\rho}(i-i_*)
        -
        (r_0+\kappa)P_H^{\Sigma_\rho} i,
        \qquad
        P_C^{\Sigma_\rho}=I-P_H^{\Sigma_\rho}.
        \label{eq:weighted-parallel-inverter-ode}
\end{equation}
Consequently
\begin{equation}
        P_C^{\Sigma_\rho} i(t)\to i_*,
        \qquad
        P_H^{\Sigma_\rho} i(t)\to0.
\end{equation}
For the reproducible illustration in Figure~\ref{fig:weighted-parallel-inverter}, we take
\begin{equation}
        h_0=(1.0,-0.8,0.6,-0.8)^\top\in\Ker B,
        \qquad
        i(0)=i_*+h_0,
\end{equation}
and use $r_0=0.15$, $\kappa=1.2$.  Since the initial cut error is zero, the complete trajectory is available in closed form:
\begin{equation}
        i(t)=i_*+e^{-(r_0+\kappa)t}h_0,
        \qquad
        P_H^{\Sigma_\rho} i(t)=e^{-(r_0+\kappa)t}h_0.
\end{equation}
Thus the four inverter currents converge to the unequal sharing target while the circulating-current component decays.  Figure~\ref{fig:weighted-parallel-inverter} is generated by evaluating this formula on the plotting grid; no time-stepping ODE method is used.  Because the selected initial condition has zero cut error, the parameter $\alpha$ does not enter this particular trajectory, although it would regulate a nonzero cut-space deviation in \eqref{eq:weighted-parallel-inverter-ode}.

\begin{figure}[H]
\centering
\includegraphics[width=0.86\textwidth]{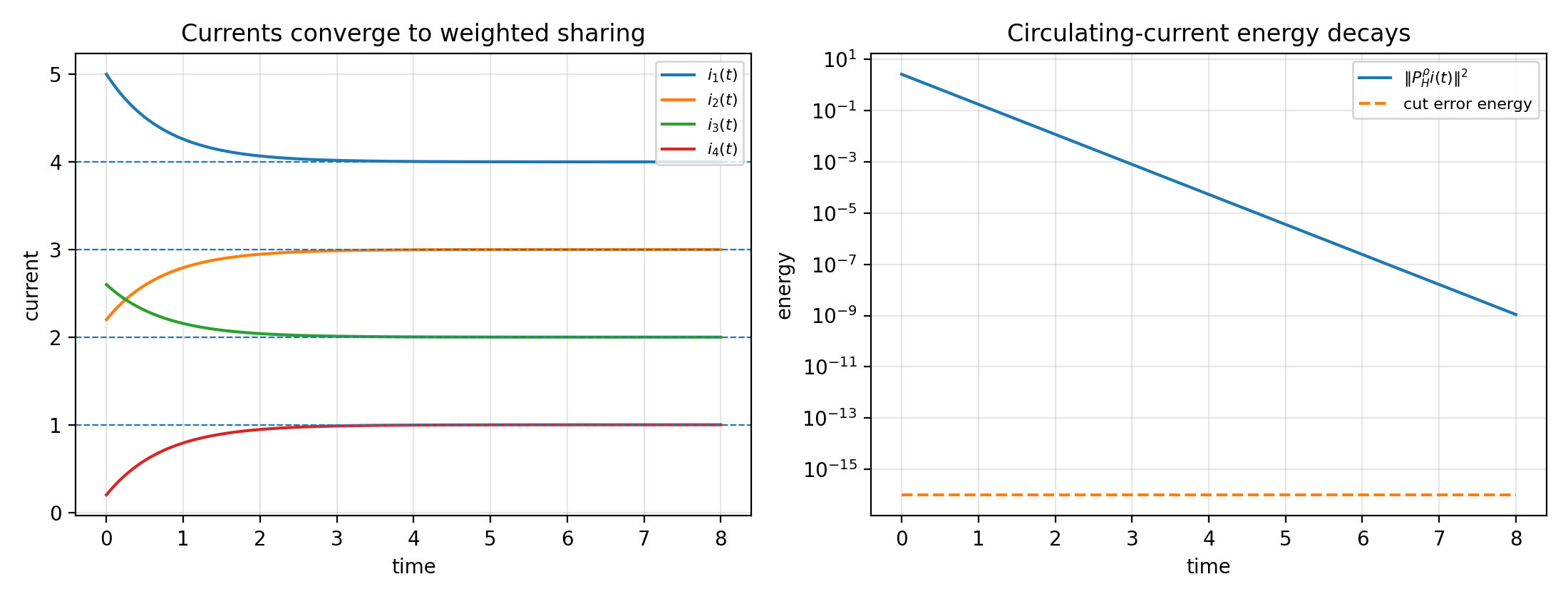}
\caption{Weighted parallel-inverter topological control.  The four line currents converge to the capacity-weighted sharing target while the weighted circulating-current component decays.}
\label{fig:weighted-parallel-inverter}
\end{figure}

In this reduced interpretation, the adaptive droop layer belongs to the weighted sharing or cut component, whereas virtual impedance supplies the line-current actuator used to damp the harmonic redistribution.  The Hodge formulation adds a structural design question: do the selected current measurements and virtual-impedance channels have full rank on the three-dimensional space $\Ker B$?  If not, some circulating-current mode is invisible or uncontrollable, independently of the local gain tuning.

The engineering model in \cite{VirtualImpedanceZhang2023} contains details suppressed here, including active--reactive power coupling, adaptive impedance setting, droop-coefficient adjustment, and electromagnetic-transient simulation.  The purpose of the present example is narrower: it shows how a Zhang-type circulating-current objective can be expressed in the current-flow normal form \eqref{eq:microgrid-current-model}, with the unwanted current identified as a weighted harmonic edge-current component.

\section{Transmission-grid loop flows and the IEEE 14-bus setting}
\label{sec:transmission-preparation}
Loop flows in meshed transmission networks motivate the line-flow application of the abstract thermostat.  A line-flow vector $f\in\R^m$ and a nodal injection vector $p\in\R^n$ are related by
\begin{equation}
        Bf=p,
        \label{eq:transmission-static-balance}
\end{equation}
where positive entries of $p$ represent net generation and negative entries net consumption.  The balance equation fixes the source--load requirement but does not determine the internal redistribution of flow, because any element of $\Ker B$ may be added without changing $p$.  Large circulating or loop flows may therefore occupy line capacity without contributing to net transfer \cite{ColettaLoopFlows,KorabOwczarek2016}.  Cycle-dependent redistribution also underlies Braess-type effects in power networks \cite{SchaeferBraess,TchuisseuBraessControl}.

At the physical level, line actuation may be interpreted through phase-shifting transformers, controllable series compensation, FACTS devices, HVDC links, or power-electronic interfaces \cite{Sadikovic2006,ENTSOEPST,SiemensPST,HitachiTCSC,ENTSOEHVDC}.  The paper uses idealized line-actuation coordinates rather than detailed device models.  A physical device changes the branch constitutive relation; after linearization, its first-order effect is an input direction in edge-flow space.  The finite-channel rank conditions of Section~8 test whether the selected measurements distinguish, and the selected devices excite, all independent harmonic flow modes.  They complement rather than replace classical modal observability, controllability, security, and device-placement criteria.

The principal transmission-grid example uses the MATPOWER IEEE 14-bus topology and branch reactances \cite{MATPOWER,MATPOWERcase14}.  Its twenty branches and fourteen buses give $b_1=7$.  The normalized branch coefficients, synthetic dynamic data, fixed seven-edge architecture, and weighted rank verification are specified together in Section~\ref{sec:ieee14-application}; the intervening swing section first develops the nonlinear closures and their tangent and energy analysis.

\section{Nonlinear line-actuated swing systems, harmonic-output linearization, and dissipation}
\subsection{Purpose and modelling levels}
The theta-network and inverter-microgrid sections take linear edge-flow or edge-current evolution equations as their starting models.  In a transmission grid, by contrast, line flow is a nonlinear output of bus phases and line-actuator variables.  This section therefore formulates two complete nonlinear closed-loop swing systems, one using the ideal weighted harmonic projector and one using finite-channel feedback.

Writing $z=(\theta,\omega,u)$, let $\mathcal F_{\rm id}$ and $\mathcal F_{\rm fc}$ denote the corresponding closed vector fields.  Target centering, together with $p=Bf_*$, makes
$z_*=(\theta_*,0,u_*)$ an equilibrium of both systems.  Their tangent evolution problems are obtained by evaluating the Fr\'echet derivatives of the complete closed vector fields at this equilibrium:
\[
        \dot\xi
        =\mathrm D\mathcal F_{\rm id}(z_*)\xi,
        \qquad
        \dot\xi
        =\mathrm D\mathcal F_{\rm fc}(z_*)\xi,
        \qquad
        \xi=(\vartheta,\nu,w).
\]
They are written explicitly in
\eqref{eq:section15-linearized-ideal-system} and
\eqref{eq:section15-linearized-finite-system}; the latter is expanded
componentwise in
\eqref{eq:section15-linearized-finite-system-expanded}.  Thus the
feedback laws are linearized together with the swing plant, rather
than appended after linearization.  After quotienting the uniform
phase-shift symmetry and restricting the actuator deviation to the
implemented invariant subspace, a Hurwitz reduced derivative invokes
the finite-dimensional principle of linearized stability
\cite{AmannODE} and yields local exponential stability of the
corresponding nonlinear equilibrium class.  The projected output
identities identify particular blocks of this derivative; they do not
replace the stability test for the complete reduced tangent system.

We use the variable terminology consistently throughout this section.  The
\emph{full state variables} are $z=(\theta,\omega,u)$.  The
\emph{effective edge-phase variables} are $(\eta,\omega)$, with
$\eta=B^\top\theta+u$.  The variables $(\nu,r_C,h)$ introduced in the
tangent analysis are called \emph{output-level Hodge variables}.  Finally,
the term \emph{reduced tangent state} is reserved for the quotient and
invariant-subspace restriction used to remove neutral symmetries and
redundant actuator directions before applying the principle of linearized
stability.

The analysis separates exact harmonic compression, the additional condition required for exact prescribed decay, and full reduced-state recovery.  The nonlinear part then exhibits a winding-one passive equilibrium, derives a controller-independent Bregman balance and its ideal and colocated finite-channel dissipation consequences, and ends with an exactly solvable controlled ring trajectory.  The IEEE 14 application is treated separately in the following section.

The use of swing equations and their relation to Kuramoto-type synchronization models is classical; see, for example, D{\"o}rfler and Bullo \cite{DorflerBullo2012}.  A synchronous operating point is a phase-locked state with zero frequency deviation and generally nonzero phase differences encoding active-power transfer.  The topological objective considered here is additional: preserve the balanced transfer while dissipating the harmonic component of the line-flow output.

\subsection{Nonlinear swing dynamics and two line-feedback closures}
\label{subsec:nonlinear-swing-closures}
Throughout this section, recall that $X=\R^E\cong\R^m$ is the edge space, with one coordinate for each reference-oriented transmission line.  Let $\theta\in\R^n$ be the vector of bus-voltage phase angles, $\omega\in\R^n$ the vector of frequency deviations, and $u\in X$ the ideal line-actuator variable.  At the present modelling level, $u_e$ is an additive controllable phase shift in the constitutive law of line $e$.

For a reference-oriented line $e:i\to j$, the component $(B^\top\theta)_e=\theta_i-\theta_j$ is the oriented phase-angle difference generated by the incident buses.  Thus $B^\top\theta\in\im B^\top$ is the exact edge-phase field induced by the bus phases; it is not itself a line-flow vector.  The effective phase difference seen by each line is
\begin{equation}
        \eta=B^\top\theta+u\in X.
        \label{eq:effective-edge-phase}
\end{equation}
The term $B^\top\theta$ is generated by nodal phase angles, whereas $u$ is an independent edge-level control coordinate.

The reduced lossless branch law assigns to $\eta_e$ the signed active-power flow $f_e=a_e\sin\eta_e$ in the chosen reference direction, where $a_e>0$ is the effective line-coupling coefficient.  Hence the nonlinear line-flow map
\begin{equation}
        f:\R^n\times X\longrightarrow X
        \label{eq:nonlinear-line-flow-map-space}
\end{equation}
is
\begin{equation}
        f(\theta,u)
        =
        \operatorname{diag}(a)\sin(B^\top\theta+u)
        =
        f(\eta)
        \in X.
        \label{eq:section15-nonlinear-line-flow-map}
\end{equation}
Although $\eta$ is represented by a vector in $X$, each component is physically an angle modulo $2\pi$.  Let
\begin{equation}
        \operatorname{pr}:\R\longrightarrow[-\pi,\pi)
\end{equation}
denote the principal representative modulo $2\pi$,
\begin{equation}
        \operatorname{pr}(x)
        =x-2\pi\left\lfloor\frac{x+\pi}{2\pi}\right\rfloor,
        \qquad x\in\R,
        \label{eq:principal-representative}
\end{equation}
and apply it componentwise to vectors.  The map is discontinuous at the branch-cut values $(2k+1)\pi$, while the angle-cohesive region below stays uniformly away from them.

Fix a synchronous operating point
\begin{equation}
        (\theta_*,0,u_*).
\end{equation}
Here synchrony means that the bus phases are phase-locked and the frequency deviation is zero.  Put
\begin{equation}
        \eta_*=B^\top\theta_*+u_*,
        \qquad
        f_*:=f(\theta_*,u_*).
\end{equation}
Thus $\eta_*$ is the vector of effective operating-point line phases and $f_*$ is the associated line-flow output.  For some $0<\gamma<\pi/2$, the operating point is called $\gamma$-angle-cohesive if
\begin{equation}
        \left|\operatorname{pr}(\eta_{*,e})\right|\le\gamma,
        \qquad e\in E.
        \label{eq:angle-cohesive-operating-point}
\end{equation}
This condition keeps every line on the strictly increasing branch of the sine law.  Consequently, the differential of the branch constitutive map at the operating point is the positive diagonal operator
\begin{equation}
        W_*
        :=\mathrm D_\eta f(\eta_*)
        =\operatorname{diag}\!\left(a_e\cos(\operatorname{pr}(\eta_{*,e}))\right)
        \in\Lin(X),
        \qquad W_*>0.
        \label{eq:operating-point-edge-sensitivity}
\end{equation}
Thus $W_*$ is the small-signal sensitivity converting effective line-phase variations into line-flow variations.

With the incidence convention fixed above, $Bf$ is the vector of net nodal power outflows.  Hence
\begin{equation}
        p=Bf_*
        \label{eq:section15-target-balance}
\end{equation}
is the steady nodal power-balance condition.  We additionally assume that the target flow has no harmonic loop-flow component in the operating-point weighted geometry:
\begin{equation}
        P_H^{W_*}f_*=0.
        \label{eq:section15-cycle-free-target}
\end{equation}
The weighted cut and harmonic projections associated with $W_*^{-1}$ are frozen at the selected operating point and are defined explicitly below.

Let $M,D\in\Lin(\R^n)$ be positive diagonal operators representing bus inertia and passive frequency damping.  In each closure below, the first equation is the phase--frequency kinematic relation, the second is the nodal swing balance, and the third specifies the line-actuator dynamics.  The first nonlinear closure uses the full weighted harmonic projector:
\begin{subequations}
\label{eq:section15-ideal-nonlinear-closed-loop}
\begin{align}
        \dot\theta
        &=\omega,
        \\
        M\dot\omega
        &=p-D\omega-Bf(\theta,u),
        \\
        \tau\dot u
        &=-\kappa W_*^{-1}P_H^{W_*}
        \bigl(f(\theta,u)-f_*\bigr).
\end{align}
\end{subequations}
Here $\tau>0$ is the actuator time scale and $\kappa>0$ is the harmonic feedback gain.  Since \eqref{eq:section15-cycle-free-target} holds, the subtraction of $f_*$ is redundant in the last equation, but it makes the regulated target explicit.

The second closure replaces the global projector by a finite-channel architecture.  Let
\begin{equation}
        S\in\Lin(X,\R^{n_s}),
        \qquad
        K\in\Lin(\R^{n_s},\R^{n_a}),
        \qquad
        C\in\Lin(\R^{n_a},X),
        \label{eq:section15-finite-map-spaces}
\end{equation}
where $S$ is the line-flow sensor map, $C$ contains the line-flow actuator directions, and $K$ maps sensor signals to actuator commands.  In canonical coordinates these maps have sizes $n_s\times m$, $n_a\times n_s$, and $m\times n_a$, respectively.  The finite-channel nonlinear closed system is
\begin{subequations}
\label{eq:section15-finite-nonlinear-closed-loop}
\begin{align}
        \dot\theta
        &=\omega,
        \\
        M\dot\omega
        &=p-D\omega-Bf(\theta,u),
        \\
        \tau\dot u
        &=W_*^{-1}CKS
        \bigl(f(\theta,u)-f_*\bigr).
\end{align}
\end{subequations}
As in Remark~\ref{rem:finite-channel-target-centering}, the finite-channel controller is applied to the target error rather than to the raw line flow.  The subtraction of $f_*$ guarantees that $(\theta_*,0,u_*)$ is an equilibrium without requiring the additional condition $CKSf_*=0$.

\begin{remark}[Two feedback ports and modelling status]
The passive term $-D\omega$ in \eqref{eq:section15-ideal-nonlinear-closed-loop} and \eqref{eq:section15-finite-nonlinear-closed-loop} acts through the bus-frequency equation.  It dissipates electromechanical motion and vanishes at a synchronous state with $\omega=0$.  The line controllers act through the equation for $u$, which enters the constitutive relation \eqref{eq:section15-nonlinear-line-flow-map}.  They therefore change the effective edge phases and reshape the line-flow output directly.

The ideal closure \eqref{eq:section15-ideal-nonlinear-closed-loop} assumes access to the complete weighted harmonic projection and a distributed actuator pattern.  The finite closure \eqref{eq:section15-finite-nonlinear-closed-loop} uses selected or filtered sensor and actuator channels and need reproduce only the desired harmonic compression after linearization.  Existing phase-shifting transformers, controllable series-compensation devices, FACTS devices, HVDC links, and power-electronic interfaces motivate such line-level actuation, but their detailed dynamics, limits, delays, and protection constraints are not represented here; these belong to the engineering limitations discussed after the main IEEE 14 application.
\end{remark}

\begin{remark}[Vertex and edge Hodge levels]
Geometrically, the swing variables $\theta$ and $\omega$ are $0$-cochains on the network vertices.  At the operating point their tangent restoring operator is the weighted vertex Laplacian
\begin{equation}
        L_{0,*}=BW_*B^\top\in\Lin(\R^n).
\end{equation}
Because $W_*>0$, a connected graph satisfies
\begin{equation}
        \Ker L_{0,*}
        =\Ker B^\top
        =\operatorname{span}\{\mathbf 1_V\}.
\end{equation}
This one-dimensional harmonic $0$-mode is the uniform phase-shift symmetry; it disappears after a reference phase is fixed, or equivalently after quotienting by constant vertex functions.

The branch quantities introduced above are instead $1$-cochains.  More precisely,
\begin{equation}
        B^\top\theta\in\mathcal C=\im B^\top,
        \qquad
        \eta,u,f\in C^1(G)=X.
        \label{eq:edge-cochain-levels}
\end{equation}
Thus the bus-generated part of the effective phase is an exact edge field, whereas the independent actuator $u$ is not restricted to the cut space and may supply both cut and harmonic edge components.  In the linearized model derived below, with the line actuator held fixed, the bus-phase contribution to the line-flow output belongs to the weighted cut space
\begin{equation}
        \mathcal C_*=\im(W_*B^\top),
\end{equation}
and consequently has zero weighted harmonic projection.  For the graph regarded as a one-dimensional complex, the edge Hodge Laplacian is
\begin{equation}
        L_1=B^\top B\in\Lin(X),
\end{equation}
and
\begin{equation}
        \Ker L_1=\Ker B,
        \qquad
        \dim\Ker B=b_1.
\end{equation}
These harmonic $1$-cochains are divergence-free cycle flows rather than gauge directions; the positive operating-point metric changes the orthogonal splitting, but not the harmonic subspace $\Ker B$ itself.

Introducing the independent edge actuator $u$ therefore augments, rather than replaces, the $0$-cochain swing plant by a $1$-cochain control port.  In the ideal model this port makes the $b_1$-dimensional cycle-flow sector directly accessible.  A finite-channel realization retains that authority only under the harmonic rank conditions developed above.  Direct authority over the harmonic output of the linearized model is therefore supplied by the independent edge-actuator component.
\end{remark}

\subsection{Linearization of the two closed-loop systems at the cycle-free target}
Introduce the deviation variables
\begin{equation}
        \vartheta=\theta-\theta_*,
        \qquad
        \nu=\omega,
        \qquad
        w=u-u_*.
        \label{eq:section15-deviation-variables}
\end{equation}
Let
\begin{equation}
        \mathcal L_*
        :=\mathrm D f(\theta_*,u_*)
        \in\Lin(\R^n\times X,X)
        \label{eq:linearized-line-flow-operator-space}
\end{equation}
denote the Fr\'echet derivative of the nonlinear line-flow map at the operating point.  Its action is
\begin{equation}
        \mathcal L_*[\vartheta,w]
        =
        W_*(B^\top\vartheta+w).
\end{equation}
To keep the state coupling visible, we write
\begin{equation}
        r=r(\vartheta,w)
        :=
        \mathcal L_*[\vartheta,w]
        =
        W_*B^\top\vartheta+W_*w
        \label{eq:section15-linearized-line-flow-output}
\end{equation}
for the linearized line-flow output.  When no confusion is possible, we abbreviate $r(\vartheta,w)$ by $r$.  Thus the first-order expansion is
\begin{equation}
        f(\theta_*+\vartheta,u_*+w)-f_*
        =
        r
        +
        O\bigl((\norm{\vartheta}+\norm{w})^2\bigr).
\end{equation}

Linearizing the complete ideal closed system \eqref{eq:section15-ideal-nonlinear-closed-loop}, including its feedback law, gives
\begin{subequations}
\label{eq:section15-linearized-ideal-system}
\begin{align}
        \dot\vartheta
        &=\nu,
        \\
        M\dot\nu
        &=-D\nu-Br(\vartheta,w),
        \\
        \tau\dot w
        &=-\kappa W_*^{-1}P_H^{W_*}r(\vartheta,w).
\end{align}
\end{subequations}
Similarly, the tangent equation of the finite-channel system \eqref{eq:section15-finite-nonlinear-closed-loop} is
\begin{subequations}
\label{eq:section15-linearized-finite-system}
\begin{align}
        \dot\vartheta
        &=\nu,
        \\
        M\dot\nu
        &=-D\nu-Br(\vartheta,w),
        \\
        \tau\dot w
        &=W_*^{-1}CKSr(\vartheta,w).
\end{align}
\end{subequations}
The two tangent systems share the same linearized swing equations.  They differ only in the actuator equation, because the feedback law itself has also been linearized.  The abbreviation $r(\vartheta,w)$ should not hide the state coupling.  For example, the finite-channel tangent system is equivalently
\begin{subequations}
\label{eq:section15-linearized-finite-system-expanded}
\begin{align}
        \dot\vartheta
        &=\nu,
        \\
        M\dot\nu
        &=-D\nu-BW_*B^\top\vartheta-BW_*w,
        \\
        \tau\dot w
        &=W_*^{-1}CKSW_*B^\top\vartheta
        +W_*^{-1}CKSW_*w.
\end{align}
\end{subequations}
Thus $w$ acts directly on the swing equation through $-BW_*w$, while the bus-phase deviation acts on the actuator dynamics through $W_*^{-1}CKSW_*B^\top\vartheta$; the frequency deviation enters the actuator loop indirectly through $\dot\vartheta=\nu$.

The weighted projection identities used below are a direct consequence of the following standard finite-dimensional fact \cite{HornJohnsonMatrixAnalysis}.  We include the proof to fix the metric convention and make the later annihilation and energy identities explicit.

\begin{lemma}[Weighted orthogonal projection]
\label{lem:weighted-orthogonal-projection}
Let $Q\in\Lin(\R^m)$ be symmetric positive definite, and let $H\in\Lin(\R^r,\R^m)$ have full column rank.  Then
\begin{equation}
        P_H^Q
        :=H(H^\top QH)^{-1}H^\top Q
        \label{eq:general-weighted-projector}
\end{equation}
is the $Q$-orthogonal projection onto $\im H$ for the inner product
\begin{equation}
        \langle x,y\rangle_Q=x^\top Qy.
\end{equation}
More precisely,
\begin{equation}
        (P_H^Q)^2=P_H^Q,
        \qquad
        \im P_H^Q=\im H,
        \qquad
        \Ker P_H^Q=(\im H)^{\perp_Q},
        \label{eq:weighted-projection-properties}
\end{equation}
and
\begin{equation}
        (P_H^Q)^\top Q=QP_H^Q.
        \label{eq:weighted-projection-self-adjoint}
\end{equation}
Consequently,
\begin{equation}
        \langle x,P_H^Qx\rangle_Q
        =\norm{P_H^Qx}_Q^2.
        \label{eq:weighted-projection-energy-identity}
\end{equation}
If $H^\top QH=I_r$, then $P_H^Q=HH^\top Q$.
\end{lemma}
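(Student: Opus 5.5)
The argument is a direct matrix computation. First note that $H^\top QH\in\Lin(\R^r)$ is invertible. Since $Q$ is positive definite and $H$ has full column rank, $y^\top H^\top QHy=\norm{Hy}_Q^2>0$ for every $y\neq0$. Hence $H^\top QH$ is symmetric positive definite, and the formula \eqref{eq:general-weighted-projector} makes sense.

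Idempotence follows by cancellation:
\[
(P_H^Q)^2=H(H^\top QH)^{-1}(H^\top QH)(H^\top QH)^{-1}H^\top Q=P_H^Q .
\]

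For the range, $\im P_H^Q\subseteq\im H$ is clear from the leading factor $H$. Conversely, $P_H^QHy=Hy$ for every $y$, which gives equality.

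For the kernel, first observe that $x\in(\im H)^{\perp_Q}$ if and only if $H^\top Qx=0$, because $\langle Hy,x\rangle_Q=y^\top H^\top Qx$. Every such $x$ is therefore annihilated by $P_H^Q$. Conversely, suppose $P_H^Qx=0$. Since $H$ is injective, this gives $(H^\top QH)^{-1}H^\top Qx=0$, hence $H^\top Qx=0$.

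Self-adjointness \eqref{eq:weighted-projection-self-adjoint} is a transpose computation. Using $Q^\top=Q$ and the symmetry of $(H^\top QH)^{-1}$,
\[
(P_H^Q)^\top Q=QH(H^\top QH)^{-1}H^\top Q=QP_H^Q .
\]

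The energy identity \eqref{eq:weighted-projection-energy-identity} then follows from this relation together with idempotence:
\[
\norm{P_H^Qx}_Q^2=x^\top(P_H^Q)^\top QP_H^Qx=x^\top QP_H^QP_H^Qx=x^\top QP_H^Qx=\langle x,P_H^Qx\rangle_Q .
\]

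Finally, the normalized case is immediate. If $H^\top QH=I_r$, the inverse in \eqref{eq:general-weighted-projector} is the identity, so $P_H^Q=HH^\top Q$.

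No step is a genuine obstacle. The only point needing care is the kernel characterization, where the full column rank of $H$ is used a second time, namely to cancel $H$ on the left.
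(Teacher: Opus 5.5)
Your proof is correct and follows the paper's argument step by step. It uses positivity of $H^\top QH$ for invertibility, cancellation for idempotence, $P_H^QH=H$ for the range, a transpose computation for $Q$-self-adjointness, and idempotence plus self-adjointness for the energy identity. The only small difference is in the converse kernel inclusion: you cancel $H$ using its injectivity, while the paper multiplies by $H^\top Q$ and uses $H^\top QP_H^Q=H^\top Q$. Both arguments are equally valid.
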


\begin{proof}
For $c\in\R^r\setminus\{0\}$,
\begin{equation}
        c^\top H^\top QHc
        =(Hc)^\top Q(Hc)>0,
\end{equation}
because $Q>0$ and $H$ has full column rank.  Thus $H^\top QH$ is invertible.  Direct multiplication gives
\begin{align}
        (P_H^Q)^2
        &=H(H^\top QH)^{-1}H^\top QH
          (H^\top QH)^{-1}H^\top Q
        \notag\\
        &=P_H^Q.
\end{align}
The formula \eqref{eq:general-weighted-projector} shows that $\im P_H^Q\subseteq\im H$, while
\begin{equation}
        P_H^QH=H
\end{equation}
shows the reverse inclusion.  Hence $\im P_H^Q=\im H$.

For every $x\in\R^m$,
\begin{equation}
        H^\top Q(I-P_H^Q)x=0,
\end{equation}
so $(I-P_H^Q)x\in(\im H)^{\perp_Q}$.  If $x\in(\im H)^{\perp_Q}$, then $H^\top Qx=0$ and therefore $P_H^Qx=0$.  Conversely, if $P_H^Qx=0$, multiplication by $H^\top Q$ gives
\begin{equation}
        H^\top Qx
        =H^\top QP_H^Qx
        =0,
\end{equation}
where $H^\top QP_H^Q=H^\top Q$ follows from the defining formula.  This proves the kernel identity.

Taking transposes in \eqref{eq:general-weighted-projector} and using symmetry of $Q$ and $H^\top QH$ yields
\begin{equation}
        (P_H^Q)^\top Q
        =QH(H^\top QH)^{-1}H^\top Q
        =QP_H^Q.
\end{equation}
Finally, idempotence and $Q$-self-adjointness give
\begin{equation}
        \langle x,P_H^Qx\rangle_Q
        =x^\top QP_H^Qx
        =x^\top(P_H^Q)^\top QP_H^Qx
        =\norm{P_H^Qx}_Q^2.
\end{equation}
The orthonormal-basis formula is immediate.
\end{proof}

\subsubsection*{Weighted Hodge decomposition of the linearized line-flow output}

We now specialize Lemma~\ref{lem:weighted-orthogonal-projection} to the operating-point metric $W_*^{-1}$ and introduce the weighted Hodge coordinates common to the ideal and finite-channel output analyses.

Let
\begin{equation}
        H_*\in\Lin(\R^{b_1},X)
\end{equation}
be a coordinate map whose columns form a basis of $\Ker B$ satisfying
\begin{equation}
        H_*^\top W_*^{-1}H_*=I_{b_1}.
\end{equation}
Then Lemma~\ref{lem:weighted-orthogonal-projection}, with $Q=W_*^{-1}$, gives
\begin{equation}
        P_H^{W_*}=H_*H_*^\top W_*^{-1},
        \qquad
        P_C^{W_*}=I-P_H^{W_*}.
        \label{eq:section15-weighted-projectors}
\end{equation}
Moreover,
\begin{equation}
        (\Ker B)^{\perp_{W_*^{-1}}}
        =\im(W_*B^\top).
        \label{eq:weighted-cut-complement}
\end{equation}
Indeed, $H_*^\top W_*^{-1}(W_*B^\top y)=H_*^\top B^\top y=0$, and the two spaces have the same dimension.  In particular,
\begin{equation}
        P_H^{W_*}W_*B^\top=0.
        \label{eq:weighted-projector-annihilation}
\end{equation}
For the linearized line-flow output, introduce the weighted Hodge coordinates
\begin{equation}
        r=r_C+H_*h,
        \qquad
        r_C=P_C^{W_*}r,
        \qquad
        h=H_*^\top W_*^{-1}r.
        \label{eq:section15-harmonic-coordinates}
\end{equation}
In these common coordinates, the ideal and finite-channel feedback laws lead to the distinct output dynamics stated below.

\begin{theorem}[Ideal Hodge separation and harmonic-output damping for the linearized swing system]
\label{thm:section15-ideal-harmonic-output}
Let $(\theta_*,0,u_*)$ satisfy \eqref{eq:section15-target-balance} and \eqref{eq:section15-cycle-free-target}, and let the operating point be angle-cohesive.  Every solution of the ideal tangent system \eqref{eq:section15-linearized-ideal-system} satisfies
\begin{equation}
        \tau\frac{\dd}{\dd t}P_H^{W_*}r
        =
        -\kappa P_H^{W_*}r.
        \label{eq:section15-ideal-harmonic-output}
\end{equation}
Consequently,
\begin{equation}
        P_H^{W_*}r(t)
        =
        e^{-(\kappa/\tau)t}P_H^{W_*}r(0)
        \qquad (t\ge0).
        \label{eq:section15-ideal-harmonic-solution}
\end{equation}
Moreover, on the effective output state
\begin{equation}
        z_{\rm eff}:=(\nu,r_C,h)
        \in\R^n\times\mathcal C_*\times\R^{b_1},
        \label{eq:section15-ideal-effective-state}
\end{equation}
the ideal tangent system has the block-separated form
\begin{subequations}
\label{eq:section15-ideal-effective-output-system}
\begin{align}
        M\dot\nu
        &=-D\nu-Br_C,
        \\
        \dot r_C
        &=W_*B^\top\nu,
        \\
        \tau\dot h
        &=-\kappa h.
\end{align}
\end{subequations}
Let $\mathcal A_{\rm sw}$ denote the generator of the first two equations on $\R^n\times\mathcal C_*$, and let $\mathcal A_{\rm id}^{\rm eff}$ denote the generator of \eqref{eq:section15-ideal-effective-output-system}.  Then
\begin{equation}
        \mathcal A_{\rm id}^{\rm eff}
        =
        \mathcal A_{\rm sw}
        \oplus
        \left(-\frac{\kappa}{\tau}I_{b_1}\right),
        \label{eq:section15-ideal-generator-direct-sum}
\end{equation}
and hence its characteristic polynomial factors as
\begin{equation}
        \chi_{\mathcal A_{\rm id}^{\rm eff}}(\lambda)
        =
        \chi_{\mathcal A_{\rm sw}}(\lambda)
        \left(\lambda+\frac{\kappa}{\tau}\right)^{b_1}.
        \label{eq:section15-ideal-characteristic-factorization}
\end{equation}
Thus the ideal generator preserves the passive swing spectrum and appends the eigenvalue $-\kappa/\tau$ with algebraic multiplicity $b_1$.  In particular, $\mathcal A_{\rm id}^{\rm eff}$ is Hurwitz if and only if $\mathcal A_{\rm sw}$ is Hurwitz.  Under the standing assumptions that the graph is connected and $M,D,W_*>0$, both generators are Hurwitz.
\end{theorem}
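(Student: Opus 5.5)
The plan is to differentiate the linearized output $r=W_*B^\top\vartheta+W_*w$ along solutions of \eqref{eq:section15-linearized-ideal-system}. Since $W_*$ and $B$ are constant,
\[
\dot r=W_*B^\top\nu+W_*\dot w
=W_*B^\top\nu-\frac{\kappa}{\tau}P_H^{W_*}r .
\]
Next I would apply $P_H^{W_*}$ to this identity. The annihilation identity \eqref{eq:weighted-projector-annihilation} removes the bus term $W_*B^\top\nu$, and idempotence from Lemma~\ref{lem:weighted-orthogonal-projection} reduces $P_H^{W_*}P_H^{W_*}r$ to $P_H^{W_*}r$. Together these give \eqref{eq:section15-ideal-harmonic-output}, and \eqref{eq:section15-ideal-harmonic-solution} follows by solving a linear ODE with scalar coefficient. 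Applying $P_C^{W_*}=I-P_H^{W_*}$ instead gives $\dot r_C=W_*B^\top\nu$, because $W_*B^\top\nu\in\mathcal C_*=\im P_C^{W_*}$ by \eqref{eq:weighted-cut-complement} and $P_C^{W_*}P_H^{W_*}=0$. The harmonic coordinate $h=H_*^\top W_*^{-1}r$ satisfies $H_*h=P_H^{W_*}r$, with $H_*$ injective, so $\tau\dot h=-\kappa h$.

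For the swing equation I would use $BH_*=0$, which gives $Br=Br_C$ and hence $M\dot\nu=-D\nu-Br_C$. This establishes the block form \eqref{eq:section15-ideal-effective-output-system}. In it, the $(\nu,r_C)$ block does not involve $h$, and the $h$ equation does not involve $(\nu,r_C)$. The generator is therefore the direct sum \eqref{eq:section15-ideal-generator-direct-sum}, the characteristic polynomial factors, and Hurwitz equivalence is immediate because $-\kappa/\tau<0$.

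The main work is showing that $\mathcal A_{\rm sw}$ on $\R^n\times\mathcal C_*$ is Hurwitz. Note that the gauge direction $\mathbf 1$ has been removed: we work with $r_C$ rather than $\vartheta$, and this is the reason.

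\emph{Energy function.} I would take
\[
E=\tfrac12\nu^\top M\nu+\tfrac12 r_C^\top W_*^{-1}r_C .
\]
Writing $r_C=W_*B^\top y$, we get $r_C^\top W_*^{-1}\dot r_C=y^\top BW_*B^\top\nu$ and $\nu^\top Br_C=\nu^\top BW_*B^\top y$. The cross terms cancel, leaving $\dot E=-\nu^\top D\nu\le0$.

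\emph{LaSalle step.} On the invariant set $\nu\equiv0$, the swing equation forces $Br_C\equiv0$. Since $r_C\in\mathcal C_*=(\Ker B)^{\perp_{W_*^{-1}}}$, the vector $r_C$ lies in both $\Ker B$ and its $W_*^{-1}$-orthogonal complement, so $r_C=0$. Connectedness is used only to identify $\mathcal C_*=\im(W_*B^\top)$ with the correct dimension.

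\emph{Conclusion.} For a linear system with a positive definite quadratic Lyapunov function and no nonzero invariant trajectory inside $\{\dot E=0\}$, all eigenvalues have negative real part. Equivalently, for an eigenpair with $\operatorname{Re}\lambda\ge0$, the energy identity forces $\nu=0$ and then $r_C=0$.
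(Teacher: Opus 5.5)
Your proposal is correct and follows the paper's proof essentially step for step. You use the same differentiation of $r$ together with the annihilation identity $P_H^{W_*}W_*B^\top=0$ and idempotence; the same $P_C^{W_*}$ and $H_*^\top W_*^{-1}$ projections with $BH_*=0$ for the block form; and the same storage $\tfrac12\nu^\top M\nu+\tfrac12 r_C^\top W_*^{-1}r_C$ with a LaSalle argument for $\mathcal A_{\rm sw}$ (your eigenvector variant is an equally valid shortcut), though $\mathcal C_*=(\Ker B)^{\perp_{W_*^{-1}}}$ holds by rank--nullity for any graph, so connectedness is not actually what that step relies on.
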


\begin{proof}
By \eqref{eq:section15-linearized-line-flow-output} and $\dot\vartheta=\nu$,
\begin{equation}
        \dot r
        =
        \mathcal L_*[\nu,\dot w]
        =
        W_*B^\top\nu+W_*\dot w.
        \label{eq:section15-output-derivative}
\end{equation}
By \eqref{eq:weighted-projector-annihilation},
\begin{equation}
        P_H^{W_*}W_*B^\top\nu=0.
        \label{eq:section15-projection-annihilates-swing-term}
\end{equation}
Multiplying \eqref{eq:section15-output-derivative} by $\tau P_H^{W_*}$ and using the third equation of \eqref{eq:section15-linearized-ideal-system} yields
\begin{align}
        \tau\frac{\dd}{\dd t}P_H^{W_*}r
        &=
        \tau P_H^{W_*}W_*B^\top\nu
        +
        \tau P_H^{W_*}W_*\dot w
        \notag\\
        &=
        -\kappa(P_H^{W_*})^2r
        =
        -\kappa P_H^{W_*}r.
\end{align}
Equation \eqref{eq:section15-ideal-harmonic-solution} follows.

Since $BH_*=0$, one has $Br=Br_C$, and the swing equation gives the first equation of \eqref{eq:section15-ideal-effective-output-system}.  Projecting \eqref{eq:section15-output-derivative} by $P_C^{W_*}$ gives
\begin{equation}
        \dot r_C
        =
        W_*B^\top\nu,
\end{equation}
because $W_*B^\top\nu\in\mathcal C_*$ and $W_*\dot w=-(\kappa/\tau)P_H^{W_*}r$ is harmonic.  Multiplication by $H_*^\top W_*^{-1}$ gives $\tau\dot h=-\kappa h$.  This proves the block separation and therefore \eqref{eq:section15-ideal-generator-direct-sum}--\eqref{eq:section15-ideal-characteristic-factorization}.

It remains to verify the standing stability assertion for $\mathcal A_{\rm sw}$.  On $\R^n\times\mathcal C_*$, consider
\begin{equation}
        \mathcal E_{\rm sw}(\nu,r_C)
        :=
        \frac12\nu^\top M\nu
        +
        \frac12 r_C^\top W_*^{-1}r_C.
        \label{eq:section15-passive-swing-output-energy}
\end{equation}
Along the first two equations of \eqref{eq:section15-ideal-effective-output-system},
\begin{equation}
        \dot{\mathcal E}_{\rm sw}
        =
        -\nu^\top D\nu.
        \label{eq:section15-passive-swing-output-dissipation}
\end{equation}
Because $D>0$, the largest invariant subset of $\{\dot{\mathcal E}_{\rm sw}=0\}$ has $\nu=0$ and hence $Br_C=0$.  Since $r_C\in\mathcal C_*=(\Ker B)^{\perp_{W_*^{-1}}}$, this implies $r_C=0$.  Thus the reduced passive swing system is asymptotically stable and, being finite-dimensional and linear, exponentially stable.  Hence $\mathcal A_{\rm sw}$ is Hurwitz, and the direct-sum formula gives the same conclusion for $\mathcal A_{\rm id}^{\rm eff}$.
\end{proof}

\begin{remark}[Harmonic disturbances and passive swing damping]
\label{rem:passive-swing-harmonic-invariance}
The preceding theorem replaces a conservation law of the passive tangent swing system by prescribed decay.  If the line actuator is held fixed, so that $\dot w=0$, then \eqref{eq:section15-output-derivative} reduces to
\begin{equation}
        \dot r=W_*B^\top\nu.
\end{equation}
Since $W_*B^\top\nu\in\mathcal C_*=\im(W_*B^\top)$, \eqref{eq:weighted-projector-annihilation} gives
\begin{equation}
        \frac{\dd}{\dd t}P_H^{W_*}r=0,
        \qquad
        P_H^{W_*}r(t)=P_H^{W_*}r(0).
        \label{eq:passive-swing-harmonic-conservation}
\end{equation}
Thus passive frequency damping may stabilize bus motion and the cut-output component while leaving an already present harmonic line-flow deviation unchanged.  In particular, a purely harmonic output perturbation $r_H\in\Ker B$ satisfies $Br_H=0$ and creates no incremental nodal torque.

The ideal line controller changes \eqref{eq:passive-swing-harmonic-conservation} into the exponential law \eqref{eq:section15-ideal-harmonic-solution}.  This identifies precisely the additional disturbance class targeted by topological line actuation.

Within the passive linearized system the cut-space restriction is exact.  If the actuator deviation is held at zero, then
\begin{equation}
        r(t)=W_*B^\top\vartheta(t)\in\mathcal C_*
        \qquad (t\ge0),
\end{equation}
so bus-angle dynamics cannot generate a harmonic line-flow output.  More generally, if the actuator deviation is held fixed, any harmonic component already contained in $W_*w$ is conserved by \eqref{eq:passive-swing-harmonic-conservation}.  A nonzero harmonic output in the linearized model must therefore be supplied by an independent line degree of freedom, such as a line-device or converter state, a switching or reclosing disturbance, or unequal line-current states.  The full nonlinear system has a different possibility: a finite bus-angle configuration can itself carry harmonic line flow, as Subsection~\ref{subsec:winding-one-ring-equilibrium} shows explicitly.

The block separation in Theorem~\ref{thm:section15-ideal-harmonic-output} also gives a stability interpretation.  The ideal thermostat is non-invasive with respect to the reduced passive swing dynamics: it leaves every nodal--cut eigenvalue unchanged and appends only the stable harmonic block $-(\kappa/\tau)I_{b_1}$.  It therefore cannot reverse the linearized stability of a stable passive swing equilibrium.  Conversely, it does not repair an unstable nodal--cut mode.  Its benchmark role is to test topological selectivity while preserving the native linearized swing dynamics, rather than to maximize the full closed-loop stability margin.
\end{remark}

\begin{lemma}[Weighted cut--harmonic block form of the finite-channel tangent system]
\label{lem:section15-finite-output-block-form}
Consider the finite-channel tangent system \eqref{eq:section15-linearized-finite-system}, and put
\begin{equation}
        L:=CKS\in\Lin(X).
\end{equation}
With the weighted output decomposition $r=r_C+H_*h$ from \eqref{eq:section15-harmonic-coordinates}, define the four blocks of $L$ relative to
\begin{equation}
        X=\mathcal C_*\oplus^{\perp_{W_*^{-1}}}\Ker B,
        \qquad
        \mathcal C_*:=\im(W_*B^\top),
\end{equation}
by
\begin{equation}
\begin{aligned}
        L_{CC}
        &:={P_C^{W_*}L}\big|_{\mathcal C_*}
          \in\Lin(\mathcal C_*),
        &
        L_{CH}
        &:=P_C^{W_*}LH_*
          \in\Lin(\R^{b_1},\mathcal C_*),
        \\
        L_{HC}
        &:={H_*^\top W_*^{-1}L}\big|_{\mathcal C_*}
          \in\Lin(\mathcal C_*,\R^{b_1}),
        &
        L_{HH}
        &:=H_*^\top W_*^{-1}LH_*
          \in\Lin(\R^{b_1}).
\end{aligned}
        \label{eq:section15-controller-blocks}
\end{equation}
The first index denotes the receiving output sector and the second the originating sector.  For every solution of \eqref{eq:section15-linearized-finite-system}, the variables $(\nu,r_C,h)$ satisfy the closed output system
\begin{subequations}
\label{eq:section15-coupled-output-system}
\begin{align}
        M\dot\nu
        &=-D\nu-Br_C,
        \\
        \tau\dot r_C
        &=\tau W_*B^\top\nu+L_{CC}r_C+L_{CH}h,
        \\
        \tau\dot h
        &=L_{HC}r_C+L_{HH}h.
        \label{eq:section15-general-harmonic-block-equation}
\end{align}
\end{subequations}
In particular,
\begin{equation}
\begin{aligned}
        h(t)
        ={}&e^{(t/\tau)L_{HH}}h(0)
        \\
        &+\frac1\tau\int_0^t
        e^{((t-s)/\tau)L_{HH}}L_{HC}r_C(s)\,\dd s.
\end{aligned}
        \label{eq:section15-harmonic-variation-of-constants}
\end{equation}
\end{lemma}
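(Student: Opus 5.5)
The plan follows the proof of Theorem~\ref{thm:section15-ideal-harmonic-output}, with the ideal feedback replaced by $L=CKS$. The starting point is the output derivative \eqref{eq:section15-output-derivative}, $\dot r=W_*B^\top\nu+W_*\dot w$. Using the third equation of \eqref{eq:section15-linearized-finite-system}, $\tau W_*\dot w=CKSr=Lr$. Hence every solution satisfies the closed identity
\[
\tau\dot r=\tau W_*B^\top\nu+Lr .
\]
All three equations of \eqref{eq:section15-coupled-output-system} will be obtained by projecting this identity and the swing equation onto the weighted Hodge sectors.

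\textbf{Swing equation.} Since $BH_*=0$, the decomposition \eqref{eq:section15-harmonic-coordinates} gives $Br=Br_C$. Substituting this into the second equation of \eqref{eq:section15-linearized-finite-system} yields the first equation of \eqref{eq:section15-coupled-output-system}.

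\textbf{Cut component.} Apply $P_C^{W_*}$ to the identity. By \eqref{eq:weighted-projector-annihilation}, $P_C^{W_*}W_*B^\top\nu=W_*B^\top\nu$, because this vector lies in $\mathcal C_*$. Next I split $Lr=Lr_C+LH_*h$. The term $P_C^{W_*}Lr_C$ is exactly $L_{CC}r_C$, since $r_C\in\mathcal C_*$. The term $P_C^{W_*}LH_*h$ is exactly $L_{CH}h$. Together these give the second equation.

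\textbf{Harmonic component.} Apply $H_*^\top W_*^{-1}$ to the identity. We have $H_*^\top W_*^{-1}W_*B^\top\nu=(BH_*)^\top\nu=0$, and $H_*^\top W_*^{-1}Lr=L_{HC}r_C+L_{HH}h$ by definition. Since $h=H_*^\top W_*^{-1}r$ and the map is constant, $\tau\dot h$ equals the projected left-hand side. This gives \eqref{eq:section15-general-harmonic-block-equation}.

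\textbf{Variation of constants.} Rewrite the harmonic equation as $\dot h=\tau^{-1}L_{HH}h+\tau^{-1}L_{HC}r_C(t)$, treating $r_C$ as a known continuous forcing. The standard variation-of-constants formula then gives \eqref{eq:section15-harmonic-variation-of-constants}. One can check this by differentiating $e^{-(t/\tau)L_{HH}}h(t)$.

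\textbf{Where care is needed.} There is no real obstacle, only bookkeeping. Two points need checking. First, that $P_C^{W_*}$ fixes $W_*B^\top\nu$ and that $H_*^\top W_*^{-1}$ kills it; both follow from \eqref{eq:weighted-cut-complement}. Second, that the blocks are applied to $r_C\in\mathcal C_*$ rather than to all of $X$, so the restrictions in \eqref{eq:section15-controller-blocks} are legitimate.

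Note that the closed output system involves only $(\nu,r_C,h)$ and not $\vartheta$ or $w$ separately. This is because $L$ acts on $r$ alone, which is precisely what target centering ensures.
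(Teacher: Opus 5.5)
Your proposal is correct and follows the paper's own proof essentially step by step. Both derive $\tau\dot r=\tau W_*B^\top\nu+Lr$ from the output derivative and the actuator equation, then project with $P_C^{W_*}$ and with $H_*^\top W_*^{-1}$ (using $H_*^\top B^\top=0$), use $BH_*=0$ to get $Br=Br_C$ in the swing equation, and conclude by variation of constants.
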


\begin{proof}
From \eqref{eq:section15-output-derivative} and the actuator equation in \eqref{eq:section15-linearized-finite-system},
\begin{equation}
        \tau\dot r
        =
        \tau W_*B^\top\nu+Lr.
        \label{eq:section15-full-output-derivative}
\end{equation}
Since $r=r_C+H_*h$, projection by $P_C^{W_*}$ gives
\begin{equation}
        \tau\dot r_C
        =
        \tau W_*B^\top\nu+L_{CC}r_C+L_{CH}h,
\end{equation}
where $W_*B^\top\nu\in\mathcal C_*$.  Multiplication by $H_*^\top W_*^{-1}$ and the identity
\begin{equation}
        H_*^\top W_*^{-1}W_*B^\top
        =H_*^\top B^\top
        =0
\end{equation}
give
\begin{equation}
        \tau\dot h=L_{HC}r_C+L_{HH}h.
\end{equation}
Finally, $BH_*=0$ implies $Br=Br_C$, which yields the first equation of \eqref{eq:section15-coupled-output-system}.  Formula \eqref{eq:section15-harmonic-variation-of-constants} follows from variation of constants.
\end{proof}

The block form separates four controller effects.  The block $L_{HH}$ governs harmonic input acting on harmonic output and is the block prescribed by finite-channel compression.  The block $L_{HC}$ transfers cut-output motion into the harmonic coordinate and therefore appears as the direct forcing term in the harmonic equation.  Conversely, $L_{CH}$ transfers harmonic motion into the cut output, from which it enters the nodal swing dynamics through $-Br_C$, while $L_{CC}$ modifies the controller action within the weighted cut space.  Thus harmonic compression fixes only one of the four blocks; exact harmonic autonomy and stability of the complete tangent system require additional information.  Although $(\nu,r_C,h)$ is a closed output-level system, it need not be a state-equivalent realization of the complete tangent dynamics, because phase--actuator combinations may be invisible in $r$.  This is why the full reduced tangent generator still enters the recovery statement below.

\begin{corollary}[Finite-channel compression, exact harmonic decay, and full reduced-state recovery]
\label{cor:section15-harmonic-output}
Assume
\begin{equation}
        \rank(SH_*)=b_1,
        \qquad
        \rank(H_*^\top W_*^{-1}C)=b_1.
        \label{eq:section15-finite-channel-ranks}
\end{equation}
Then, for every $\kappa>0$, there exists $K\in\Lin(\R^{n_s},\R^{n_a})$ such that the harmonic block of Lemma~\ref{lem:section15-finite-output-block-form} satisfies
\begin{equation}
        L_{HH}
        =H_*^\top W_*^{-1}CKSH_*
        =-\kappa I_{b_1}.
        \label{eq:cor15-weighted-compression}
\end{equation}
In the minimal square case $n_a=n_s=b_1$, this gain is unique for fixed $S$ and $C$ and is given by
\begin{equation}
        K
        =
        -\kappa
        (H_*^\top W_*^{-1}C)^{-1}
        (SH_*)^{-1}.
        \label{eq:section15-square-gain}
\end{equation}
For every gain satisfying \eqref{eq:cor15-weighted-compression}, the harmonic coordinate obeys
\begin{equation}
        \tau\dot h
        =
        -\kappa h+L_{HC}r_C.
        \label{eq:cor15-harmonic-coordinate-with-coupling}
\end{equation}
Consequently, the following statements hold.

\begin{enumerate}[label=\textup{(\roman*)}]
\item The prescribed autonomous law
\begin{equation}
        h(t)=e^{-(\kappa/\tau)t}h(0)
        \label{eq:section15-exact-finite-harmonic-decay}
\end{equation}
holds for every tangent solution if and only if
\begin{equation}
        L_{HC}=0,
        \qquad\text{equivalently}\qquad
        H_*^\top W_*^{-1}CKSP_C^{W_*}=0.
        \label{eq:cor15-no-cut-to-harmonic}
\end{equation}
A sufficient condition is
\begin{equation}
        SP_C^{W_*}=0.
        \label{eq:section15-hodge-filtered-sensor}
\end{equation}

\item Quotient the phase deviation by the uniform phase-shift direction and restrict the actuator deviation to the invariant implemented subspace $\im(W_*^{-1}C)$.  Denote the resulting reduced tangent state by
\begin{equation}
        z_{\mathrm{red}}
        :=\bigl([\vartheta],\nu,w\bigr)
        \in
        \bigl(\R^n/\operatorname{span}\{\mathbf 1_V\}\bigr)
        \times\R^n\times\im(W_*^{-1}C),
        \label{eq:section15-reduced-tangent-state}
\end{equation}
and let $\mathcal A_{\mathrm{red}}(S,C,K)$ denote the matrix of its linear dynamics in any fixed coordinates.  If
\begin{equation}
        s\bigl(\mathcal A_{\mathrm{red}}(S,C,K)\bigr)<0,
        \label{eq:section15-hurwitz-condition}
\end{equation}
where $s(\cdot)$ is the spectral abscissa, then there exist constants $M_0\ge1$ and $\mu>0$ such that
\begin{equation}
        \norm{z_{\mathrm{red}}(t)}
        \le
        M_0e^{-\mu t}\norm{z_{\mathrm{red}}(0)}.
        \label{eq:section15-full-reduced-state-recovery}
\end{equation}
This is the precise meaning of \emph{full reduced-state recovery}: the phase returns to the operating point modulo a uniform shift, the frequency deviation vanishes, and the implemented actuator deviation returns to zero,
\begin{equation}
        \inf_{c\in\R}\norm{\vartheta(t)-c\mathbf 1_V}\longrightarrow0,
        \qquad
        \nu(t)\longrightarrow0,
        \qquad
        w(t)\longrightarrow0.
        \label{eq:section15-full-recovery-components}
\end{equation}
Consequently,
\begin{equation}
        r(t)\longrightarrow0,
        \qquad
        r_C(t)\longrightarrow0,
        \qquad
        h(t)\longrightarrow0.
        \label{eq:section15-stable-recovery}
\end{equation}
The harmonic output may nevertheless contain the cut-forced transient described by \eqref{eq:section15-harmonic-variation-of-constants} and need not follow \eqref{eq:section15-exact-finite-harmonic-decay} during the transient.
\end{enumerate}
\end{corollary}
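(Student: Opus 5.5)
The plan is to reduce the existence and gain formula to Lemma~\ref{lem:matrix-compression} (or Theorem~\ref{thm:finite-rank-feedback} in weighted form), then read off the harmonic equation from Lemma~\ref{lem:section15-finite-output-block-form}, and finally treat (i) as a linear-algebra equivalence and (ii) as a standard Hurwitz argument on an invariant reduced space.

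\textbf{Compression and gain.} Apply Lemma~\ref{lem:matrix-compression} with $d=b_1$, $G_C:=H_*^\top W_*^{-1}C$, $G_S:=SH_*$ and $Q=-\kappa I_{b_1}$. The rank hypotheses \eqref{eq:section15-finite-channel-ranks} are exactly condition (i) of that lemma, so a solution $K$ exists. In the square case both $G_C$ and $G_S$ are invertible, so $G_CKG_S=Q$ forces $K=G_C^{-1}QG_S^{-1}$, which is \eqref{eq:section15-square-gain}; uniqueness follows because the solution space has dimension $n_an_s-b_1^2=0$. Substituting $L_{HH}=-\kappa I_{b_1}$ into \eqref{eq:section15-general-harmonic-block-equation} gives \eqref{eq:cor15-harmonic-coordinate-with-coupling}.

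\textbf{Part (i).} If $L_{HC}=0$, the harmonic equation becomes autonomous and the decay law follows. For the converse I would show that every pair $(r_C(0),h(0))\in\mathcal C_*\times\R^{b_1}$ is attained by some tangent initial state. Since $W_*w$ ranges over $X$, one may take $\vartheta=0$ and $w=W_*^{-1}r(0)$. If the decay law held for all solutions, then $\tau\dot h(0)=-\kappa h(0)$, so $L_{HC}r_C(0)=0$ for every $r_C(0)\in\mathcal C_*$, hence $L_{HC}=0$.

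\textbf{Equivalent forms of (i).} Because $P_C^{W_*}$ maps onto $\mathcal C_*$ and vanishes on $\Ker B$, the condition $L_{HC}=0$ is equivalent to $H_*^\top W_*^{-1}LP_C^{W_*}=0$. The sufficient condition \eqref{eq:section15-hodge-filtered-sensor} gives $LP_C^{W_*}=CKSP_C^{W_*}=0$, and hence $L_{HC}=0$.

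\textbf{Part (ii), invariance of the reduced space.} I first check that the reduced space is well defined and invariant. The uniform shift $\vartheta=\mathbf 1_V$ satisfies $B^\top\mathbf 1_V=0$, so it leaves $r$ unchanged and lies in the kernel of the whole right-hand side; hence the dynamics descend to the quotient. The actuator equation gives $\dot w\in\im(W_*^{-1}C)$, so $\im(W_*^{-1}C)$ is invariant whenever $w(0)$ lies in it. Thus $\mathcal A_{\mathrm{red}}$ is a well-defined linear map.

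\textbf{Part (ii), decay and convergence.} The Hurwitz condition gives the exponential bound \eqref{eq:section15-full-reduced-state-recovery} by the standard estimate $\norm{e^{t\mathcal A}}\le M_0e^{-\mu t}$ for any $0<\mu<-s(\mathcal A)$. The statements in \eqref{eq:section15-full-recovery-components} come from reading off the components, with the quotient norm equal to $\inf_c\norm{\vartheta-c\mathbf 1_V}$ up to equivalence of norms. Since $r$ depends only on $([\vartheta],w)$ through the bounded map $W_*(B^\top\vartheta+w)$, we get $r\to0$. The bounded projections $P_C^{W_*}$ and $H_*^\top W_*^{-1}$ then give $r_C\to0$ and $h\to0$. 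The transient caveat is just \eqref{eq:section15-harmonic-variation-of-constants} with $L_{HH}=-\kappa I_{b_1}$.

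\textbf{Main obstacle.} The delicate step is the converse in (i): it needs the surjectivity of the tangent initial data onto output pairs $(r_C,h)$. That surjectivity uses the unrestricted $w\in X$, not the reduced space. I would state this explicitly, noting that the claim concerns every tangent solution of \eqref{eq:section15-linearized-finite-system}.
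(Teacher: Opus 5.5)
Your proposal is correct and follows essentially the paper's own proof: Lemma~\ref{lem:matrix-compression} with $G_C=H_*^\top W_*^{-1}C$, $G_S=SH_*$, $Q=-\kappa I_{b_1}$ for the gain, a tangent initial state with a prescribed $r_C(0)$ for the converse in (i), and the standard Hurwitz estimate on the invariant reduced space for (ii). The obstacle you flag is milder than you state. Since $\mathcal C_*=\im(W_*B^\top)$, any $r_C(0)$ can be produced from $\vartheta(0)$ alone with $w(0)=0$ and $h(0)=0$, so the converse in (i) holds even within the implemented actuator subspace.
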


\begin{proof}
Apply Lemma~\ref{lem:matrix-compression} with
\begin{equation}
        M_H:=H_*^\top W_*^{-1}C,
        \qquad
        N_H:=SH_*,
        \qquad
        Q=-\kappa I_{b_1}.
\end{equation}
The rank assumptions \eqref{eq:section15-finite-channel-ranks} therefore give a gain satisfying \eqref{eq:cor15-weighted-compression}; in the square case the same lemma gives the unique formula \eqref{eq:section15-square-gain}.  Equation \eqref{eq:cor15-harmonic-coordinate-with-coupling} follows immediately from Lemma~\ref{lem:section15-finite-output-block-form}.

If $L_{HC}=0$, equation \eqref{eq:section15-exact-finite-harmonic-decay} follows immediately.  Conversely, suppose $L_{HC}\ne0$.  Choose $r_C^0\in\im(W_*B^\top)$ with $L_{HC}r_C^0\ne0$ and an initial tangent state satisfying $r_C(0)=r_C^0$ and $h(0)=0$; for example, take $\vartheta(0)=0$, $\nu(0)=0$, and $w(0)=W_*^{-1}r_C^0$.  Then
\begin{equation}
        \tau\dot h(0)=L_{HC}r_C^0\ne0,
\end{equation}
whereas \eqref{eq:section15-exact-finite-harmonic-decay} would imply $h\equiv0$.  Thus \eqref{eq:cor15-no-cut-to-harmonic} is necessary.  Condition \eqref{eq:section15-hodge-filtered-sensor} is sufficient because it implies $CKSP_C^{W_*}=0$.

Finally, the implemented actuator subspace is invariant because the third equation of \eqref{eq:section15-linearized-finite-system} has range in $\im(W_*^{-1}C)$.  By the standard exponential-stability theorem for finite-dimensional linear systems \cite{AmannODE}, condition \eqref{eq:section15-hurwitz-condition} gives the estimate \eqref{eq:section15-full-reduced-state-recovery}.  The componentwise interpretation \eqref{eq:section15-full-recovery-components} follows from the definition of the quotient state, and the decay of $r$, $r_C$, and $h$ follows because they are linear outputs of that state.  The final assertion follows from \eqref{eq:section15-harmonic-variation-of-constants} with $L_{HH}=-\kappa I_{b_1}$.
\end{proof}

\begin{remark}[Diagnostic meaning of the finite-channel blocks]
\label{rem:section15-finite-channel-block-diagnostics}
Lemma~\ref{lem:section15-finite-output-block-form} provides a diagnostic template for the finite-channel examples below.  The block $L_{HH}$ tests whether the prescribed harmonic damping has been realized; $L_{HC}$ measures cut-to-harmonic forcing; $L_{CH}$ measures harmonic-to-cut spillover, which can enter the nodal swing dynamics; and $L_{CC}$ records how the controller modifies the transfer sector itself.  The ideal output operator $L_{\rm id}=-\kappa P_H^{W_*}$ has
\begin{equation}
        L_{CC}=L_{CH}=L_{HC}=0,
        \qquad
        L_{HH}=-\kappa I_{b_1}.
\end{equation}
A localized finite-channel realization may reproduce $L_{HH}$ exactly while retaining nonzero cross-blocks, explaining transfer, frequency, or harmonic transients despite exact harmonic compression.

Restricting the initial perturbation to the harmonic sector is not an adequate remedy.  Even if $r_C(0)=0$, the second equation in \eqref{eq:section15-coupled-output-system} may generate a cut component through $W_*B^\top\nu$ or $L_{CH}h$, which can then return to the harmonic coordinate through $L_{HC}$.  Exact prescribed harmonic decay for every perturbation therefore requires \eqref{eq:cor15-no-cut-to-harmonic}.
\end{remark}

\begin{remark}[Exact decoupling and practical stability design]
Condition \eqref{eq:section15-hodge-filtered-sensor} means that the sensor filters out the complete weighted cut component before the feedback acts.  A canonical choice is the distributed harmonic-coordinate sensor
\begin{equation}
        S_H=H_*^\top W_*^{-1}
        \in\Lin(X,\R^{b_1}),
\end{equation}
for which $S_HP_C^{W_*}=0$ and $S_HH_*=I$.  With respect to the $W_*^{-1}$ edge inner product and the Euclidean channel inner product, its metric adjoint is
\begin{equation}
        C=S_H^*=H_*\in\Lin(\R^{b_1},X).
\end{equation}
Together with $K=-\kappa I$, this gives $CKS_H=-\kappa P_H^{W_*}$.  Since multiplication by the positive diagonal matrix $W_*^{-1}$ does not change which rows of $H_*$ vanish, the physical edge support of both distributed channel maps is exactly the cyclic support
\begin{equation}
        E_{\rm cyc}=\operatorname{esupp}(\Ker B).
\end{equation}
They are therefore global Hodge patterns rather than local coordinate channels on only $b_1$ physical edges.  Complete cut--harmonic decoupling would additionally require $L_{CH}=0$.

For parsimonious local channels, the natural engineering objective is therefore not exact reproduction of the ideal trajectory, but the combination of the compression constraint \eqref{eq:cor15-weighted-compression} with a Hurwitz condition such as \eqref{eq:section15-hurwitz-condition}.  The latter gives full reduced-state recovery in the precise sense of \eqref{eq:section15-full-recovery-components}.  Any remaining placement or gain freedom may then be used to reduce $\norm{L_{HC}}$, $\norm{L_{CH}}$, gain size, and non-normal transient amplification.  The no-coupling conditions identify the theoretical limit and provide quantitative design targets even when they cannot be attained exactly.
\end{remark}

\begin{remark}[Meaning of the linearized statements and nonlinear stability]
Theorem~\ref{thm:section15-ideal-harmonic-output} establishes exponential stability of the output-level Hodge variables $(\nu,r_C,h)$ and shows that the ideal closure preserves the passive swing spectrum.  This is a statement about the closed output dynamics, not convergence of the full state variables to one distinguished representative $(\theta_*,0,u_*)$.  The full state retains the uniform phase-shift symmetry and, in the ideal model, neutral phase--actuator combinations that leave $B^\top\vartheta+w$ unchanged.

To invoke the finite-dimensional principle of linearized stability for a particular nonlinear representative, one must remove these neutral directions by fixing an appropriate local slice or quotient and specify the admissible actuator state space.  In such reduced coordinates, the Hurwitz derivative yields local exponential asymptotic stability of the corresponding isolated equilibrium by the standard principle of linearized stability \cite{AmannODE}.  For a finite-channel realization, the complete reduced Jacobian must still be verified to be Hurwitz; exact harmonic compression alone does not provide this conclusion.

The nonlinear energy theorem below addresses a different question.  With the design metric $\Sigma_\rho$ introduced there, and conditional on a trajectory remaining in a compact forward-invariant subset of the angle-cohesive region, it yields convergence to the largest invariant set on which
\begin{equation}
        \omega=0,
        \qquad
        P_H^{\Sigma_\rho}f(\eta)=0,
        \qquad
        Bf(\eta)=p.
\end{equation}
This is a regional convergence result in a prescribed cohesive branch, not a global stability assertion on the entire phase space.  Periodicity of the sine law, different winding sectors, and possible multiplicity of equilibria prevent such a global conclusion without substantially stronger hypotheses.
\end{remark}

\subsection{A nonlinear winding-one ring equilibrium with nonzero loop flow}
\label{subsec:winding-one-ring-equilibrium}
The ring is introduced not as a realistic transmission-grid benchmark,
but as the simplest nonlinear cycle-bearing configuration in which the
limitation of purely nodal swing stabilization can be exhibited exactly.
A winding-one phase profile can be angle-cohesive, synchronous, and
nodally balanced while carrying a nonzero harmonic line flow in
$\Ker B$.  Since this circulation satisfies $Bf=0$, it produces no
nodal accelerating imbalance; passive frequency damping is therefore
inactive and cannot remove it.  The example thus provides an explicit
nonlinear realization of the harmonic flow errors targeted by the
topological thermostat and prepares the exactly solvable controlled
trajectory of Subsection~\ref{subsec:exact-ring-trajectory}, where the
harmonic line-flow output is driven asymptotically to zero while the
bus-angle winding remains one.  The relevant contrast with the
preceding subsection is between the linearized and full nonlinear
systems.  In the passive linearized system with zero actuator deviation,
the weighted cut space $\mathcal C_*$ is an invariant output space, and
bus-angle dynamics cannot generate a harmonic line-flow component.  The
full nonlinear constitutive map
\begin{equation}
        \theta\longmapsto
        \operatorname{diag}(a)\sin(B^\top\theta)
\end{equation}
has no corresponding global cut-space invariance.  A finite bus-angle
configuration can therefore carry a nonzero harmonic line flow even
when $u=0$.  The winding-one equilibrium below gives the simplest exact
realization of this nonlinear phenomenon.

Accordingly, consider a single ring
\begin{equation}
        G=C_N.
\end{equation}
Its first Betti number is
\begin{equation}
        b_1(C_N)=1,
\end{equation}
so the harmonic edge-flow space is one-dimensional.  With all edges reference-oriented consistently around the ring, let
\begin{equation}
        h=\frac{1}{\sqrt N}(1,\ldots,1)\in\R^E
\end{equation}
be its normalized generator.  The scalar harmonic line-flow readout is
\begin{equation}
        \zeta(t)=h^\top f(\theta(t),u(t)).
\end{equation}

The graph ring should be distinguished from the winding of a phase state.  Let $\operatorname{pr}:\R\to[-\pi,\pi)$ be the principal representative defined in \eqref{eq:principal-representative}.  For a consistently oriented ring, define
\begin{equation}
        q(\theta)
        =
        \frac{1}{2\pi}
        \sum_{e\in E}
        \operatorname{pr}\bigl((B^\top\theta)_e\bigr).
        \label{eq:ring-winding-number}
\end{equation}
For phase configurations whose principal edge increments are chosen continuously around the ring, $q(\theta)$ is an integer and records how often the phases wind around the phase circle when the network ring is traversed once.

Consider zero injections,
\begin{equation}
        p=0,
\end{equation}
and the phase profile
\begin{equation}
        \theta_j^{(1)}=-\frac{2\pi j}{N},
        \qquad j=0,\ldots,N-1,
        \label{eq:ring-winding-one-profile}
\end{equation}
with
\begin{equation}
        \omega=0,
        \qquad
        u=0.
\end{equation}
For every edge, the principal representative of the oriented phase increment equals
\begin{equation}
        \operatorname{pr}\bigl((B^\top\theta^{(1)})_e\bigr)
        =\frac{2\pi}{N}.
\end{equation}
Thus the principal increments sum to $2\pi$ and
\begin{equation}
        q(\theta^{(1)})=1.
\end{equation}
The unreduced real differences still telescope to zero around the closed graph; the nonzero integer in \eqref{eq:ring-winding-number} arises from taking the phases modulo $2\pi$.

Assume that all line coefficients are equal to $a>0$.  Then every edge carries the same nonlinear active flow,
\begin{equation}
        f_e(\theta^{(1)},0)
        =a\sin\frac{2\pi}{N},
\end{equation}
and hence
\begin{equation}
        f(\theta^{(1)},0)
        =a\sin\frac{2\pi}{N}\,\mathbf 1_E
        \in\Ker B.
\end{equation}
This is the nonlinear mechanism that is absent from the linearized
model.  Although $B^\top\theta^{(1)}$ is an exact edge field in
$\im B^\top$ and its unreduced components telescope to zero, one
component differs from the others by $-2\pi$.  Periodicity of the
constitutive law identifies their sine values:
\begin{equation}
        \sin\left(\frac{2\pi}{N}-2\pi\right)
        =\sin\frac{2\pi}{N}.
\end{equation}
The componentwise nonlinear map therefore sends the exact edge-angle
field to the equal-flow vector
$a\sin(2\pi/N)\mathbf 1_E\in\Ker B$.  Thus the nonlinear line-flow map
need not preserve the cut space, whereas the linearized angle-to-flow
map $\vartheta\mapsto W_*B^\top\vartheta$ takes every bus-angle
deviation into the weighted cut space exactly.

Consequently,
\begin{equation}
        Bf(\theta^{(1)},0)=0.
\end{equation}
Substitution into the common swing equations of Subsection~\ref{subsec:nonlinear-swing-closures}, with the line setting held fixed by $\dot u=0$, gives
\begin{equation}
        \dot\theta=0,
        \qquad
        M\dot\omega=0.
\end{equation}
Therefore
\begin{equation}
        (\theta^{(1)},0,0)
\end{equation}
is a stationary solution of the nonlinear passive swing equations.  It is synchronized and nodally balanced, yet
\begin{equation}
        P_Hf(\theta^{(1)},0)
        =f(\theta^{(1)},0)\ne0.
        \label{eq:ring-protected-harmonic-flow}
\end{equation}
For $N>4$, the local phase increment $2\pi/N$ is smaller than $\pi/2$, so this winding-one state is also angle-cohesive in the sense of \eqref{eq:angle-cohesive-operating-point}.  Nonzero global winding is therefore compatible with locally small phase differences.

The preceding calculation makes this missing observable explicit.  Passive swing damping and any controller whose error signal is built only from frequency deviation or nodal imbalance see
\begin{equation}
        \omega=0,
        \qquad
        p-Bf(\theta^{(1)},0)=0.
\end{equation}
They therefore have no direct error signal identifying the stationary circulation.  The topological thermostat instead measures
\begin{equation}
        \zeta(0)=h^\top f(\theta^{(1)},0)\ne0.
\end{equation}
Since $b_1=1$, a single line-actuation direction with nonzero harmonic projection can reach the full harmonic sector.  If $\delta_{e_0}$ is the canonical coordinate vector of one edge, then
\begin{equation}
        h^\top\delta_{e_0}=\frac{1}{\sqrt N}\ne0.
\end{equation}
A schematic one-edge actuator may therefore use the measured signal $\zeta(t)$ to command an opposing phase-shift or line-flow bias.  This reachability observation is not the controller integrated in Subsection~\ref{subsec:exact-ring-trajectory}; the exactly solvable experiment there uses the full ideal projected line actuator.

\begin{remark}[Ring topology, winding, and harmonic flow]
Three circle-related notions occur in this example and should not be identified.  The ring $C_N$ is the topology of the physical graph and implies $b_1=1$, while every ring edge lies in the harmonic edge support:
\begin{equation}
        \operatorname{esupp}(\Ker B)=E.
\end{equation}
Thus the harmonic coordinate space is one-dimensional even though its ideal sensor and actuator patterns are distributed over the whole ring.  The integer $q(\theta)$ in \eqref{eq:ring-winding-number} is a property of a bus-phase configuration.  The vector $P_Hf(\theta,u)$ is the harmonic component of the nonlinear line-flow output.  The constitutive law
\begin{equation}
        f(\theta,u)=\operatorname{diag}(a)\sin(B^\top\theta+u)
\end{equation}
relates these objects, but they are mathematically distinct.

The special choice $p=0$ makes the cycle-free transfer target equal to zero:
\begin{equation}
        f_*=B^\top(BB^\top)^+p=0.
        \label{eq:ring-zero-injection-target}
\end{equation}
Hence every nonzero balanced flow in the present experiment is pure circulation, and suppressing its harmonic component removes the entire line flow.

For a balanced but nonzero injection pattern,
\begin{equation}
        p\ne0,
        \qquad
        \mathbf 1_V^\top p=0,
\end{equation}
the cycle-free representative
\begin{equation}
        f_*=B^\top(BB^\top)^+p
\end{equation}
is generally nonzero and supplies the generators and loads.  Since $\Ker B=\operatorname{span}\{h\}$, every edge flow with the same nodal balance has the form
\begin{equation}
        f=f_*+c h,
        \qquad c\in\R.
        \label{eq:ring-flow-affine-decomposition}
\end{equation}
The scalar $c$ is the additional uniform circulation.  At the graph-flow level, the selective thermostat objective is
\begin{equation}
        f_*+c(0)h\longrightarrow f_*,
\end{equation}
that is, to remove the harmonic contamination while preserving the nonzero source--load transfer.  In the nonlinear swing model this interpretation additionally requires that the target $f_*$ be compatible with the sinusoidal line law in an angle-cohesive branch.  The present zero-injection ring is chosen only to isolate the protected circulation mechanism in its simplest form.
\end{remark}

Coletta, Delabays, Adagideli and Jacquod describe topologically protected loop flows in high-voltage AC grids, and Delabays, Coletta and Jacquod analyze winding-number-labelled phase-locked states in single-loop oscillator networks \cite{ColettaLoopFlows,DelabaysColettaJacquodSingleLoop}.  The example above is read in that spirit: synchronization and nodal balance do not by themselves imply a cycle-free line-flow output.

\subsection{Universal Bregman balance and nonlinear dissipation}
\label{subsec:universal-bregman-dissipation}
\subsubsection*{Effective energy variables and proof strategy}

The ideal and finite-channel closures above are nonlinear ODEs in the full
state
\begin{equation}
        z=(\theta,\omega,u).
\end{equation}
Their branch constitutive law and potential depend on $\theta$ and $u$ only
through the effective edge phase $\eta=B^\top\theta+u$.  Define the
full-to-effective map
\begin{equation}
        \Pi_{\rm eff}(\theta,\omega,u)
        :=(B^\top\theta+u,\omega)
        =(\eta,\omega).
        \label{eq:full-to-effective-state-map}
\end{equation}
If $z(\cdot)$ is a classical solution of either nonlinear closure, then its
effective trajectory is $(\eta(t),\omega(t))=\Pi_{\rm eff}z(t)$, and
differentiating the definition of $\eta$ gives
\begin{equation}
        \dot\eta=B^\top\omega+\dot u.
        \label{eq:effective-edge-phase-kinematics}
\end{equation}
This is a kinematic identity inherited from $\eta=B^\top\theta+u$ and
$\dot\theta=\omega$, not an additional plant equation.

To obtain one energy identity for both feedback laws, temporarily open the
actuator loop and write
\begin{equation}
        v:=\dot u
\end{equation}
for an externally supplied edge-port input.  The corresponding effective
open-port system is
\begin{subequations}\label{eq:universal-line-actuated-swing-system}
\begin{align}
        \dot\eta&=B^\top\omega+v,\label{eq:universal-effective-kinematics}\\
        M\dot\omega+D\omega&=p-Bf(\eta).
\end{align}
\end{subequations}
Every trajectory of either complete nonlinear closure projects to a solution
of this system after its particular actuator law is substituted for $v$.

Let $(\eta_*,0)$ be a synchronous target with
\begin{equation}
        f_*:=f(\eta_*),
        \qquad
        p=Bf_*.
\end{equation}
Introduce the nonlinear potential
\begin{equation}
        U(\eta)=\sum_{e\in E}a_e(1-\cos\eta_e)
\end{equation}
and its Bregman distance from $\eta_*$,
\begin{equation}
        D_U(\eta,\eta_*)
        :=U(\eta)-U(\eta_*)
          -f_*^\top(\eta-\eta_*).
\end{equation}
The target-centred swing storage is
\begin{equation}\label{eq:nonlinear-bregman-energy}
        V(\eta,\omega)
        :=\frac12\omega^\top M\omega+D_U(\eta,\eta_*).
\end{equation}
It is defined on the effective state space.  Its pullback to the full state is
\begin{equation}
        \mathcal V(\theta,\omega,u)
        :=(V\circ\Pi_{\rm eff})(\theta,\omega,u)
        =V(B^\top\theta+u,\omega).
        \label{eq:full-state-bregman-pullback}
\end{equation}
Hence, along a full trajectory $z(t)$,
\begin{equation}
        \frac{\dd}{\dd t}\mathcal V(z(t))
        =\frac{\dd}{\dd t}V(\eta(t),\omega(t)).
        \label{eq:full-effective-storage-derivative}
\end{equation}
At this stage $V$ is a storage function.  It becomes a Lyapunov function for
a particular closure once convexity makes it positive definite in the
effective variables and the chosen feedback law makes its derivative
nonpositive.  Throughout this subsection, $\dot V$ denotes the derivative of
the scalar function $t\mapsto V(\eta(t),\omega(t))$ along a classical
trajectory.  We also use $\preceq$ for the L\"owner partial order on
self-adjoint linear operators: $A\preceq B$ means
$\langle z,(B-A)z\rangle\geq0$ for every $z$.  This notation is used below to
express strong-convexity Hessian bounds and negative semidefiniteness of
channel gains.

The argument now has three steps.  First, derive a controller-independent
balance for arbitrary $v$.  Second, close the edge port with the ideal or
finite-channel law, turning the port supply into a nonpositive quadratic
form.  Third, use convexity on the cohesive branch and LaSalle's invariance
principle to obtain convergence of the effective edge phase, frequency, and
line-flow output.  Since $\Pi_{\rm eff}$ is many-to-one, the corresponding
full-state conclusion concerns an equilibrium fibre rather than a uniquely
determined pair $(\theta,u)$.

\begin{lemma}[Universal Bregman balance]
\label{lem:universal-bregman-balance}
Let $I\subset\R$ be an interval, let $v:I\to X$ be continuous, and let
$(\eta,\omega)$ be a classical solution of
\eqref{eq:universal-line-actuated-swing-system}.  Then
\begin{equation}
        \boxed{
        \dot V
        =-\omega^\top D\omega
         +\bigl(f(\eta)-f_*\bigr)^\top v.}
        \label{eq:universal-bregman-balance}
\end{equation}
\end{lemma}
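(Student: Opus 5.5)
The plan is a direct chain-rule computation of $\dot V$ along a classical solution of \eqref{eq:universal-line-actuated-swing-system}, carried out separately on the kinetic and Bregman parts. No convexity or cohesiveness is needed, so the identity holds on the whole effective state space.

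First, I will record that $U$ is smooth and that its gradient is exactly the line-flow map. Indeed, $\partial U/\partial\eta_e=a_e\sin\eta_e=f_e(\eta)$, so $\nabla U(\eta)=f(\eta)$. Consequently $\nabla_\eta D_U(\eta,\eta_*)=f(\eta)-f_*$. Because $(\eta,\omega)$ is $C^1$, the chain rule then gives
\[
\frac{\dd}{\dd t}D_U(\eta(t),\eta_*)=\bigl(f(\eta)-f_*\bigr)^\top\dot\eta=\bigl(f(\eta)-f_*\bigr)^\top\bigl(B^\top\omega+v\bigr),
\]
where the second equality uses \eqref{eq:universal-effective-kinematics}.

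Second, I will differentiate the kinetic term. Since $M$ is symmetric (diagonal), $\frac{\dd}{\dd t}\tfrac12\omega^\top M\omega=\omega^\top M\dot\omega$. Inserting the swing equation gives
\[
\omega^\top M\dot\omega=-\omega^\top D\omega+\omega^\top\bigl(p-Bf(\eta)\bigr).
\]
Using the target balance $p=Bf_*$, the last term becomes $-\omega^\top B\bigl(f(\eta)-f_*\bigr)$.

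Adding the two pieces, the cross terms $(f(\eta)-f_*)^\top B^\top\omega$ and $-\omega^\top B(f(\eta)-f_*)$ are transposes of the same scalar, so they cancel. What remains is exactly \eqref{eq:universal-bregman-balance}. This cancellation is the only step requiring care, and it rests on two facts. The first is the adjoint pairing between the nodal torque operator $B$ and the edge kinematics $B^\top$. The second is the use of the target-centred Bregman distance rather than $U$ itself: the linear correction $-f_*^\top(\eta-\eta_*)$ converts the constant injection $p$ into $Bf_*$, so that it cancels together with $Bf(\eta)$. Continuity of $v$ ensures that the solution is classical and that the chain rule applies pointwise on $I$.
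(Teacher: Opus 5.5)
Your proposal is correct and matches the paper's proof essentially line by line. Both use $\nabla U=f$ to obtain $\frac{\dd}{\dd t}D_U(\eta,\eta_*)=(f(\eta)-f_*)^\top(B^\top\omega+v)$, substitute $p=Bf_*$ into the derivative of the kinetic term, and cancel the two incidence terms as transposes of the same scalar. One small imprecision: continuity of $v$ does not \emph{ensure} that the solution is classical, since classicality is a hypothesis of the lemma, and the chain rule needs only the $C^1$ regularity of $(\eta,\omega)$.
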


\begin{proof}
Since $\nabla U(\eta)=f(\eta)$ and $\eta_*$ is fixed, the first equation of
\eqref{eq:universal-line-actuated-swing-system} gives
\begin{equation}
        \frac{\dd}{\dd t}D_U(\eta,\eta_*)
        =\bigl(f(\eta)-f_*\bigr)^\top
         \bigl(B^\top\omega+v\bigr).
\end{equation}
Moreover, using $p=Bf_*$,
\begin{align}
        \frac{\dd}{\dd t}\frac12\omega^\top M\omega
        &=\omega^\top\bigl(p-D\omega-Bf(\eta)\bigr)\notag\\
        &=-\omega^\top D\omega
          -\omega^\top B\bigl(f(\eta)-f_*\bigr).
\end{align}
Adding the two identities cancels the incidence terms because
\begin{equation}
        \bigl(f(\eta)-f_*\bigr)^\top B^\top\omega
        =\omega^\top B\bigl(f(\eta)-f_*\bigr),
\end{equation}
which proves \eqref{eq:universal-bregman-balance}.
\end{proof}

\begin{remark}[Equilibrium-relative passivity interpretation]
On a cohesive region where the storage is nonnegative,
\eqref{eq:universal-bregman-balance} shows that the effective swing system is
passive relative to the fixed target from the edge-port input $v$ to the
line-flow-error output
\begin{equation}
        e_f:=f(\eta)-f_*,
\end{equation}
with supply rate
\begin{equation}
        s(v,e_f)=e_f^\top v.
\end{equation}
For either complete closed system, $v=\dot u$.  This is an
equilibrium-relative statement, not a claim of a general two-trajectory
incremental-passivity property.  A negative feedback law is dissipative when
the actuator direction is the metric dual of the measured flow signal,
because the supply term then becomes a negative quadratic form.  The Hodge
decomposition selects which component of the flow error is regulated, while
colocation or an equivalent metric-duality condition supplies the sign.  This
explains why the ideal projected controller and the canonical colocated finite
realization admit exact dissipation identities, whereas harmonic compression
alone does not determine the sign of the complete Bregman supply term.
\end{remark}

\subsubsection*{Ideal metric-compatible closure}

Let $\Sigma_\rho\in\Lin(X)$ be a positive diagonal edge-weight operator and let $P_H^{\Sigma_\rho}\in\Lin(X)$ denote the corresponding weighted Hodge projection onto $\Ker B$, orthogonal with respect to
\begin{equation}
        \langle x,y\rangle_{\Sigma_\rho^{-1}}
        =x^\top\Sigma_\rho^{-1}y.
\end{equation}
This capacity or design metric should be distinguished from the operating-point sensitivity matrix $W_*$ introduced in the swing linearization: $W_*$ comes from differentiating the nonlinear line-flow law, whereas $\Sigma_\rho$ specifies the weighted edge metric used to define $P_H^{\Sigma_\rho}$.  Assume that the target is harmonic-free in this metric,
\begin{equation}
        P_H^{\Sigma_\rho}f_*=0.
\end{equation}
Closing the edge port with the ideal metric-compatible feedback gives
\begin{equation}
        \tau\dot u
        =-\kappa\Sigma_\rho^{-1}P_H^{\Sigma_\rho}
          \bigl(f(\eta)-f_*\bigr),
        \qquad \kappa,\tau>0.
        \label{eq:ideal-nonlinear-bregman-actuator}
\end{equation}
Since $P_H^{\Sigma_\rho}f_*=0$, the target-centred law is equivalently
\begin{equation}
        \tau\dot u
        =-\kappa\Sigma_\rho^{-1}P_H^{\Sigma_\rho}f(\eta).
\end{equation}
For $\Sigma_\rho=W_*$, it coincides with the ideal nonlinear closure introduced above.  In the effective edge-phase variables, the closed system is
\begin{subequations}\label{eq:nonlinear-lyapunov-system}
\begin{align}
        \dot\eta
        &=B^\top\omega-\frac{\kappa}{\tau}
          \Sigma_\rho^{-1}P_H^{\Sigma_\rho}\bigl(f(\eta)-f_*\bigr),\\
        M\dot\omega+D\omega
        &=p-Bf(\eta).
\end{align}
\end{subequations}

The positive-definiteness property required below is another standard consequence of convexity; we record the quantitative form used in the proof.

\begin{lemma}[Bregman bounds on a cohesive set]
\label{lem:bregman-coercivity}
Let $\mathcal O\subset\R^m$ be open and convex, let $U\in C^2(\mathcal O)$, and let $K\subset\mathcal O$ be compact and convex.  Assume that there exist constants $0<\mu\le L$ such that
\begin{equation}
        \mu I
        \preceq
        \nabla^2U(\xi)
        \preceq
        LI,
        \qquad \xi\in K.
        \label{eq:hessian-bounds}
\end{equation}
Then, for all $x,y\in K$, the Bregman distance
\begin{equation}
        D_U(x,y)
        :=U(x)-U(y)-\nabla U(y)^\top(x-y)
\end{equation}
satisfies
\begin{equation}
        \frac\mu2\norm{x-y}^2
        \le
        D_U(x,y)
        \le
        \frac L2\norm{x-y}^2.
        \label{eq:bregman-quadratic-bounds}
\end{equation}
\end{lemma}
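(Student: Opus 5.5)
The natural route is Taylor's theorem with integral remainder along the segment joining $y$ to $x$. Since $K$ is convex, the whole segment $\xi(s)=y+s(x-y)$, $s\in[0,1]$, lies in $K\subset\mathcal O$. On $K$ the Hessian bounds \eqref{eq:hessian-bounds} are available, and $U$ is $C^2$ on a neighbourhood of the segment. Put $d:=x-y$ and $\phi(s):=U(y+sd)$. Then $\phi\in C^2([0,1])$, with
\[
        \phi'(s)=\nabla U(y+sd)^\top d,
        \qquad
        \phi''(s)=d^\top\nabla^2U(y+sd)\,d.
\]

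The Bregman distance is exactly the first-order Taylor remainder of $\phi$:
\[
        D_U(x,y)=\phi(1)-\phi(0)-\phi'(0)
        =\int_0^1(1-s)\,\phi''(s)\,\dd s.
\]
This follows by integrating by parts, or equivalently from the fundamental theorem of calculus applied twice.

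Next I insert the Löwner bounds. For every $s\in[0,1]$,
\[
        \mu\norm d^2\le\phi''(s)\le L\norm d^2 .
\]
Multiplying by the nonnegative weight $1-s$ and integrating, with $\int_0^1(1-s)\,\dd s=\tfrac12$, gives
\[
        \tfrac\mu2\norm{x-y}^2\le D_U(x,y)\le\tfrac L2\norm{x-y}^2,
\]
which is \eqref{eq:bregman-quadratic-bounds}.

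There is no real obstacle here. The only point needing care is justifying that the segment stays inside $K$, and that is exactly where the convexity of $K$ is used. Continuity of $\nabla^2U$ on $\mathcal O$ then makes $\phi''$ continuous, so the integral formula is legitimate. In the later application, $K$ should be a compact convex subset of the cohesive box $\{|\eta_e-\eta_{*,e}|\ \text{small}\}$ on which $\nabla^2U=\operatorname{diag}(a_e\cos\eta_e)$ has the stated bounds. That choice is made outside this lemma.
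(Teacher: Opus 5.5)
Your proposal is correct and is essentially the paper's proof: the paper also sets $g(s)=U(y+sd)$, writes $D_U(x,y)=g(1)-g(0)-g'(0)=\int_0^1(1-t)g''(t)\,\dd t$ by applying the fundamental theorem of calculus twice, and then integrates the Hessian bounds against the weight $1-t$. As in your argument, convexity of $K$ is used only to keep the segment inside the set where those bounds hold.
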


\begin{proof}
Put $d=x-y$ and define $g:[0,1]\to\R$ by
\begin{equation}
        g(s)=U(y+sd).
\end{equation}
Since $K$ is convex, the segment $y+sd$ lies in $K$.  The fundamental theorem of calculus gives
\begin{align}
        D_U(x,y)
        &=g(1)-g(0)-g'(0)
        \notag\\
        &=\int_0^1\bigl(g'(s)-g'(0)\bigr)\,\dd s
        \notag\\
        &=\int_0^1\int_0^s g''(t)\,\dd t\,\dd s
        \notag\\
        &=\int_0^1(1-t)g''(t)\,\dd t.
\end{align}
Moreover,
\begin{equation}
        g''(t)
        =d^\top\nabla^2U(y+td)d.
\end{equation}
The Hessian bounds \eqref{eq:hessian-bounds} therefore imply
\begin{equation}
        \mu\norm d^2
        \le g''(t)
        \le L\norm d^2.
\end{equation}
Integration and $\int_0^1(1-t)\,\dd t=1/2$ yield \eqref{eq:bregman-quadratic-bounds}.
\end{proof}

For the network potential
\begin{equation}
        U(\eta)=\sum_{e\in E}a_e(1-\cos\eta_e),
\end{equation}
one has
\begin{equation}
        \nabla^2U(\eta)
        =\operatorname{diag}(a_e\cos\eta_e).
\end{equation}
Hence every compact convex subset of the cohesive region $\mathcal D$ satisfies the hypotheses of Lemma~\ref{lem:bregman-coercivity}.

\begin{theorem}[Nonlinear edge-energy dissipation]
\label{thm:nonlinear-edge-energy-dissipation}
Assume that $(\eta_*,0)$ is angle-cohesive and that a solution of \eqref{eq:nonlinear-lyapunov-system} remains in a compact forward-invariant subset of
\begin{equation}
        \mathcal D\times\R^n,
        \qquad
        \mathcal D
        =\{\eta\in\R^m:\ |\eta_e|<\pi/2\text{ for all }e\in E\}.
\end{equation}
Then the Bregman swing energy \eqref{eq:nonlinear-bregman-energy} satisfies
\begin{equation}\label{eq:nonlinear-lyapunov-identity}
        \dot V
        =-\omega^\top D\omega
         -\frac{\kappa}{\tau}
          \norm{P_H^{\Sigma_\rho}\bigl(f(\eta)-f_*\bigr)}_{\Sigma_\rho^{-1}}^2
        \le0.
\end{equation}
Consequently, every such trajectory converges to the largest invariant set on the effective state space, on which
\begin{subequations}\label{eq:nonlinear-limit-set}
\begin{align}
        \omega&=0,
        \label{eq:nonlinear-limit-frequency}\\
        P_H^{\Sigma_\rho}\bigl(f(\eta)-f_*\bigr)&=0.
        \label{eq:nonlinear-limit-harmonic}
\end{align}
\end{subequations}
On this invariant set one also has
\begin{equation}
        Bf(\eta)=p.
        \label{eq:nonlinear-limit-balance}
\end{equation}
Equations~\eqref{eq:nonlinear-limit-harmonic} and \eqref{eq:nonlinear-limit-balance}, together with $Bf_*=p$, imply $f(\eta)=f_*$.  Since the componentwise sine law is injective on $\mathcal D$, the invariant set in $(\eta,\omega)$-coordinates is the single point $(\eta_*,0)$.  Hence
\begin{subequations}\label{eq:nonlinear-output-convergence}
\begin{align}
        f(\eta(t))&\longrightarrow f_*,
        \label{eq:nonlinear-flow-convergence}\\
        \eta(t)&\longrightarrow\eta_*,
        \label{eq:nonlinear-edge-phase-convergence}\\
        \omega(t)&\longrightarrow0.
        \label{eq:nonlinear-frequency-convergence}
\end{align}
\end{subequations}
These conclusions concern the effective edge phase and the line-flow output.  Without additional constraints they do not determine $\theta$ and $u$ separately: limiting pairs lie in the fibre $B^\top\theta+u=\eta_*$, and the uniform phase shift remains a gauge symmetry.
\end{theorem}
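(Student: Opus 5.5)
The plan has three steps: substitute the actuator law into the universal balance, apply LaSalle's invariance principle on the compact forward-invariant set, and identify the limit set using the Hodge decomposition and injectivity of the sine law.

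\emph{Step 1: dissipation identity.} A trajectory of \eqref{eq:nonlinear-lyapunov-system} is a solution of the open-port system \eqref{eq:universal-line-actuated-swing-system} with the continuous input
\[
v=-\tfrac{\kappa}{\tau}\Sigma_\rho^{-1}P_H^{\Sigma_\rho}e_f,
\qquad
e_f:=f(\eta)-f_*.
\]
Lemma~\ref{lem:universal-bregman-balance} then gives
\[
\dot V=-\omega^\top D\omega-\tfrac{\kappa}{\tau}\,e_f^\top\Sigma_\rho^{-1}P_H^{\Sigma_\rho}e_f .
\]
Lemma~\ref{lem:weighted-orthogonal-projection}, applied with $Q=\Sigma_\rho^{-1}$, gives $\langle e_f,P_H^{\Sigma_\rho}e_f\rangle_{\Sigma_\rho^{-1}}=\norm{P_H^{\Sigma_\rho}e_f}_{\Sigma_\rho^{-1}}^2$. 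This is \eqref{eq:nonlinear-lyapunov-identity}, and nonpositivity follows from $D>0$.

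\emph{Step 2: LaSalle.} The closed effective system is autonomous with a smooth vector field, and the trajectory stays in a compact forward-invariant set $\mathcal K\subset\mathcal D\times\R^n$. Hence its $\omega$-limit set is nonempty, compact and invariant. Since $V$ is continuous and nonincreasing on $\mathcal K$, it is constant on the limit set. LaSalle's invariance principle therefore places the limit set inside the largest invariant subset of $\{\dot V=0\}$. By positivity of $D$ and of $\Sigma_\rho^{-1}$, this is exactly the set where \eqref{eq:nonlinear-limit-frequency} and \eqref{eq:nonlinear-limit-harmonic} hold. The argument does not need positive-definiteness of $V$. Lemma~\ref{lem:bregman-coercivity} is only a side remark: $V$ is coercive on compact convex cohesive subsets, which justifies calling it a Lyapunov function.

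\emph{Step 3: identifying the invariant set.} This is the main, though mild, obstacle. On the invariant set, $\omega\equiv0$ along the trajectory, so $\dot\omega=0$, and the swing equation gives $Bf(\eta)=p$, which is \eqref{eq:nonlinear-limit-balance}. Then $B e_f=p-p=0$, so $e_f\in\Ker B$ and hence $e_f=P_H^{\Sigma_\rho}e_f=0$. Thus $f(\eta)=f_*$.

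On $\mathcal D$ each map $\eta_e\mapsto a_e\sin\eta_e$ is strictly increasing, so $\eta=\eta_*$. Here one must check that $\eta_*\in\mathcal D$: angle cohesion gives $|\operatorname{pr}\eta_{*,e}|\le\gamma<\pi/2$, and we take the representative of $\eta_*$ in $\mathcal D$. The invariant set is therefore the single point $(\eta_*,0)$.

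Since the limit set is nonempty and contained in this singleton, the trajectory converges to $(\eta_*,0)$. This gives \eqref{eq:nonlinear-edge-phase-convergence} and \eqref{eq:nonlinear-frequency-convergence}, and \eqref{eq:nonlinear-flow-convergence} follows by continuity of $f$.

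The closing fibre remark follows because $\Pi_{\rm eff}$ depends only on $B^\top\theta+u$, so limits constrain $(\theta,u)$ only through $\eta_*$. The kernel of $B^\top$ then supplies the uniform phase shift.
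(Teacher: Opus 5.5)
Your proposal is correct and follows the paper's proof essentially step by step. Like the paper, you close the edge port in the universal Bregman balance, apply the weighted projection identity, invoke LaSalle on the compact forward-invariant set, and identify the invariant set via $\dot\omega=0$, $e_f\in\Ker B$, and injectivity of the sine law on $\mathcal D$. The only deviation is that the paper first uses Lemma~\ref{lem:bregman-coercivity} to make $V$ positive definite on a compact convex $K\subset\mathcal D$; as you rightly observe, this is not needed for the LaSalle step itself.
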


\begin{proof}
The edge-phase projection of the assumed forward-invariant set is compact and stays a positive distance from the boundary of the convex box $\mathcal D$.  Hence its convex hull together with $\eta_*$ is contained in a compact convex set $K\subset\mathcal D$.  Lemma~\ref{lem:bregman-coercivity}, together with positive definiteness of $M$, shows that $V$ is positive definite with respect to $(\eta,\omega)=(\eta_*,0)$ on $K\times\R^n$.  Closing the port by setting $v=\dot u$ according to \eqref{eq:ideal-nonlinear-bregman-actuator} in the universal balance \eqref{eq:universal-bregman-balance} gives
\begin{align}
        \dot V
        &=-\omega^\top D\omega
          -\frac{\kappa}{\tau}
          \bigl(f(\eta)-f_*\bigr)^\top
          \Sigma_\rho^{-1}P_H^{\Sigma_\rho}
          \bigl(f(\eta)-f_*\bigr)\notag\\
        &=-\omega^\top D\omega
          -\frac{\kappa}{\tau}
          \norm{P_H^{\Sigma_\rho}\bigl(f(\eta)-f_*\bigr)}_{\Sigma_\rho^{-1}}^2,
\end{align}
where the weighted projection identity \eqref{eq:weighted-projection-energy-identity} was used in the last step.  This proves \eqref{eq:nonlinear-lyapunov-identity}.

LaSalle's invariance principle \cite{KhalilNonlinear} gives convergence to the largest invariant subset of $\{\dot V=0\}$.  Equations~\eqref{eq:nonlinear-limit-frequency} and \eqref{eq:nonlinear-limit-harmonic} hold there.  Invariance also requires $\dot\omega=0$, and hence \eqref{eq:nonlinear-limit-balance}.  Put $e_f=f(\eta)-f_*$.  Equation~\eqref{eq:nonlinear-limit-balance} and the target identity $Bf_*=p$ give $Be_f=0$, so $e_f\in\Ker B$.  Since $P_H^{\Sigma_\rho}$ is the projection onto $\Ker B$, one has $e_f=P_H^{\Sigma_\rho}e_f$.  Equation~\eqref{eq:nonlinear-limit-harmonic} therefore yields $e_f=0$.  Finally, $f_e(\eta_e)=a_e\sin\eta_e$ is strictly increasing on $(-\pi/2,\pi/2)$, so $f(\eta)=f_*$ implies $\eta=\eta_*$.  This proves \eqref{eq:nonlinear-output-convergence}.
\end{proof}

\begin{remark}[A sufficient cohesive energy bound]
The compact forward-invariance hypothesis can be verified by the standard Lyapunov sublevel-set argument.  Let $\Omega\subset\mathcal D$ be bounded and open, with $\eta_*\in\Omega$ and $\overline\Omega\subset\mathcal D$, and put
\begin{equation}
        c_\Omega
        :=\min_{\eta\in\partial\Omega}D_U(\eta,\eta_*).
        \label{eq:cohesive-boundary-energy}
\end{equation}
Strict convexity of $U$ on $\mathcal D$ gives $c_\Omega>0$.  If $\eta(0)\in\Omega$ and
\begin{equation}
        V(\eta(0),\omega(0))<c_\Omega,
        \label{eq:cohesive-energy-certificate}
\end{equation}
then monotonicity of $V$ prevents the trajectory from reaching $\partial\Omega$.  Hence the sublevel set
\begin{equation}
        \left\{
        (\eta,\omega)\in\overline\Omega\times\R^n:
        V(\eta,\omega)\le V(\eta(0),\omega(0))
        \right\}
        \label{eq:cohesive-invariant-sublevel}
\end{equation}
is forward invariant and compact: $\eta$ is confined to $\overline\Omega$, while positive definiteness of $M$ bounds $\omega$.  Thus \eqref{eq:cohesive-energy-certificate} is a directly checkable sufficient condition for the hypothesis of Theorem~\ref{thm:nonlinear-edge-energy-dissipation}.  The same certificate applies to the colocated finite-channel law below, for which $V$ is also nonincreasing.  Failure of the inequality makes this particular certificate inconclusive; it does not imply loss of cohesiveness.
\end{remark}

\begin{remark}[Synchronization and topological selectivity]
The theorem is not a phase-consensus statement.  It describes a phase-locked transfer equilibrium: the frequencies synchronize, the nodal transfer balance is satisfied, and the weighted harmonic line-flow error vanishes.  The actuator contributes a negative square only through the selected harmonic output; the useful cut-space transfer does not enter the additional ideal dissipation term.
\end{remark}

\subsubsection*{Colocated finite-channel closure}

\begin{corollary}[Dissipation of colocated finite-channel feedback]
\label{cor:finite-channel-bregman-dissipation}
Let $S\in\Lin(X,\R^{n_s})$ be an edge-flow sensor map and use its dual colocated actuator
\begin{equation}
        C=S^*\in\Lin(\R^{n_s},X).
        \label{eq:colocated-adjoint-actuator}
\end{equation}
With the Euclidean pairings used here, the coordinate matrix of $S^*$ is $S^\top$.  If $K=K^*\preceq0$ and
\begin{equation}
        \tau\dot u=S^*KS\bigl(f(\eta)-f_*\bigr),
        \label{eq:general-colocated-finite-bregman-law}
\end{equation}
then
\begin{equation}
        \dot V
        =-\omega^\top D\omega
         +\frac1\tau
          \bigl(S(f(\eta)-f_*)\bigr)^\top
          K\bigl(S(f(\eta)-f_*)\bigr)
        \le0.
        \label{eq:general-colocated-finite-bregman-dissipation}
\end{equation}

More specifically, let $H_{\Sigma_\rho}$ have $\Sigma_\rho^{-1}$-orthonormal columns spanning $\Ker B$, assume $n_s=b_1$ and that
\begin{equation}
        G:=SH_{\Sigma_\rho}
\end{equation}
is invertible, and choose the canonical gain
\begin{equation}
        K=-\kappa(GG^\top)^{-1}.
\end{equation}
Then
\begin{equation}
        \boxed{
        \dot V
        =-\omega^\top D\omega
         -\frac{\kappa}{\tau}
          \norm{G^{-1}S\bigl(f(\eta)-f_*\bigr)}^2
        \le0.}
        \label{eq:finite-channel-nonlinear-dissipation}
\end{equation}
If the trajectory remains in a compact forward-invariant subset of the cohesive region, then its line-flow output converges to $f_*$.
\end{corollary}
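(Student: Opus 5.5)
The plan is to close the edge port in the universal Bregman balance, Lemma~\ref{lem:universal-bregman-balance}, with $v=\dot u$ given by \eqref{eq:general-colocated-finite-bregman-law}. That is, substitute $v=\tau^{-1}S^*KS(f(\eta)-f_*)$. With Euclidean pairings, $S^*=S^\top$, so the supply term becomes
\[
\bigl(f(\eta)-f_*\bigr)^\top\tau^{-1}S^\top KS\bigl(f(\eta)-f_*\bigr)=\tfrac1\tau\,y^\top Ky,
\qquad y:=S\bigl(f(\eta)-f_*\bigr).
\]
This gives \eqref{eq:general-colocated-finite-bregman-dissipation}. Its nonpositivity follows directly from $K=K^*\preceq0$ together with $D>0$. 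Metric duality of the actuator to the sensor is exactly what turns the port supply into a quadratic form, as anticipated in the passivity remark.

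For the canonical gain, $n_s=b_1$ and $G=SH_{\Sigma_\rho}$ is square and invertible. Hence $GG^\top$ is symmetric positive definite, and $K=-\kappa(GG^\top)^{-1}$ is symmetric and $\preceq0$, so the first part applies. Moreover,
\[
y^\top(GG^\top)^{-1}y=y^\top G^{-\top}G^{-1}y=\norm{G^{-1}y}^2,
\]
which yields \eqref{eq:finite-channel-nonlinear-dissipation}.

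For the convergence assertion I would mirror the proof of Theorem~\ref{thm:nonlinear-edge-energy-dissipation}. First, the closed loop in effective variables is autonomous:
\[
\dot\eta=B^\top\omega+\tau^{-1}S^\top KS\bigl(f(\eta)-f_*\bigr),
\qquad
M\dot\omega+D\omega=p-Bf(\eta).
\]
Second, the compact forward-invariant set lies in a compact convex $K\subset\mathcal D$ containing $\eta_*$. On such a set Lemma~\ref{lem:bregman-coercivity} makes $V$ positive definite about $(\eta_*,0)$, and LaSalle's principle applies. On the largest invariant subset of $\{\dot V=0\}$ we get $\omega=0$ and $G^{-1}Se_f=0$, where $e_f=f(\eta)-f_*$; hence $Se_f=0$. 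Invariance of $\omega\equiv0$ forces $Bf(\eta)=p$, so $Be_f=0$ and therefore $e_f=H_{\Sigma_\rho}c$ for some $c\in\R^{b_1}$. Then $0=Se_f=Gc$, and invertibility of $G$ gives $c=0$, i.e.\ $e_f=0$. Finally, strict monotonicity of the sine on $(-\pi/2,\pi/2)$ gives $\eta=\eta_*$, and so $f(\eta(t))\to f_*$.

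The main obstacle I expect is this invariant-set step. Unlike the ideal case, the dissipation term only sees the sensed combination $Se_f$, not the full harmonic projection. The argument must therefore combine the nodal-balance information $Be_f=0$ with the rank condition on $SH_{\Sigma_\rho}$ to recover $e_f=0$. A secondary point to state carefully is that the conclusion concerns the effective state $(\eta,\omega)$ and the line-flow output, not $(\theta,u)$ separately.
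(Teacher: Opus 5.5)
Your proposal is correct and follows the paper's proof essentially step for step. You close the port in Lemma~\ref{lem:universal-bregman-balance} with the colocated law, rewrite the supply term via the adjoint identity and $(GG^\top)^{-1}=G^{-\top}G^{-1}$, and then use LaSalle with $\omega=0$, $Se_f=0$, $Be_f=0$ and the invertibility of $G$ to force $e_f=0$. The differences are cosmetic. You additionally deduce $\eta\to\eta_*$ and spell out the coercivity step borrowed from Theorem~\ref{thm:nonlinear-edge-energy-dissipation}. You also reuse the letter $K$ for the compact convex set, which clashes with the gain.
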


\begin{proof}
Equation~\eqref{eq:general-colocated-finite-bregman-dissipation} follows by closing the port with $v=\dot u$ from \eqref{eq:general-colocated-finite-bregman-law} in \eqref{eq:universal-bregman-balance} and using the adjoint identity
\begin{equation}
        \langle e_f,S^*KSe_f\rangle_X
        =\langle Se_f,KSe_f\rangle_{\R^{n_s}}.
\end{equation}  For the canonical gain,
\begin{equation}
        (Se_f)^\top K(Se_f)
        =-\kappa\norm{G^{-1}Se_f}^2,
        \qquad e_f=f(\eta)-f_*,
\end{equation}
which gives \eqref{eq:finite-channel-nonlinear-dissipation}.  By LaSalle's invariance principle \cite{KhalilNonlinear}, on the largest invariant subset of $\{\dot V=0\}$ one has $\omega=0$ and $Se_f=0$.  Invariance gives $Be_f=0$, so $e_f\in\Ker B$ and may be written $e_f=H_{\Sigma_\rho} c$.  Since $0=Se_f=Gc$ and $G$ is invertible, $c=0$ and hence $f(\eta)=f_*$.  The conclusion concerns the line-flow output and does not, without further assumptions, determine $\theta$ and $u$ separately.
\end{proof}

\subsubsection*{General finite-channel dissipativity and noncolocation}

\begin{remark}[Dissipativity, selectivity, and noncolocation]
The following observation is standard in finite-dimensional control and matrix analysis; we record it because the same algebra is used here in the Hodge, sensor--actuator, and power-system formulations.  For $L\in\Lin(X)$, define
\begin{equation}
        \operatorname{sym}(L)
        :=\frac12(L+L^*).
\end{equation}
The skew-adjoint part contributes nothing to a real quadratic form, and hence
\begin{equation}
        \langle x,Lx\rangle_X
        =\langle x,\operatorname{sym}(L)x\rangle_X.
        \label{eq:symmetric-part-quadratic-form}
\end{equation}
Consequently, $L$ contributes nonpositive power for every $x$ precisely when $\operatorname{sym}(L)\preceq0$.  In the colocated case $C=S^*$,
\begin{equation}
        \operatorname{sym}(CKS)
        =S^*\operatorname{sym}(K)S,
        \label{eq:colocated-symmetric-part}
\end{equation}
so $K=K^*\preceq0$ implies $CKS=S^*KS\preceq0$.  The resulting edge-space operator is generally only semidefinite because vectors in $\Ker S$ are left undamped.  For noncolocated $C$, the sign of $K$ alone gives no corresponding conclusion.

Closing the edge port with a general finite realization
\begin{equation}
        \tau\dot u=CKS\bigl(f(\eta)-f_*\bigr)
\end{equation}
gives
\begin{equation}
        \dot V
        =-\omega^\top D\omega
         +\frac1\tau e_f^\top\operatorname{sym}(CKS)e_f.
        \label{eq:general-finite-bregman-supply}
\end{equation}
Thus exact harmonic compression, which constrains only the harmonic-to-harmonic block of $CKS$, does not imply dissipation of the complete Bregman storage.  A noncolocated realization may still be dissipative if $\operatorname{sym}(CKS)\preceq0$, but this is an additional full-operator design condition rather than a consequence of the Betti-number rank criterion.

Dissipativity and Hodge selectivity are also distinct.  Target-subtracted global damping is dissipative but acts on both cut and harmonic errors.  The ideal projector is both dissipative and perfectly selective.  The canonical colocated finite controller has exact harmonic compression and a decreasing Bregman energy, but its selected edge signals generally contain cut components; transfer and frequency transients may therefore occur while the total target-centred storage decreases.  Bregman monotonicity does not imply that each Hodge component is monotone.
\end{remark}

\subsection{An exactly solvable nonlinear controlled ring trajectory}
\label{subsec:exact-ring-trajectory}

\subsubsection*{Physical initial state and control objective}

We return to the winding-one equilibrium of
Subsection~\ref{subsec:winding-one-ring-equilibrium} and apply the full
ideal projected line actuator.  The condition $p=0$ means that no bus
injects or withdraws external power; it does not imply that every line
flow vanishes.  At a stationary state with $\omega=0$, the nodal swing
balance requires only
\begin{equation}
        Bf=0.
\end{equation}
A nonzero flow in $\Ker B$ may therefore circulate around the ring,
entering and leaving every bus in equal amounts.

The bus phases in $\theta^{(1)}$ are not all equal.  Rather, the
principal phase difference on every reference-oriented edge is
\begin{equation}
        \delta=\frac{2\pi}{N},
\end{equation}
so their sum around the ring is $2\pi$ and the bus-phase winding is one.
With zero initial actuator offset and equal unit line coefficients, the
corresponding line flow is
\begin{equation}
        f(\theta^{(1)},0)
        =\sin\delta\,\mathbf 1_E
        \in\Ker B\setminus\{0\}.
\end{equation}
Thus the initial state is a phase-locked winding-one configuration
carrying a uniform pure active-power circulation.  Since the prescribed
target is also required to be cycle-free,
\begin{equation}
        Bf_*=p=0,
        \qquad
        P_Hf_*=0
\end{equation}
imply
\begin{equation}
        f_*\in\Ker B\cap\im B^\top=\{0\},
        \qquad
        f_*=0,
\end{equation}
consistently with \eqref{eq:ring-zero-injection-target}.  The control
objective is therefore to remove the circulating line flow while
leaving the bus-phase configuration, and hence its winding number,
unchanged.

The ring has
\begin{equation}
        N=8,
        \qquad
        p=0,
        \qquad
        M=I_8,
        \qquad
        D=0.8I_8,
\end{equation}
and all line coefficients are equal to one.  The common nonlinear swing
equations are
\begin{subequations}\label{eq:ring-full-nonlinear-system}
\begin{align}
        \dot\theta&=\omega,\\
        \dot\omega&=-0.8\omega-Bf(\theta,u),\\
        f(\theta,u)&=\sin(B^\top\theta+u),
\end{align}
\end{subequations}
where the sine is componentwise.  The initial state is
\begin{equation}
        \theta(0)=\theta^{(1)},
        \qquad
        \omega(0)=0,
        \qquad
        u(0)=0,
        \label{eq:ring-controlled-initial-state}
\end{equation}
with $\theta^{(1)}$ given by \eqref{eq:ring-winding-one-profile}.

We compare three line-actuator laws.  Passive swing damping leaves the
line setting fixed,
\begin{equation}
        \dot u=0.
\end{equation}
The global and topological line controllers are, respectively,
\begin{equation}
        \dot u
        =
        -\frac{\kappa}{\tau}W_0^{-1}f(\theta,u),
        \label{eq:ring-global-line-law}
\end{equation}
\begin{equation}
        \dot u
        =
        -\frac{\kappa}{\tau}W_0^{-1}P_Hf(\theta,u),
        \label{eq:ring-topological-line-law}
\end{equation}
where
\begin{equation}
        \delta=\frac{2\pi}{N}=\frac{\pi}{4},
        \qquad
        W_0=\cos\delta\,I_8,
        \qquad
        \kappa=1.3,
        \qquad
        \tau=0.7.
\end{equation}
Equation~\eqref{eq:ring-topological-line-law} is the full ideal projected
actuator, not the schematic one-edge reachability construction discussed
in Subsection~\ref{subsec:winding-one-ring-equilibrium}.  Since
$P_H=hh^\top$ on the ring, it factors through one scalar harmonic sensor
and one scalar actuator command, but the sensor $h^\top$ and actuator
pattern $h$ are distributed over all edges; see
Remark~\ref{rem:minimal-rank-local-realization}.

\subsubsection*{Invariant scalar dynamics}

The trajectories can be obtained without a numerical ODE solver.  In
the full state space of $(\theta,\omega,u)$, define the one-dimensional
affine line
\begin{equation}
        \mathcal M_1
        :=
        \left\{
        \bigl(\theta^{(1)},0,s\mathbf 1_E\bigr):s\in\R
        \right\}.
        \label{eq:ring-winding-one-invariant-manifold}
\end{equation}
For a state in $\mathcal M_1$, put
\begin{equation}
        \psi:=\delta+s,
\end{equation}
and call $\psi$ the common branch phase, understood modulo $2\pi$.
Every component of
\begin{equation}
        \eta=B^\top\theta^{(1)}+s\mathbf 1_E
\end{equation}
is congruent to $\psi$ modulo $2\pi$.  Therefore
\begin{equation}
        f(\theta^{(1)},s\mathbf 1_E)
        =
        \sin\psi\,\mathbf 1_E
        \in\Ker B.
        \label{eq:ring-symmetric-flow}
\end{equation}
At every point of $\mathcal M_1$, one has $\omega=0$ and $Bf=0$, so
\eqref{eq:ring-full-nonlinear-system} gives $\dot\theta=0$ and
$\dot\omega=0$.  The passive actuator law gives $\dot u=0$, whereas
each active actuator law produces a vector proportional to
$\mathbf 1_E$.  Hence all three vector fields are tangent to
$\mathcal M_1$, and this affine line is invariant.  Since the initial
state \eqref{eq:ring-controlled-initial-state} belongs to
$\mathcal M_1$, uniqueness yields
\begin{equation}
        \theta(t)=\theta^{(1)},
        \qquad
        \omega(t)=0,
        \qquad
        u(t)=s(t)\mathbf 1_E,
        \qquad
        s(0)=0.
        \label{eq:ring-symmetric-invariant-trajectory}
\end{equation}
The nodal inertia remains part of the plant; it is simply not excited
on this affine line because the line flow is divergence-free and
produces no nodal accelerating imbalance.  This exact freezing relies
on the symmetric ring, equal line coefficients, zero injections, zero
initial frequency, and, in the active cases, the distributed actuator
pattern; it is not a generic property of local or asymmetric line
control.

The passive and active nonlinear systems have different dynamics on the
same invariant affine line.  Under passive swing damping, $\dot s=0$,
so every point of $\mathcal M_1$ is stationary.  Under either active
line controller, $P_Hf=f$ on $\mathcal M_1$, so the global and
topological laws
\eqref{eq:ring-global-line-law}--\eqref{eq:ring-topological-line-law}
coincide exactly.  Writing
\begin{equation}
        \psi(t)=\delta+s(t),
\end{equation}
the actuator equation reduces to
\begin{equation}
        \dot\psi=-\gamma\sin\psi,
        \qquad
        \gamma=\frac{\kappa}{\tau\cos\delta},
        \qquad
        \psi(0)=\delta.
        \label{eq:ring-scalar-controlled-equation}
\end{equation}
Separating variables gives the closed formula
\begin{equation}
        \tan\frac{\psi(t)}{2}
        =
        \tan\frac{\delta}{2}\,e^{-\gamma t}.
        \label{eq:ring-scalar-exact-solution}
\end{equation}
Thus
\begin{equation}
        \psi(t)\longrightarrow0,
        \qquad
        s(t)\longrightarrow-\delta,
        \qquad
        f(\theta(t),u(t))\longrightarrow0.
\end{equation}
The limiting controlled state is therefore
\begin{equation}
        \theta_\infty=\theta^{(1)},
        \qquad
        \omega_\infty=0,
        \qquad
        u_\infty=-\frac{\pi}{4}\mathbf 1_E,
        \label{eq:ring-controlled-limit-state}
\end{equation}
up to a uniform bus-phase shift and equivalent $2\pi$ representatives
of the edge phases.  The controller does not unwind the bus phases.
Instead, the uniform limiting actuator offset cancels the bus-generated
phase difference inside the branch constitutive law.  Accordingly,
\begin{equation}
        q(\theta(t))=1
\end{equation}
for all $t$, while the effective line-flow circulation is driven to
zero.  This illustrates why a cycle-free line-flow target need not have
zero bus-angle winding when an independent line-actuator offset is
present.

The same mechanism has a direct storage interpretation.  Along
$\mathcal M_1$, the kinetic term in the target-centred storage of
Subsection~\ref{subsec:universal-bregman-dissipation} vanishes.  With unit line coefficients and the
zero-flow target, the line-potential contribution is
\begin{equation}
        V(t)=N\bigl(1-\cos\psi(t)\bigr).
\end{equation}
Consequently,
\begin{equation}
        \dot V(t)
        =f(\theta(t),u(t))^\top\dot u(t)
        =-N\gamma\sin^2\psi(t)
        \leq0.
\end{equation}
Thus, in the idealized model, the line-actuator port removes the stored
target-centred line-potential energy while the nodal kinetic energy
remains identically zero.

The quadratic cycle-flow diagnostic plotted below is
\begin{equation}
        E_H(t)
        =
        \norm{P_Hf(\theta(t),u(t))}^2
        =
        N\sin^2\psi(t).
        \label{eq:ring-exact-cycle-energy}
\end{equation}
For the active laws it decreases monotonically, because
\begin{equation}
        \dot E_H(t)
        =
        -2N\gamma\sin^2\psi(t)\cos\psi(t)<0
\end{equation}
for $0<\psi(t)\le\delta<\pi/2$.  The passive trajectory has
$\psi(t)=\delta$ and $E_H(t)=4$ for all time.

\begin{figure}[htbp]
\centering
\begin{subfigure}{0.48\textwidth}
\centering
\includegraphics[width=\textwidth]{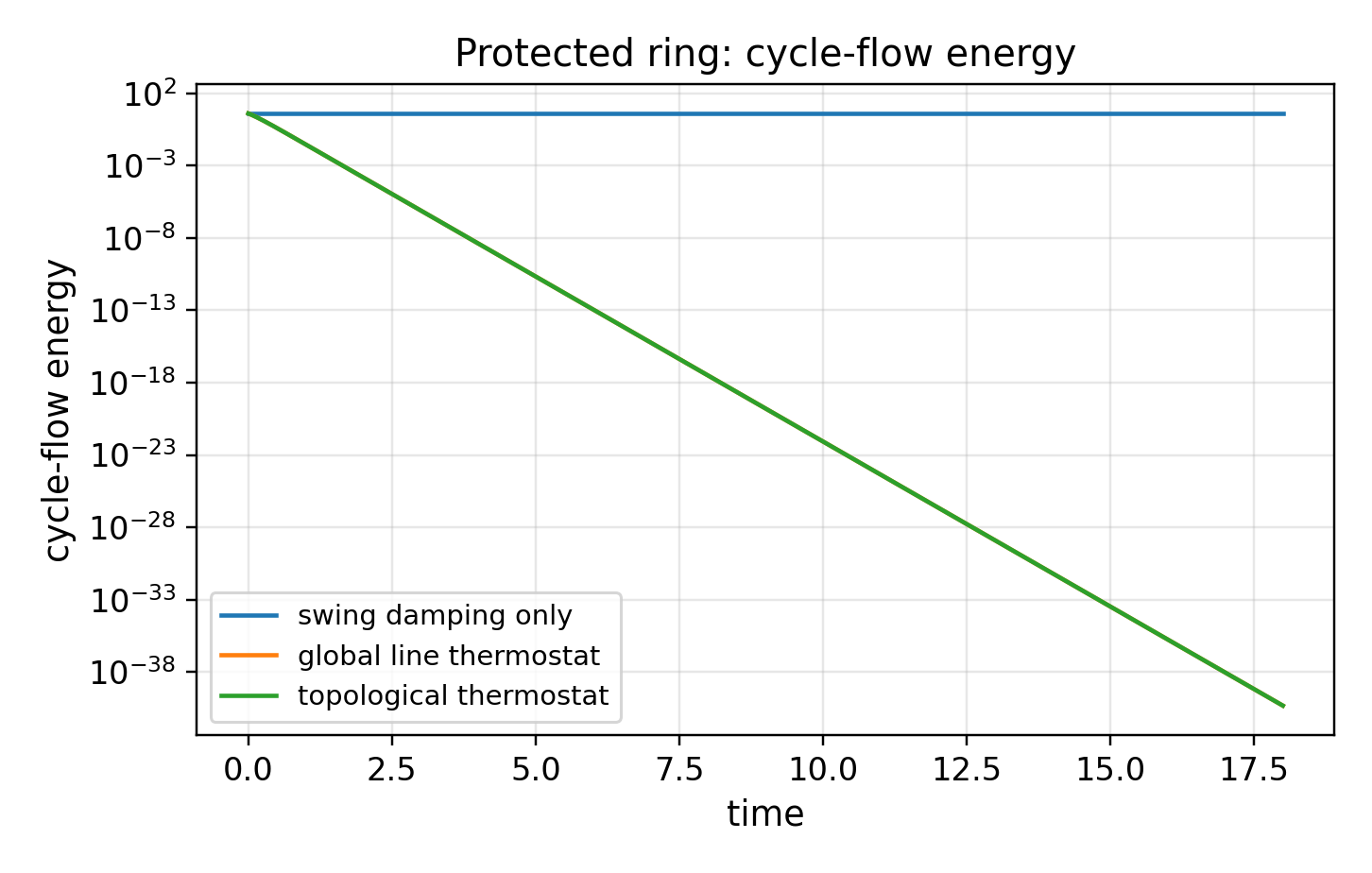}
\caption{Cycle-flow energy.}
\end{subfigure}\hfill
\begin{subfigure}{0.48\textwidth}
\centering
\includegraphics[width=\textwidth]{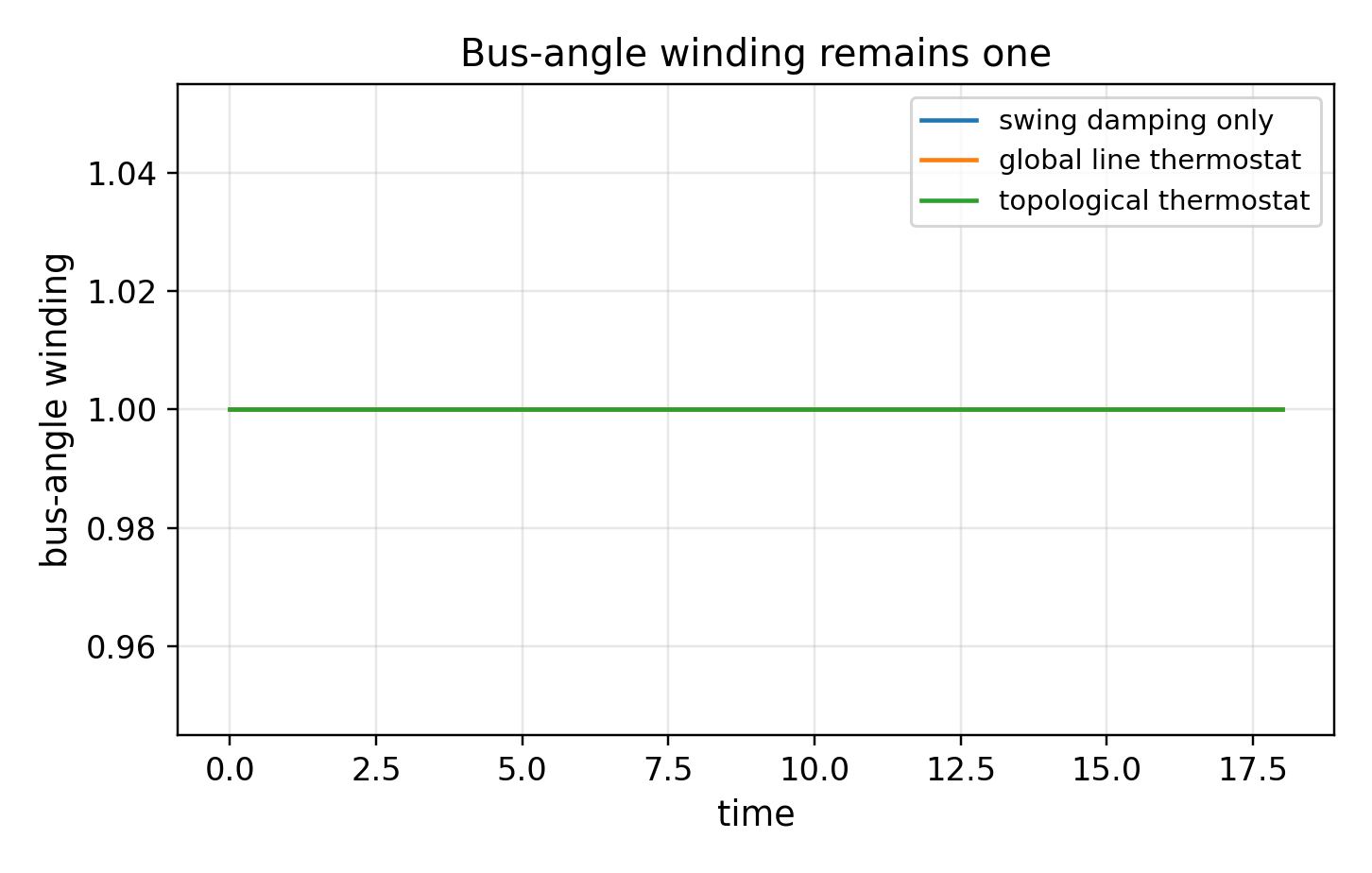}
\caption{Bus-angle winding.}
\end{subfigure}
\caption{Exact nonlinear controlled trajectory on an eight-bus ring.  The global and topological controllers coincide on the symmetric invariant trajectory: cycle-flow energy decays while the bus-angle winding remains one.}
\label{fig:swing-ring}
\end{figure}

\begin{table}[H]
\centering
\small
\resizebox{\textwidth}{!}{%
\begin{tabular}{lrrrr}
\toprule
Controller & Final cycle energy & Integrated cycle energy & Reduction vs. passive & Max frequency norm\\
\midrule
Swing damping only & 4.000 & 72.000 & 0.0\% & 0.000\\
Global line thermostat & $4.75\cdot10^{-41}$ & 0.892 & 98.76\% & 0.000\\
Topological thermostat & $4.75\cdot10^{-41}$ & 0.892 & 98.76\% & 0.000\\
\bottomrule
\end{tabular}%
}
\caption{Metrics for the exact nonlinear ring trajectory over $0\le t\le18$.  The global and topological line controllers coincide on the symmetric invariant trajectory, while the bus-frequency state remains identically zero.  The tiny terminal cycle energy is the value of the analytic formula, not a numerical solver residual.}
\label{tab:swing-ring-metrics}
\end{table}

\section{Topological line-flow control on the IEEE 14-bus network}
\label{sec:ieee14-application}
The preparatory discussion in Section~\ref{sec:transmission-preparation} motivates the transmission-grid setting.  We now specify the IEEE 14 topology, branch data, operating point, and fixed seven-channel architecture, verify exact harmonic compression of the localized controller, and compare the ideal and finite-channel nonlinear swing responses.

The graph has $n=14$ buses, $m=20$ branches, and $b_1=7$.  For the localized comparisons we use the ordered seven-edge architecture
\begin{equation}
        R=\{1,3,7,8,11,12,20\}.
        \label{eq:ieee14-fixed-channel-list}
\end{equation}

\begin{figure}[htbp]
\centering
\includegraphics[width=0.78\textwidth]{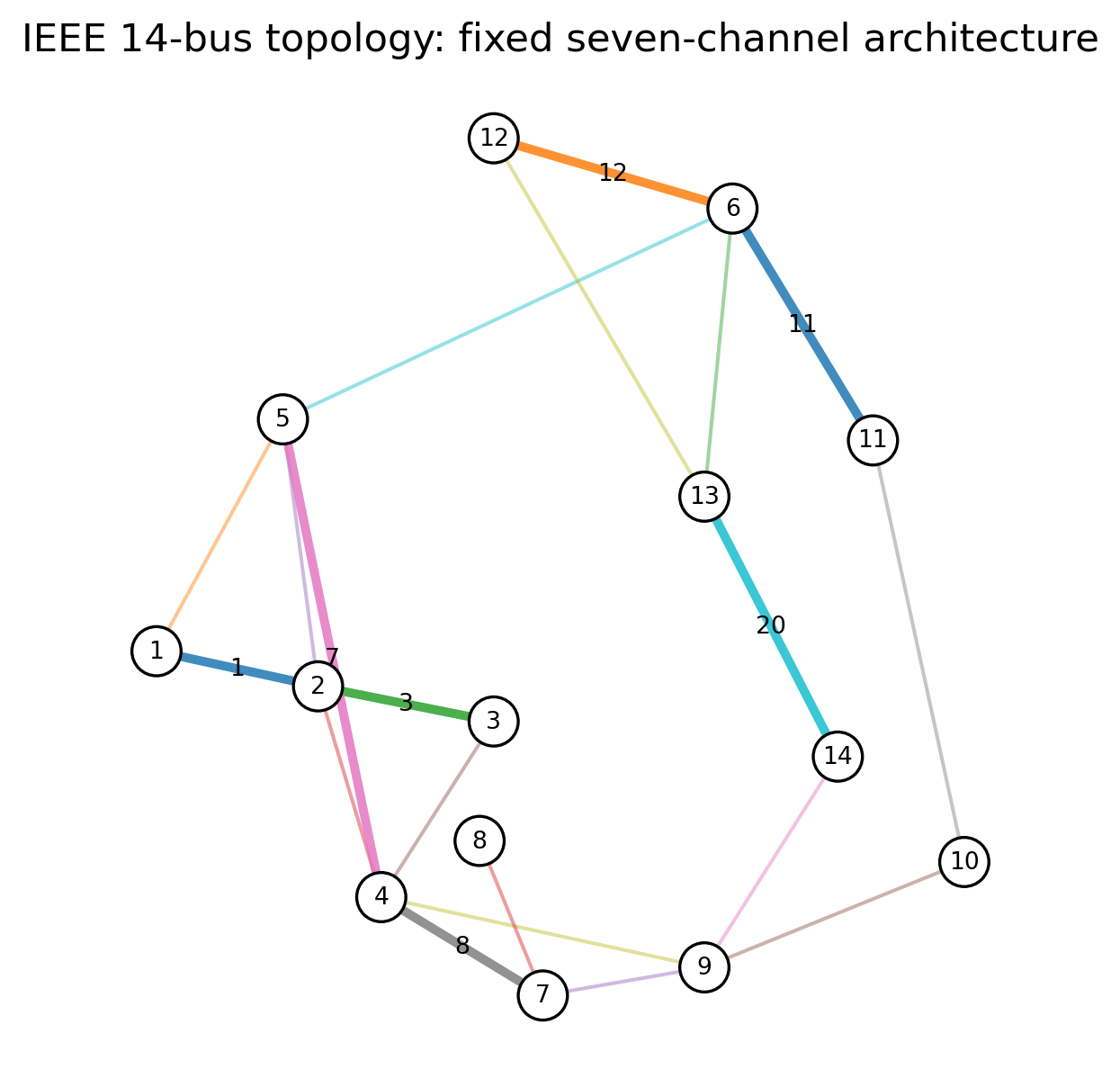}
\caption{IEEE 14-bus topology and the fixed ordered seven-edge sensor--actuator architecture $R=\{1,3,7,8,11,12,20\}$.  The same physical channels are used for the uncoupled selected-line comparison and for the rank-seven Hodge-aligned controller.}
\label{fig:ieee14-topology}
\end{figure}

\subsection{Operating point and finite-channel architecture}
The main transmission-grid application uses the IEEE 14-bus topology, but not a complete dynamic IEEE data set.  MATPOWER provides static power-flow data; a full engineering swing study would additionally require machine inertias, damping constants, exciters, governors, converter models, protection logic, and device limits.  We therefore retain the IEEE 14 branch topology and reactances, while specifying the dynamic and controller data explicitly.

Let $B\in\Lin(\R^{20},\R^{14})$ be the incidence operator of the IEEE 14 topology and let
\begin{equation}
        a_e
        =
        \frac{x_e^{-1}}{\operatorname{median}_{\ell}(x_\ell^{-1})}
\end{equation}
be the normalized line coefficient obtained from the branch reactance $x_e$.  The common nonlinear state is
\begin{equation}
        (\theta,\omega,u)\in\R^{14}\times\R^{14}\times\R^{20},
\end{equation}
and all five cases solve
\begin{subequations}\label{eq:ieee14-full-nonlinear-system}
\begin{align}
        \dot\theta
        &=\omega,\\
        \dot\omega
        &=p-1.2\omega-Bf(\theta,u),\\
        f(\theta,u)
        &=\operatorname{diag}(a)\sin(B^\top\theta+u),
\end{align}
\end{subequations}
so $M=I_{14}$ and $D=1.2I_{14}$.

The operating point is constructed so that its line flow is exactly cycle-free in the metric of the local sensitivity matrix.  Choose a small phase profile $\theta_*$, put
\begin{equation}
        d=B^\top\theta_*,
        \qquad
        \eta_*=\arctan d,
        \qquad
        u_*=\eta_*-d,
        \label{eq:ieee14-operating-point-construction}
\end{equation}
where the arctangent is componentwise, and define
\begin{equation}
        W_*=\operatorname{diag}\bigl(a_e\cos\eta_{*,e}\bigr)
        \in\Lin(\R^{20}).
\end{equation}
Then
\begin{equation}
        f_*=f(\theta_*,u_*)
        =W_*B^\top\theta_*,
        \qquad
        P_H^{W_*}f_*=0,
        \label{eq:ieee14-cycle-free-operating-flow}
\end{equation}
and the injection vector is fixed consistently by
\begin{equation}
        p=Bf_*.
\end{equation}
The initial state is
\begin{equation}
        \theta(0)=\theta_*,
        \qquad
        \omega(0)=0,
        \qquad
        u(0)=u_*+W_*^{-1}h_0,
        \label{eq:ieee14-nonlinear-initial-state}
\end{equation}
where
\begin{equation}
        h_0\in\Ker B,
        \qquad
        \norm{h_0}_{W_*^{-1}}=0.35.
\end{equation}
The difference
\begin{equation}
        d_0:=u(0)-u_*=W_*^{-1}h_0
        \label{eq:ieee14-initial-edge-phase-mismatch}
\end{equation}
should be read as a pre-existing edge-phase mismatch relative to the desired operating setting, not as actuator movement generated after $t=0$.  It may represent stale or unequal line-device settings, converter offsets, switching history, or another reduced line-level disturbance.  Since $\mathrm D_u f(\theta_*,u_*)=W_*$,
\begin{equation}
        f(\theta_*,u(0))-f_*
        =h_0+O(\norm{d_0}^2).
        \label{eq:ieee14-initial-harmonic-first-order}
\end{equation}
Thus the initial line-flow error is harmonic to first order.  The subsequent quantity
\begin{equation}
        \Delta u(t):=u(t)-u(0)
        \label{eq:ieee14-cumulative-actuator-movement}
\end{equation}
is the correction actually produced by the selected devices.

The five actuator laws are stated explicitly.  Passive swing damping uses
\begin{equation}
        \dot u=0.
\end{equation}
For the localized comparisons, use the fixed ordered architecture $R$ from \eqref{eq:ieee14-fixed-channel-list}.  Let
\begin{equation}
        S_R\in\Lin(\R^{20},\R^7)
\end{equation}
be coordinate restriction to these edges.  Its Euclidean adjoint
\begin{equation}
        A_R=S_R^*\in\Lin(\R^7,\R^{20})
\end{equation}
is the colocated coordinate injection and is represented in canonical coordinates by $S_R^\top$.  Put
\begin{equation}
        E_R=A_RS_R\in\Lin(\R^{20}),
\end{equation}
and use the same physical channels for selected-line damping and for the finite Hodge-aligned controller.  The selected-line and global phase controllers are
\begin{equation}
        \dot u
        =
        -\frac{\kappa}{\tau}W_*^{-1}E_Rf(\theta,u),
        \label{eq:ieee14-selected-line-nonlinear-law}
\end{equation}
\begin{equation}
        \dot u
        =
        -\frac{\kappa}{\tau}W_*^{-1}f(\theta,u),
        \label{eq:ieee14-global-nonlinear-law}
\end{equation}
while the metric-compatible ideal topological actuator is
\begin{equation}
        \dot u
        =
        -\frac{\kappa}{\tau}W_*^{-1}P_H^{W_*}f(\theta,u).
        \label{eq:ieee14-topological-nonlinear-law}
\end{equation}

Let $H_*\in\Lin(\R^7,\R^{20})$ have $W_*^{-1}$-orthonormal columns in the sense of \eqref{eq:section15-weighted-projectors}.  For the minimal finite-channel realization, set
\begin{equation}
        G_R:=S_RH_*\in\Lin(\R^7).
\end{equation}
For the fixed architecture \eqref{eq:ieee14-fixed-channel-list}, $G_R$ is invertible.  The canonical colocated gain is
\begin{equation}
        K_R
        =
        -\kappa(G_RG_R^\top)^{-1},
        \label{eq:ieee14-finite-hodge-gain}
\end{equation}
and the fifth controller is
\begin{equation}
        \tau\dot u
        =
        A_RK_RS_R\bigl(f(\theta,u)-f_*\bigr).
        \label{eq:ieee14-finite-hodge-nonlinear-law}
\end{equation}
This is the specialization of \eqref{eq:section15-finite-nonlinear-closed-loop} with $C=W_*A_R$.  Moreover,
\begin{equation}
        H_*^\top A_RK_RS_RH_*
        =
        -\kappa I_7,
        \label{eq:ieee14-finite-harmonic-compression}
\end{equation}
so the finite controller has exact weighted harmonic compression at the operating point, although its full edge-space action need not coincide with the ideal projector.

The fixed seven-edge architecture is illustrative.  By
Remark~\ref{rem:minimal-colocated-cotrees}, the full-rank minimal
colocated architectures are exactly the co-trees of the IEEE 14-bus
graph.  An exhaustive search may therefore enumerate spanning trees
$T$ and take
\[
        R=E\setminus T,
\]
rather than testing all
\[
        \binom{20}{7}=77\,520
\]
seven-edge subsets.  For the IEEE 14 topology used here, the
Matrix--Tree Theorem gives
\[
        N_{\mathrm{st}}(G)=3909,
\]
so only $3909$ admissible full-rank configurations need be compared.
For each such set $R$, the exact-compression gain is fixed by
\eqref{eq:ieee14-finite-hodge-gain}; with
\begin{equation}
        L_R:=W_*S_R^\top K_RS_R,
\end{equation}
one may compare, for example, the weighted worst-case distance
\begin{equation}
        d(R)
        :=
        \norm{
        W_*^{-1/2}
        \bigl(L_R+\kappa P_H^{W_*}\bigr)
        W_*^{1/2}
        }_2.
        \label{eq:ieee14-placement-benchmark}
\end{equation}
Because every admissible co-tree already has exact harmonic
compression, this distance measures only residual cut action and
cut--harmonic spillover.  Carrying out and engineering this placement
search is left to future work; full tangent stability, actuator effort,
thermal limits, and robustness remain additional design requirements.
No optimized placement is used in the experiment below.

\subsection{Nonlinear experiment and results}
The experiment uses
\begin{equation}
        \kappa=1,
        \qquad
        \tau=0.6,
        \qquad
        0\le t\le25.
\end{equation}
Equations~\eqref{eq:ieee14-full-nonlinear-system} together with the five actuator laws above are integrated numerically by an adaptive eighth-order Dormand--Prince method.  The state dimension is $14+14+20=48$.  Cycle energy and transfer error are measured in the $W_*^{-1}$ edge metric.

The experiment instantiates the general closed-loop theory of the preceding section: the useful transfer component is anchored by the cycle-free operating point $f_*$, while the initial perturbation is placed in the harmonic line-flow sector.  The global controller acts on the complete line-flow output and therefore disturbs transfer.  The ideal topological controller acts on the weighted harmonic output.  The rank-seven controller uses only the fixed channels in \eqref{eq:ieee14-fixed-channel-list}; it retains exact harmonic compression but may generate cut-space spillover because \eqref{eq:ieee14-finite-harmonic-compression} does not determine its complete edge-space action.

\begin{figure}[htbp]
\centering
\begin{subfigure}{0.48\textwidth}
\centering
\includegraphics[width=\textwidth]{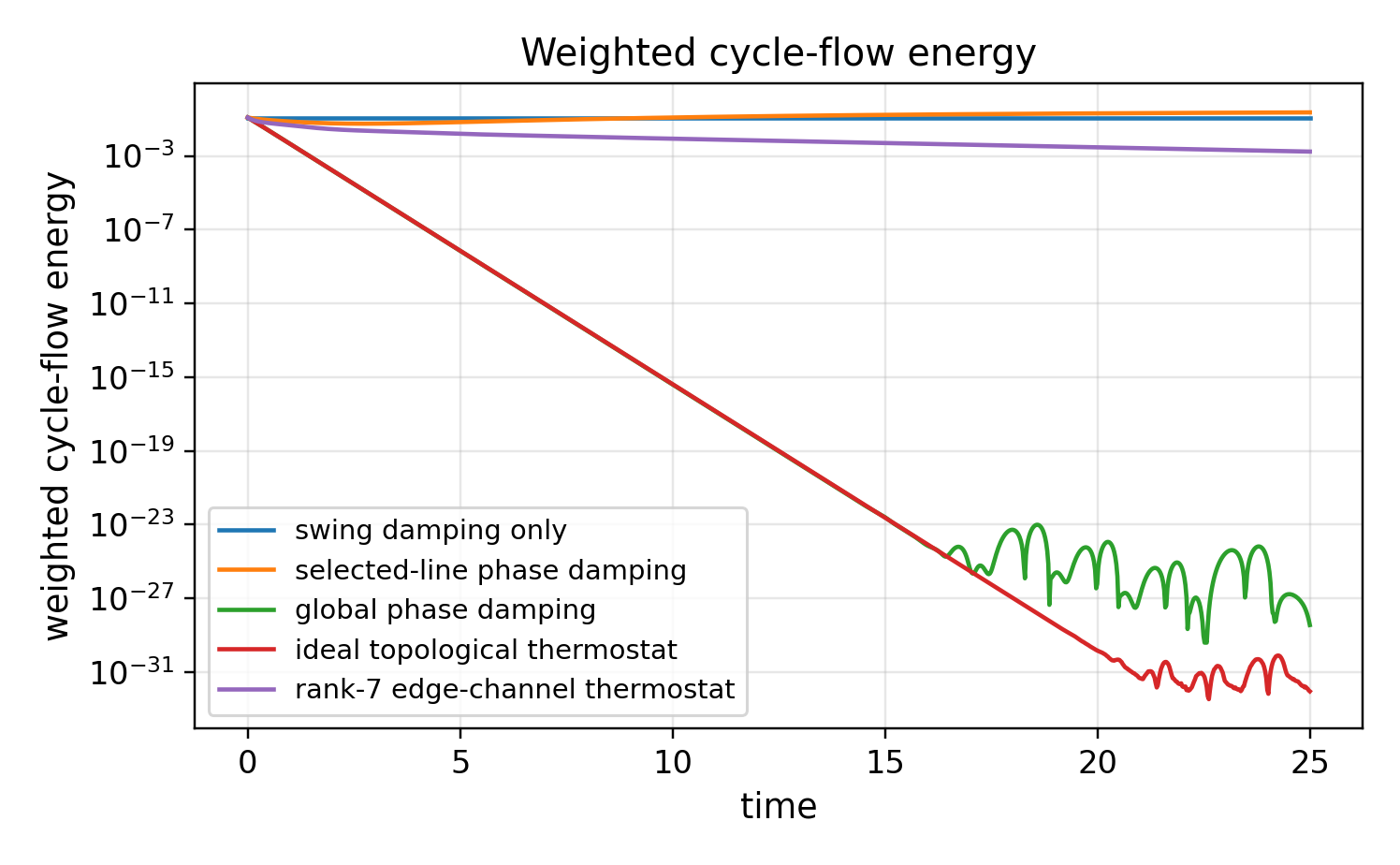}
\caption{Weighted cycle-flow energy.}
\end{subfigure}\hfill
\begin{subfigure}{0.48\textwidth}
\centering
\includegraphics[width=\textwidth]{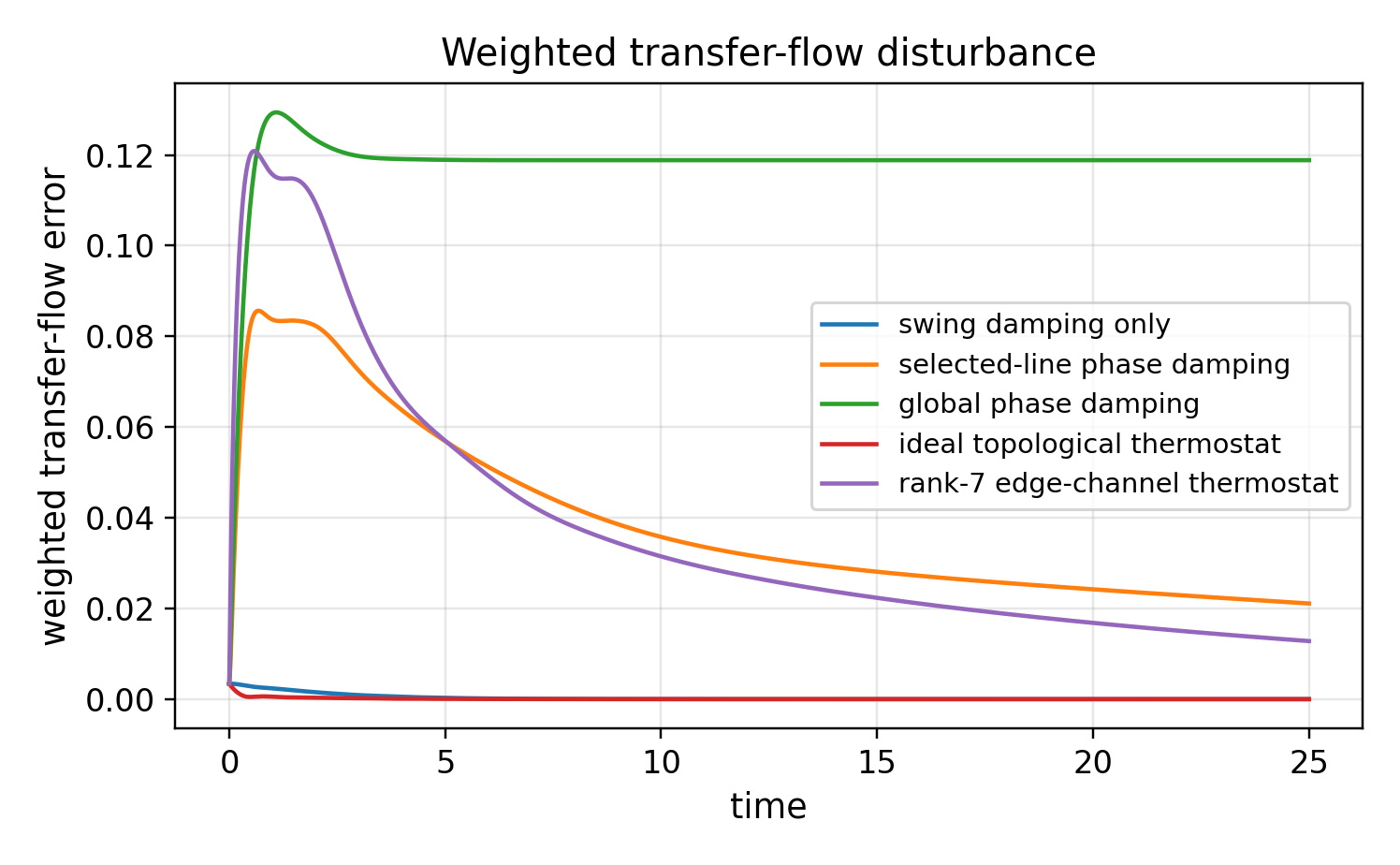}
\caption{Weighted transfer-flow error.}
\end{subfigure}
\caption{Synthetic nonlinear IEEE 14 swing experiment.  The ideal thermostat is Hodge-selective; the rank-seven realization removes most accumulated cycle energy but produces a transient transfer error through cut-space spillover.}
\label{fig:swing-ieee14-flow}
\end{figure}

\begin{figure}[htbp]
\centering
\begin{subfigure}{0.48\textwidth}
\centering
\includegraphics[width=\textwidth]{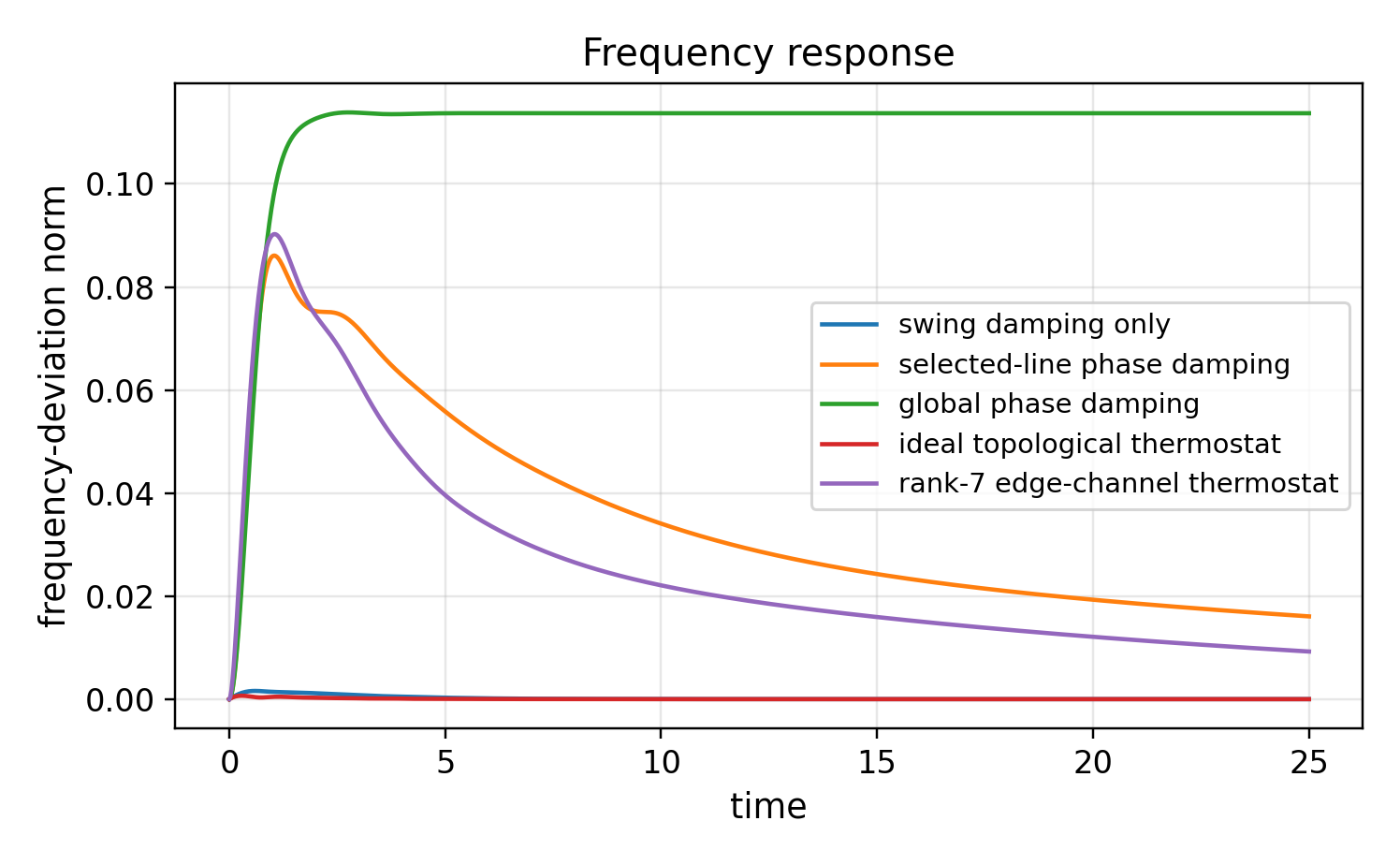}
\caption{Frequency-deviation norm.}
\end{subfigure}\hfill
\begin{subfigure}{0.48\textwidth}
\centering
\includegraphics[width=\textwidth]{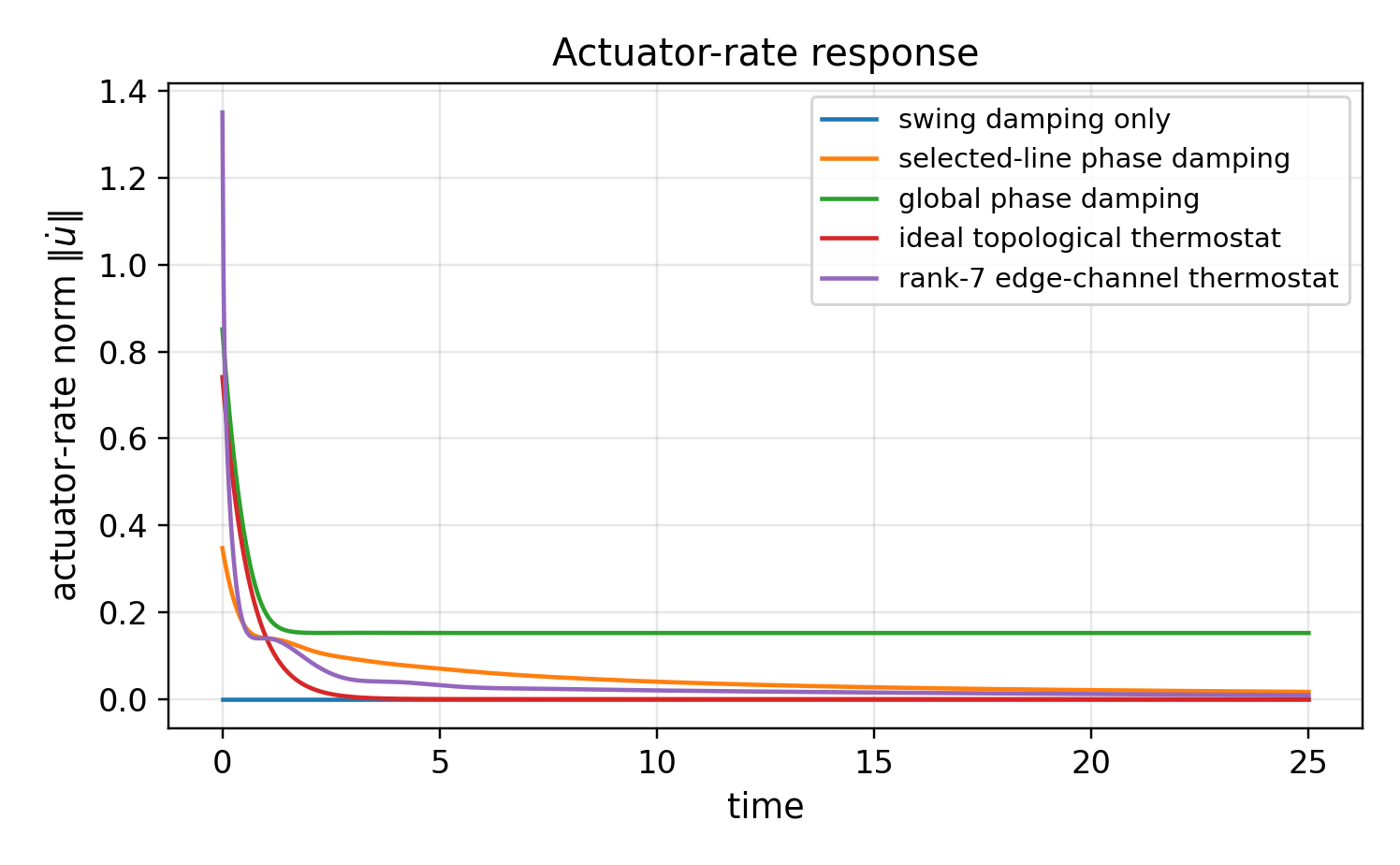}
\caption{Actuator-rate norm $\norm{\dot u}$.}
\end{subfigure}
\caption{Frequency excursion and actuator-rate norm in the synthetic IEEE 14 experiment.  Nonselective and localized controllers excite the electromechanical state because their full edge-space action is not confined to the harmonic sector.}
\label{fig:swing-ieee14-dynamics}
\end{figure}

\begin{table}[H]
\centering
\small
\resizebox{\textwidth}{!}{%
\begin{tabular}{lrrrr}
\toprule
Controller & Integrated cycle energy & Reduction vs. passive & Max transfer error & Max frequency norm\\
\midrule
Swing damping only & 2.990 & 0.0\% & 0.00342 & 0.00160\\
Selected-line phase damping & 3.548 & $-18.7$\% & 0.08565 & 0.08608\\
Global phase damping & 0.0366 & 98.8\% & 0.12938 & 0.11388\\
Ideal topological thermostat & 0.0365 & 98.8\% & 0.00342 & 0.00066\\
Rank-seven Hodge-aligned thermostat & 0.285 & 90.5\% & 0.12087 & 0.09026\\
\bottomrule
\end{tabular}%
}
\caption{Synthetic IEEE 14 metrics in the $W_*^{-1}$ edge metric.  The comparison distinguishes exact harmonic compression from exact ideal edge-space action.}
\label{tab:swing-ieee14-metrics}
\end{table}

\begin{remark}[Energy interpretation of the localized nonlinear experiment]
Corollary~\ref{cor:finite-channel-bregman-dissipation} applies directly to the rank-seven controller used above, with $S=S_R$, $S^*=A_R$, and $K=K_R$.  Its Bregman energy is therefore nonincreasing even though Figures~\ref{fig:swing-ieee14-flow} and~\ref{fig:swing-ieee14-dynamics} show substantial transfer and frequency transients.  There is no contradiction: the localized controller may exchange energy among the harmonic line-flow error, the cut-space error, and the bus kinetic energy while the combined target-centred storage decreases.  The numerical script verifies both the compression identity and the sign of the Bregman dissipation along the computed trajectory.
\end{remark}

\begin{remark}[Limiting actuator setting]
Recall from \eqref{eq:ieee14-cumulative-actuator-movement} that
\[
        \Delta u(t)=u(t)-u(0)
\]
is the cumulative actuator movement generated after the initial time.  For the rank-seven controller \eqref{eq:ieee14-finite-hodge-nonlinear-law}, one has $\dot u(t)\in\im A_R$ and hence $\Delta u(t)\in\im A_R$ for every $t$.  Define the selected-channel actuator movement by
\begin{equation}
        (\Delta u)_R(t)
        :=S_R\Delta u(t)
        \in\R^7.
        \label{eq:ieee14-selected-channel-movement}
\end{equation}
Since $S_RA_R=I_7$ and $A_RS_R$ is the coordinate projector onto $\im A_R$,
\begin{equation}
        \Delta u(t)=A_R(\Delta u)_R(t),
        \qquad
        u(t)=u(0)+A_R(\Delta u)_R(t).
        \label{eq:ieee14-actuator-coordinate-path}
\end{equation}
Applying $S_R$ to \eqref{eq:ieee14-finite-hodge-nonlinear-law} also gives
\begin{equation}
        \tau\frac{\dd}{\dd t}(\Delta u)_R
        =K_RS_R\bigl(f(\theta,u)-f_*\bigr),
        \qquad
        (\Delta u)_R(0)=0.
        \label{eq:ieee14-selected-channel-movement-ode}
\end{equation}

For every $t$,
\begin{equation}
        \eta(t)-\eta_*
        =B^\top\bigl(\theta(t)-\theta_*\bigr)
        +d_0+A_R(\Delta u)_R(t).
        \label{eq:ieee14-effective-phase-limit-identity}
\end{equation}
Multiplication by $H_*^\top$ eliminates the bus-phase term.  Since $G_R=S_RH_*$ and $H_*^\top A_R=G_R^\top$,
\begin{equation}
        H_*^\top\bigl(\eta(t)-\eta_*\bigr)
        =H_*^\top d_0+G_R^\top(\Delta u)_R(t).
        \label{eq:ieee14-selected-channel-limit-identity}
\end{equation}
Under the compact cohesive forward-invariance hypothesis of Corollary~\ref{cor:finite-channel-bregman-dissipation}, one has $f(\eta(t))\to f_*$.  Injectivity of the componentwise sine law on the cohesive branch then gives $\eta(t)\to\eta_*$.  Since $G_R$ is invertible,
\begin{equation}
        \boxed{
        (\Delta u)_{R,\infty}
        :=\lim_{t\to\infty}(\Delta u)_R(t)
        =-(G_R^\top)^{-1}H_*^\top d_0.}
        \label{eq:ieee14-limiting-actuator-coordinate}
\end{equation}
Consequently,
\begin{equation}
        \boxed{
        \Delta u_\infty
        :=\lim_{t\to\infty}\Delta u(t)
        =-A_R(G_R^\top)^{-1}H_*^\top
          \bigl(u(0)-u_*\bigr).}
        \label{eq:ieee14-limiting-actuator-setting}
\end{equation}
Thus the initial mismatch and the actuator architecture determine the final setting; $\kappa$ and $\tau$ determine the transient path and time scale, provided the same target is reached.  In the present experiment,
\begin{equation}
        \norm{\Delta u(25)}_2\approx0.6996,
        \qquad
        \norm{\Delta u_\infty}_2\approx0.8094.
\end{equation}
The continued growth of $\norm{\Delta u(t)}_2$ at the plotted horizon therefore represents convergence toward a finite nonzero setting, not instability.

Mathematically, $u_e$ is an ideal continuous branch phase-shift coordinate.  For a phase-shifting transformer, a nonzero $\Delta u_{\infty,e}$ corresponds qualitatively to a new tap position and phase-shift angle that is retained after the flow redistribution has settled.  A mechanical device would realize this motion through bounded discrete tap changes, deadbands, delays, and rate limits; the present model captures the constitutive and setpoint role of such a device, not its detailed dynamics \cite{ENTSOEPST,SiemensPST}.
\end{remark}

\section{Conclusions and outlook}
Passive Hodge dynamics damps positive spectral modes while leaving the harmonic kernel invariant.  The ideal topological thermostat converts this persistence into prescribed damping while regulating a cycle-free transfer target.  Finite-channel realizations separate a topology-only feasibility question from metric- and dynamics-dependent quality: the first Betti number fixes the minimal sensing and actuation ranks for full harmonic compression, but exact compression may coexist with cut action, cross-sector spillover, and non-normal transients.  The distributed Hodge projector therefore provides a canonical selectivity benchmark rather than a claim of practical realizability.

For nonlinear swing systems, linearizing the complete target-compatible closures at an angle-cohesive equilibrium shows that the ideal controller appends a stable harmonic block without changing the reduced passive nodal--cut spectrum.  The target-centred Bregman balance identifies the corresponding nonlinear mechanism: metric-compatible ideal feedback and canonical colocated feedback make the actuator port dissipative, although only the ideal law is perfectly Hodge-selective.  Harmonic compression, Bregman dissipation, and stability of the complete reduced closed loop are distinct requirements.

The examples progress from exact edge-flow models to a synthetic IEEE 14-bus experiment.  They show how useful transfer and internal circulation separate, how a winding-one state can carry divergence-free power flow, and how localization produces transfer and frequency transients even when harmonic compression is exact.  In the IEEE 14 setting, co-trees characterize the minimal full-rank colocated architectures; comparing them by conditioning, spillover, or distance from the ideal thermostat is a natural future placement problem.

Practical assessment requires explicit models of PST, FACTS, HVDC, or inverter devices, including lag, delay, saturation, rate limits, losses, measurement error, protection constraints, and topology changes.  Larger calibrated networks should combine harmonic-rank criteria with thermal, contingency, robustness, and cost objectives.  On the mathematical side, a natural continuation is feedback-perturbed Hodge heat in which the positive spectrum remains parabolically stable while the controlled harmonic compression undergoes bifurcation \cite{AmannHopf}.

\section*{Conflict of interest}
The author declares no conflict of interest.

\section*{Funding information}
No external funding was received for this work.

\section*{Data and code availability}
The companion source package contains the LaTeX source, all manuscript figures, and three Python scripts reproducing the numerical figures, tables, and verification checks.  Further implementation details are given in the accompanying README.

\section*{Declaration of generative AI and AI-assisted technologies}
During preparation of this manuscript, OpenAI ChatGPT was used for editorial, organizational, computational, and literature-search assistance.  The author verified and assumes full responsibility for all mathematical content, code, figures, and references.

\end{document}